\let\Algorithm\algorithm
\renewcommand\algorithm[1][]{\Algorithm[#1]\setstretch{1.6}}
\newtheorem{definition}{Definition}[section]
\newtheorem{assumption}{Assumption}[section]
\newtheorem{theorem}[definition]{Theorem}
\newtheorem{proposition}[definition]{Proposition}
\newtheorem{lemma}[definition]{Lemma}
\newtheorem{corollary}[definition]{Corollary}
\newtheorem{remark}[definition]{Remark}
\newtheorem{exmp}{Example}[section]
\newcommand{\N}{\mathbbm{N}}
\newcommand{\medbias}{\mathrm{Med}\mbox{-}\mathrm{bias}}
\newcommand{\unif}{\text{U}}
\DeclareMathOperator*{\argmin}{arg\,min}
\DeclareMathOperator*{\argmaxp}{arg\,max^+}
\DeclareMathOperator*{\argminp}{arg\,min^+}
\newcommand{\sign}{\mathrm{sgn}}
\newcommand{\al}{\alpha}
\newcommand{\E}{\mathbbm{E}}
\newcommand{\eps}{\xi}
\newcommand{\Xo}[1]{X_{#1:n}}
\newcommand{\Yo}[1]{Y_{[#1:n]}}
\newcommand{\xio}[1]{\xi_{[#1:n]}}
\newcommand{\slLCM}{\mathbf{slLCM}}
\newcommand{\slGCM}{\mathbf{slGCM}}
\def\R{\mathbb{R}}
\def\Pr{\mathbb{P}}
\def\E{\mathbb{E}}
\def\R{\mathbb{R}}
\def\Pr{\mathbb{P}}
\def\E{\mathbb{E}}
\begin{document}
\begin{frontmatter}
\title{Generalized Asymptotic Limit Theory and Inference for Isotonic Regression}
\author[upenn]{Soham Mallick\corref{cor1}}
\author[cmu]{Siddhaarth Sarkar}
\author[cmu]{Arun Kumar Kuchibhotla}
\cortext[cor1]{Corresponding author. Email: kcillam@wharton.upenn.edu}
\affiliation[upenn]{op={}, organization={Department of Statistics \& Data Science, University of Pennsylvania}}
\affiliation[cmu]{op={}, organization={Department of Statistics \& Data Science, Carnegie Mellon University}}
\begin{abstract}
Monotonicity is a natural shape constraint in nonparametric regression problems, arising for instance when predicting factory yield as a monotone function of labor hours. The widely used isotonic least squares estimator (LSE) does not require any tuning parameters and its rate of convergence and pointwise limiting distribution are well studied, assuming a specific local shape for the true monotone function. We introduce a general condition on the local behavior of this true function, uncovering a far richer family of asymptotic distributions than previously known. Valid inference in the classical framework has remained challenging due to the need to estimate nuisance parameters, and no existing methods address inference in our broader setup. We resolve this by showing the symmetry of these new limiting distributions, which allows the HulC procedure of Kuchibhotla, Balakrishnan, and Wasserman (2024) to produce asymptotically valid confidence intervals. More generally, our framework enables inference that remains uniformly valid over a suitably regular class of true functions.
\end{abstract}
\end{frontmatter}
\section{Introduction}
Monotonicity is a common and useful shape constraint in nonparametric regression. Suppose we have independent and identically distributed random vectors $(X_i, Y_i)\in\mathbb{R}^2, 1\le i\le n$ with the joint distribution $P_n$ such that $f_{0,n}(x) = \mathbb{E}[Y_i|X_i = x]$ is a non-decreasing function of $x$. We index the joint distribution and the conditional expectation with $n$ to indicate that they can change with the sample size.  The goal of isotonic regression is to estimate this monotone conditional mean function without imposing any parametric form. Using $f_{0,n}(\cdot)$, define the ``errors'' $\xi_i = Y_i - f_{0,n}(X_i), 1\le i\le n$. Clearly, $\mathbb{E}[\xi_i|X_i] = 0$ for all $1\le i\le n$. We do not assume any special properties on the distribution of $(\xi_i, X_i)$ such as independence, but might require boundedness of certain (conditional) moments.
There exist several nonparametric estimators for $f_{0,n}(\cdot)$, and arguably, the most natural estimator is the nonparametric Least Squares Estimator (LSE) defined as
\[
\widehat{f}_n := \argmin_{f\in\mathcal{M}}\,\frac{1}{n}\sum_{i=1}^n (Y_i - f(X_i))^2,
\]
where $\mathcal{M}$ is the class of all non-decreasing functions from $\mathbb{R}$ to $\mathbb{R}$. Although defined as an infinite-dimensional optimization problem, the estimator can be obtained via a finite-dimensional convex optimization problem because the objective function only depends on $f$ at covariate values $X_1, \ldots, X_n$. Let $\Xo{1} \le \Xo{2} \le \cdots \le \Xo{n}$ denote the order statistics of $X_1, \ldots, X_n$, with ties broken randomly, and let $\Yo{1}, \ldots, \Yo{n}$ be the corresponding responses. Finally, let $\xio{1}, \ldots, \xio{n}$ denote the corresponding errors. This means that
\[
\{(X_i, Y_i, \xi_i), 1\le i\le n\} ~=~ \{(\Xo{i}, \Yo{i}, \xio{i}), 1\le i\le n\}.
\]
With this notation, the nonparametric LSE $\widehat{f}_n$ can be obtained via the finite-dimensional problem
\begin{equation}\label{eq:isotonic-finite-dimensional}
\widehat{\theta}_n = \argmin_{\substack{\theta\in\mathbb{R}^n:\\\theta_1 \le \theta_2 \le \cdots \le \theta_n}}\,\frac{1}{n}\sum_{i=1}^n (\Yo{i} - \theta_i)^2.
\end{equation}
Then $\widehat{f}_n(\cdot)$ can be taken to be any monotone function that satisfies $\widehat{f}_n(\Xo{i}) = \widehat{\theta}_{n,i}$. A traditional choice for $\widehat{f}_n(\cdot)$ is a left-continuous, piecewise constant monotone function and satisfies a closed-form min-max (and max-min) formula:
\begin{equation}\label{eq:min-max-formula}
\begin{split}
\widehat{f}_n(x) &= \max_{i:\Xo{i}\leq x}\min_{j:\Xo{j}\geq x}\dfrac{1}{j-i+1}\sum_{k=i}^j \Yo{k},\\
&=\min_{i:\Xo{i}\leq x}\max_{j:\Xo{j}\geq x}\dfrac{1}{j-i+1}\sum_{k=i}^j \Yo{k}.
\end{split}
\end{equation}
The finite-dimensional problem~\eqref{eq:isotonic-finite-dimensional} can be solved with a computational complexity of $O(n)$ using the well-known pool adjacent violators algorithm (PAVA). See \cite{ayer1955empirical}, \cite{miles1959complete}, \cite{kruskal1964nonmetric} and \cite{best1990active} for a proof of the min-max formulation and discussions on the PAVA algorithm.
Coming to the asymptotic properties of isotonic regression estimator $\widehat{f}_n$, we note that it is one of the most studied shape-constrained non-parametric estimators. Define $\|g\| = (\int_{\mathbb{R}} g^2(x)dx)^{1/2}$ for all functions $g:\mathbb{R}\to\mathbb{R}$. It is known that, under certain regularity conditions, $\|\widehat{f}_n - f_{0,n}\| = O_p(n^{-1/3})$ no matter the smoothness of $f_{0,n}(\cdot)$~\citep{balabdaoui2019least}. Moreover, if $f_{0,n}$ is a ``simple'' function in that it is a piecewise constant monotone function with $k$-pieces, then $\|\widehat{f}_n - f_{0,n}\| = \tilde{O}_p(k/n)$~\citep{zhang2002risk,han2018robustness}.\footnote{We use the notation $\tilde{O}(\cdot)$ to ignore poly-log factors.} See~\cite{guntuboyina2018nonparametric} for a detailed review of these rates and adaptivity properties. More precise results about the pointwise behavior of $\widehat{f}_n(\cdot)$ are also known. \cite{wright1981asymptotic} is a primary reference in this context. Under some regularity conditions on the joint distribution $P_n$, if $|f_{0,n}(x) - f_{0,n}(x_0)| = A|x - x_0|^{\beta}(1 + o(1))$ as $x\to x_0$ for some $\beta > 0$, then~\cite{wright1981asymptotic} shows that
\begin{equation}\label{eq:pointwise-asym-result}
n^{\beta/(2\beta + 1)}(\widehat{f}_n(x_0) - f_{0,n}(x_0)) ~\overset{d}{\to}~ \left(\frac{\sigma^{2\beta}(x_0)A}{(\beta + 1)h^{\beta}(x_0)}\right)^{1/(2\beta + 1)}\mathbb{C}_{\beta},
\end{equation}
where $\sigma^2(x) = \mathbb{E}[\xi_i^2|X_i = x]$, $h(\cdot)$ is the Lebesgue density of the covariate distribution, and $\mathbb{C}_{\beta}$ is the slope from the left at zero of the greatest convex minorant of $t\mapsto B(t) + |t|^{\beta + 1}$ with $B(\cdot)$ representing the two-sided Brownian motion. Additionally, under the same set of assumptions, \cite{wright1984asymptotic} showed that the non-degenerate limit of the quantile isotonic regression estimator (with appropriate scaling) is also $\mathbb{C}_\beta$, suggesting that our proposed results can be extended beyond the least squares estimator. Similar extensions for $M$-estimation based monotone regression estimators are obtained in~\cite{alvarez2012m}.
It should be stressed here that, unlike the rate of convergence results for the global $L_2$-norm, the limiting distribution result~\eqref{eq:pointwise-asym-result} is derived for the fixed distribution setting (i.e., $P_n = P_0$ and $f_{0,n} = f_0$ for all $n\ge1$). In particular, it is not obvious what the limiting distribution would be if $f_{0,n}(x) = x/n^{1/5} + x^3/6, ~x\ge 0$, for example. Moreover, the proof of Wright's result is written with an implicit assumption that $\xi_i/\sigma(X_i)$ is independent of $X_i$ for all $1\le i\le n$, which we believe is a restrictive assumption; this assumption was made explicitly in~\cite{leurgans1982asymptotic}.

One of the primary goals of this paper is to extend~\eqref{eq:pointwise-asym-result} to the triangular array setting under weaker assumptions on the local behavior of $f_{0,n}(\cdot)$.
 Finally, inference in isotonic regression is a non-trivial problem. Firstly, the adaptive behavior of the LSE $\widehat{f}_n(\cdot)$ (adapting to the local ``flatness'' parameter $\beta$) implies that the rate of convergence of the estimator is, in general, unknown. Secondly, even if $\beta$ is assumed known, the limiting distribution involves two additional nuisance functions, namely the conditional variance $\sigma(\cdot)$ and the covariate density $h(\cdot)$, estimation of which involves tuning parameters and requires more assumptions on the joint distribution of the data. It is well-known that bootstrap is inconsistent for valid inference~\citep{guntuboyina2018nonparametric} and subsampling is not readily applicable for this problem as the rate of convergence is unknown. Preliminary simulations in~\cite{kuchibhotla2021hulc} show that subsampling with an estimated rate of convergence~\citep{bertail1999subsampling} has unreliable performance for finite samples, but HulC maintains good coverage even for smaller sample sizes of order $50$ for almost all ranges of $\beta > 0$. HulC~\citep{kuchibhotla2021hulc} relies on asymptotic median unbiasedness of the estimator $\widehat{f}_n(x_0)$, which was assumed in that paper. In the current paper, we prove this property and in fact, prove a stronger claim that $\mathbb{C}_{\beta}$ is a symmetric distribution for all $\beta > 0$. 
 
 Subsampling and HulC are two generic methods for inference, not tailored to monotone regression. Several attempts exist in the literature that take advantage of the structure of the monotone regression estimator to perform inference. Two prominent works in this regard are~\cite{deng2021confidence} and~\cite{cattaneo2023bootstrap}. The underlying assumptions on $f_{0,n}$ in both papers are stronger than those in~\cite{wright1981asymptotic}. \cite{deng2021confidence} propose a pivotal statistics $\widehat{r}_n(\widehat{f}_n(x_0) - f_{0,n}(x_0))$ with a data-dependent quantity $\widehat{r}_n$ and construct confidence intervals using a conservative quantile of the limiting distribution. However, they require independence of $\xi_i$ and $X_i$ as well as homoscedasticity for asymptotically valid inference. \cite{cattaneo2023bootstrap} propose a modified bootstrap to construct asymptotically valid confidence intervals without homoscedasticity or the independence assumption, but require a known upper bound on $\beta$. Another inference strategy is presented in \cite{banerjee2001likelihood} and \cite{banerjee2007likelihood}, which is based on the likelihood ratio. Although this method provides a tuning parameter-free inference strategy, it is only applicable when the first derivative is non-zero ($f'_{0,n}(x_0) >0$). A generalization of this result to higher order smoothness is provided in \cite{deng2020isotonic} but it still requires the knowledge of the number of vanishing derivatives.
\paragraph{Main contributions.}
There are two main contributions of this paper: (1) prove new limiting distribution results for $\widehat{f}_n(x_0)$ under a triangular array setting that allows $f_{0,n}(\cdot)$ to change with sample size $n$ and without assuming independence of errors and covariates; and (2) provide an asymptotically uniformly valid confidence interval for all ranges of local flatness parameter $\beta$, assuming it is finite.
Towards the first goal, we prove new pointwise asymptotic limits of the isotonic regression estimator by replacing Wright's assumption with $|f_{0,n}(x_n) - f_{0,n}(x_0)| = A|x_n - x_0|^{\beta}(1 + o(1))$ for some sequence $x_n$ converging to $x_0$ as $n\to\infty$. In fact, this is a very special case of our main result, which only assumes the existence of a diverging sequence $s_n$ such that $\sqrt{n/s_n}(f_{0,n}(x_0 + c/s_n) - f_{0,n}(x_0))$ converges to some $\psi(c)$ as $n\to\infty$. Under this assumption, we obtain the limiting distribution as the slope from the left at zero of the greatest convex minorant of a drifted two-sided Brownian motion with a specific convex function as the drift. More surprisingly, our results imply that there always exists a sequence of data-generating processes such that any non-negative convex function that is zero at zero can be obtained as the drift. To the best of our knowledge, such richness in the limiting distribution theory of isotonic regression is unexplored. In a way, this rich distribution theory is similar to that of sample quantiles and quantile regression, as showcased in~\cite{knight1998limiting,knight2002limiting}. Towards the second goal of inference, we prove symmetry of the limiting distribution under a wide range of joint distributions which implies asymptotic uniform validity of HulC under the setting of~\cite{wright1981asymptotic}, no matter what $\beta \in (0, \infty)$ is. The proposed inference, hence, is adaptive in the sense that it does not require the knowledge of $\beta\in(0, \infty)$.
\paragraph{Necessity of Triangular Arrays.} The goal of extending the existing asymptotic limit theory to triangular array setting is a necessary ingredient to develop asymptotically uniformly valid inference. To briefly illustrate this point, for each non-decreasing function $f$, suppose $\mathcal{P}(f)$ is a collection of distributions of $(X, Y)$ with $\mathbb{E}[Y|X = x] = f(x)$. Let $\mathcal{F}$ be a subset of the collection of all non-decreasing functions. For $q> 0$, a point $x_0$ in the support of $X$ and $n\ge1$, there exists $f_n\in\mathcal{F}$ and $P_n\in\mathcal{P}(f_n)$ such that
\[
\sup_{f\in\mathcal{F}}\sup_{P\in\mathcal{P}(f)}\,\mathbb{P}_{f,P}\left(|\widehat{f}_n(x_0) - f_0(x_0)| \le q\right) \le (1 + 1/n)\mathbb{P}_{f_n, P_n}\left(|\widehat{f}_n(x_0) - f_0(x_0)| \le q\right).
\]
Hence, for the left hand side to be less than $\alpha$ (asymptotically), one requires the probability under $P_n\in \mathcal{P}(f_n)$ to be less than $\alpha$ (asymptotically). The behavior of the probability on the right hand side {\em cannot} be understood from existing asymptotic limit theory for monotone regression and our triangular array framework is needed for uniformly valid inference (over a broad class of functions and distributions).
\paragraph{Organization.} The remaining article is organized as follows. In Section~\ref{sec:new-results}, we prove new asymptotic limit theorems for the isotonic estimator. As a way to illustrate our proof for dealing with errors without assuming independence from covariates, we also present a simple result about the boundedness of the LSE. In Section~\ref{sec:inference-HulC}, we provide a general result on the continuity and symmetry of random variables, defined as slope from left of greatest convex minorants of a drifted Brownian motion. Then we describe HulC and prove its asymptotic validity along with a finite sample bound on the miscoverage for a (large) subclass of monotone regression problems. In Section~\ref{sec:simulation}, we present some numerical illustrations verifying our theoretical claims and also provide a comparison of HulC's performance with that of existing methods of subsampling~\citep{bertail1999subsampling},~\cite{cattaneo2023bootstrap}, and~\cite{deng2021confidence}. Finally, we summarize the paper along with a discussion of potential future directions in Section~\ref{sec:discussions}. All proofs and additional simulations of interest are relegated to the appendices.\footnote{Code for all the simulations in the paper are available on \url{https://github.com/Arun-Kuchibhotla/HulC}}
\section{Results for Isotonic Regression}\label{sec:new-results}
\subsection{Boundedness of the LSE}
The first new result we prove for isotonic regression is the uniform boundedness of the estimator in probability. Although not directly related to the pointwise asymptotic distribution, this simple result helps introduce techniques related to concomitants, which are necessary to allow for arbitrary dependence between errors and covariates. Proposition~\ref{prop:uniform-boundedness} formally states the uniform boundedness result and does not require the monotonicity of the true conditional mean function.
\begin{proposition}\label{prop:uniform-boundedness}
Suppose $(X_i, Y_i)\in\mathbb{R}^d, 1\le i\le n$ are independent and identically distributed as $(X, Y)$. Suppose $x\mapsto \mathbb{P}(X \le x)$ is a continuous function. Set $\mu(x) = \mathbb{E}[Y|X = x]$, and $\xi = Y - \mu(X)$. Assume that $\eta(x) = (\mathbb{E}[|\xi|^{p}|X = x])^{1/p} < \infty$ for some $1 < p \le 2$. Let $\widehat{f}_n(\cdot)$ be the isotonic regression estimator given in~\eqref{eq:min-max-formula}. Then for all $n\ge1$,
\begin{equation}\label{eq:moment-bound-supremum-norm}
\left(\mathbb{E}[\|\widehat{f}_n\|_{\infty}^p]\right)^{1/p} \le \|\mu\|_{\infty} + C_p\|\eta\|_{\infty},
\end{equation}
where $C_p := {2^{2 + 1/p}}/{(2^{p-1} - 1)^{1/p}}$.
\end{proposition}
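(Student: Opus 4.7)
The plan is to reduce the $L^p$ bound on $\|\widehat{f}_n\|_\infty$ to a classical maximal inequality for averages of conditionally independent mean-zero random variables, by exploiting the concomitant representation of the isotonic LSE. First I would use monotonicity of $\widehat{f}_n$ together with the max-min/min-max formulas in~\eqref{eq:min-max-formula}, evaluated at $x = X_{n:n}$ and $x = X_{1:n}$ respectively, to identify
\[
\sup_x \widehat{f}_n(x) = \max_{1 \le i \le n}\frac{1}{n-i+1}\sum_{k=i}^n Y_{[k:n]}, \qquad \inf_x \widehat{f}_n(x) = \min_{1 \le j \le n}\frac{1}{j}\sum_{k=1}^j Y_{[k:n]},
\]
so that $\|\widehat{f}_n\|_\infty$ is dominated by the maximum of the absolute values of these two quantities. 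Decomposing $Y_{[k:n]} = \mu(X_{k:n}) + \xi_{[k:n]}$ and pulling out the deterministic contribution via $|\mu(\cdot)| \le \|\mu\|_\infty$, the problem reduces to bounding
\[
U_n := \max_{1 \le j \le n}\left|\frac{1}{j}\sum_{k=1}^j \xi_{[k:n]}\right|, \quad V_n := \max_{1 \le i \le n}\left|\frac{1}{n-i+1}\sum_{k=i}^n \xi_{[k:n]}\right|,
\]
in $L^p$ by a constant multiple of $\|\eta\|_\infty$.

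Next I would exploit the essential structural fact about concomitants of an i.i.d.\ sample with a continuous marginal of $X$: conditionally on the vector of order statistics $(X_{1:n}, \ldots, X_{n:n})$, the concomitants $\xi_{[1:n]}, \ldots, \xi_{[n:n]}$ are independent with $\xi_{[k:n]}$ having the conditional law of $\xi$ given $X = X_{k:n}$. This follows by first conditioning on the unordered sample $(X_1, \ldots, X_n)$ and the almost surely unique permutation $\pi$ that orders it, and then using conditional independence of the $\xi_i$'s given the $X_i$'s. In particular, conditionally on the order statistics the concomitants are mean-zero and satisfy $\mathbb{E}[|\xi_{[k:n]}|^p \mid X_{1:n}, \ldots, X_{n:n}] = \eta(X_{k:n})^p \le \|\eta\|_\infty^p$, so the problem reduces to a maximal inequality applied conditionally and then integrated out.

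Finally, I would establish the core maximal inequality: for independent mean-zero random variables $Z_1, \ldots, Z_n$ with $\mathbb{E}|Z_k|^p \le M^p$ and $p \in (1, 2]$,
\[
\mathbb{E}\left[\max_{1 \le j \le n}\left|\frac{1}{j}\sum_{k=1}^j Z_k\right|^p\right]^{1/p} \le C_p\,M,
\]
via a dyadic decomposition of the index range $j \in [2^{m-1}, 2^m)$ (which replaces $1/j$ by a quantity comparable to $2^{-m}$), combined with the von Bahr--Esseen inequality $\mathbb{E}|S_N|^p \le 2\sum_{k \le N}\mathbb{E}|Z_k|^p$ applied within each dyadic block, and summation of the resulting geometric series $\sum_m 2^{-m(p-1)} = 1/(2^{p-1}-1)$. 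Applying the same bound to the reversed sequence handles $V_n$; integrating over the order statistics and invoking Minkowski's inequality then delivers~\eqref{eq:moment-bound-supremum-norm}. The main obstacle I expect is executing the dyadic decomposition carefully enough that the constant comes out exactly as $2^{2+1/p}/(2^{p-1}-1)^{1/p}$ rather than picking up an additional factor of $p/(p-1)$ that would arise from a naive application of Doob's $L^p$ maximal inequality (which blows up as $p \downarrow 1$), and in articulating the conditioning step precisely enough that the conditional $L^p$ bound integrates cleanly against $\|\eta\|_\infty$.
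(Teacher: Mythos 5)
Your proposal is correct and follows essentially the same route as the paper's proof: reduction of $\|\widehat{f}_n\|_{\infty}$ to the two extreme running averages via the min-max formula, conditional independence of the concomitants $\xi_{[k:n]}$ given the order statistics (Lemma 1 of Bhattacharya, 1974), and a dyadic peeling of the index combined with the von Bahr--Esseen inequality and a geometric series to produce $C_p$. The one point you flag as a potential obstacle---avoiding the extra $(p/(p-1))$ factor when passing from the maximum over a dyadic block to its right endpoint---is indeed the delicate step; the paper invokes Doob's maximal inequality there without writing that constant, so your instinct to handle this passage by the block decomposition itself is, if anything, the more careful reading.
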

Note that~\eqref{eq:moment-bound-supremum-norm} implies that $\|\widehat{f}_n\|_{\infty} = O_p(1)$ as $n\to\infty$.
A complete proof of Proposition~\ref{prop:uniform-boundedness} is given in~\ref{appsec:proof-of-uniform-boundedness}. The proof is based on a peeling technique and Lemma 1 of~\cite{bhattacharya1974convergence}.
Proposition~\ref{prop:uniform-boundedness} extends Lemma 9 of~\cite{han2018robustness} in two ways: (1) we do not require the independence between $(\xi_1, \ldots, \xi_n)$ and $(X_1, \ldots, X_n)$; and (2) we do not require the finiteness of second moments of errors. From our proof in~\ref{appsec:proof-of-uniform-boundedness}, one could potentially weaken the assumption of uniform boundedness of $\eta(\cdot)$ to a finite moment condition.
\subsection{Asymptotic Distribution of the LSE}
In this section, we present a general result stating the limiting distribution of isotonic LSE under an assumption on the local behavior of $f_{0,n}$.
Fix $x_0\in\mathbb{R}$ in the interior of the support of the covariate distribution.  \label{subsec:assumptions}
\begin{enumerate}[label=(A\arabic*)]
\item $(X_i, Y_i), 1\le i\le n$ are independent and identically distributed random vectors satisfying $Y_i = f_{0,n}(X_i) + \xi_i$ for a monotone non-decreasing function $f_{0,n}(\cdot)$ and errors $\xi_i$'s that satisfy $\mathbb{E}[\xi_i|X_i] = 0$ for all $1\le i\le n$ and $\mathbb{E}[\xi_i^2|X_i] = \sigma^2_n(X_i) \le \overline{\sigma}^2$ (for all $n\ge1$). Moreover, $\sigma^2_n(\cdot)$ is a continuous function in a neighborhood of $x_0$ and $\sigma^2_n(x_0)$ has a limit as $n\to\infty$, i.e., there exists a neighborhood $\mathcal{S}(x_0)$ (independent of $n$) such that\label{assump:data-model-assumption}
\[
\limsup_{n\to\infty}\sup_{x\in\mathcal{S}(x_0):|x - x_0| \le \delta_n}|\sigma^2_n(x) - \sigma^2_n(x_0)| = 0,\quad\mbox{and}\quad \lim_{n\to\infty}\sigma^2_n(x_0) = \sigma_0^2 \in (0, \infty),
\]
for any sequence $\{\delta_n\}_{n\ge1}$ such that $\delta_n\to0$ as $n\to\infty$.
\item The cumulative distribution function, $H_n$, of $X_i$'s is continuous on $\mathbb{R}$ and is continuously differentiable in a neighborhood of $x_0$ with $H'_n(x_0) = h_n(x_0)>0$. Formally, there exists a neighborhood
 $\mathcal{N}(x_0)$ (independent of $n$) such that \label{assump:continuous-distribution-of-covariates}
 \[
 \limsup_{n\to\infty}\sup_{x\in\mathcal{N}(x_0):\,|x - x_0| \le \delta_n}|h_n(x) - h_n(x_0)| = 0,
 \]
 \[
 \liminf_{n\to\infty}\inf_{x\in\mathcal{N}(x_0)}h_n(x) > 0,
 \]
 for any sequence $\{\delta_n\}_{n\ge1}$ such that $\delta_n\to0$ as $n\to\infty$. Moreover, $\lim_{n\to\infty} h_n(x_0)$ exists and equals, say, $h_0 \in (0, \infty)$.
\item \label{assump:continuity-of-f_{0,n}}There exists a sequence $\{s_n\}_{n\geq 1}$ satisfying $s_n\to\infty$ and $s_n/n \to 0$ such that for any sequence $\{c_n\}_{n\ge1}$ with $c_n\to c\in\mathbb{R}\setminus\{0\}$,
\begin{equation}\label{eq:assumption-psi-limit}
\psi(c):=\displaystyle\lim_{n\to\infty}\sqrt{\frac{n}{s_n}}\left(f_{0,n}\left(x_0+\frac{c_n}{s_n}\right)-f_{0,n}(x_0)\right),
\end{equation}
exists for all $c\in\mathbb{R}\setminus\{0\}$ such that
\[
\limsup_{|c|\to\infty} |c(\psi(3c/2) - \psi(c))| = \infty
\]
and $|\psi|$ is not identically equal to $\infty$ on $\mathbb{R}\backslash\{0\}$. (Define $\psi(0) = 0$.)
\end{enumerate}
Define the function
\begin{equation}\label{eq:definition-Psi}
\Psi(t) = \mathbf{1}\{t \ge 0\}\int_0^t \psi(s)ds - \mathbf{1}\{t < 0\}\int_{t}^{0} \psi(s)ds, \quad\mbox{for all}\quad t\in\mathbb{R}.
\end{equation}
\begin{theorem}\label{thm:asymptotic-distribution-of-LSE}
For any $x_0$ in the interior of the support of the covariate distribution, under assumptions~\ref{assump:data-model-assumption},~\ref{assump:continuous-distribution-of-covariates}, and~\ref{assump:continuity-of-f_{0,n}}, we have
\[
\sqrt{\frac{n}{s_n}}\frac{h_0}{\sigma^2_0}(\hat{f}_n(x_0)-f_{0,n}(x_0))~\overset{d}{\to}~\mathbf{slGCM}[\mathcal{B}_0(t):t\in\mathbb{R}](0),
\]
where $\mathcal{B}_0(t) := B(t) + \Psi(h_0t/\sigma_0^2)$ with $B(t)$ being the two-sided Wiener--L{\'e}vy process on $\mathbb{R}$. Here, for any function $g(\cdot)$, $\mathbf{slGCM}[g(t):\,t\in \mathcal{I}](0)$ represents the slope from the left at zero of the greatest convex minorant of $t\mapsto g(t)$ on $\mathcal{I}$.
\end{theorem}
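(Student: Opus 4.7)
The plan is to follow the classical Brunk--Prakasa Rao--Wright route: express the centered estimator as the left-slope at a specific point of the greatest convex minorant of a cumulative sum process, localize and rescale this process near $x_0$, show weak convergence of the rescaled process to $\mathcal{B}_0$, and conclude by a continuous mapping argument for the slope-of-GCM functional. The novelty, and the bulk of the technical work, lies in carrying this through under the triangular-array assumption~\ref{assump:continuity-of-f_{0,n}} and \emph{without} independence of $\xi_i$ and $X_i$; the latter is handled via the concomitants $\xio{k}$ in the style of Proposition~\ref{prop:uniform-boundedness}.

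\textbf{Step 1: switching/GCM representation.} Since $\widehat{\theta}_{n,i}$ is the left-slope at $i$ of the greatest convex minorant of $i \mapsto \sum_{k=1}^{i} \Yo{k}$, and since subtracting the affine function $i \mapsto i\, f_{0,n}(x_0)$ from the cumulative sum preserves left-slopes of its GCM, the centered quantity $\widehat{f}_n(x_0) - f_{0,n}(x_0)$ equals the left-slope at the largest index $i_n$ with $\Xo{i_n} \le x_0$ of the centered cumulative sum process. Linearly interpolating and rescaling the horizontal axis by $s_n$ around $x_0$ and the vertical axis by $\sqrt{n/s_n}\,(h_0/\sigma_0^2)$ produces a continuous process $W_n(t)$ on $\mathbb{R}$ whose left-slope at $0$ is exactly $\sqrt{n/s_n}\,(h_0/\sigma_0^2)\,(\widehat{f}_n(x_0) - f_{0,n}(x_0))$.

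\textbf{Step 2: limit of the local process.} Decompose $W_n = V_n + D_n$, with $V_n$ built from the concomitants $\xio{k}$ and $D_n$ from $f_{0,n}(\Xo{k}) - f_{0,n}(x_0)$. Under assumption~\ref{assump:continuous-distribution-of-covariates}, the empirical measure of the $\Xo{k}$'s in a shrinking window around $x_0$ is well-approximated by Lebesgue measure scaled by $h_0$; a change of variables combined with assumption~\ref{assump:continuity-of-f_{0,n}} then gives $D_n(t) \to \Psi(h_0 t/\sigma_0^2)$ uniformly on compact sets. For $V_n$, conditioning on the order statistics of the $X_i$'s makes the concomitants independent with conditional variances $\sigma_n^2(\Xo{k})$ that converge uniformly to $\sigma_0^2$ by assumption~\ref{assump:data-model-assumption}. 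A Lindeberg central limit theorem then yields the finite-dimensional convergence $V_n \Rightarrow B$, while Doob's maximal inequality supplies tightness; hence $W_n \Rightarrow \mathcal{B}_0$ in the topology of uniform convergence on compact subsets of $\mathbb{R}$.

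\textbf{Step 3: continuous mapping for the slope at zero.} The left-slope of the GCM at $0$ is not a bounded continuous functional on $C(\mathbb{R})$, so I first localize. Using the hypothesis $\limsup_{|c|\to\infty}|c(\psi(c) - \psi(3c/2))| = \infty$ in~\ref{assump:continuity-of-f_{0,n}}, I will show that $\Psi$ dominates Brownian fluctuations on both sides for large $|t|$, so that for every $\epsilon > 0$ there exists $T = T(\epsilon)$ with the property that, with probability at least $1 - \epsilon$ and for all large $n$, the left-slope at $0$ of the GCM of $W_n$ (and of $\mathcal{B}_0$) is determined by the restriction of the process to $[-T,T]$. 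On $C([-T,T])$ the restricted slope functional is continuous at paths whose GCM has a unique left-slope at $0$, and $\mathcal{B}_0$ has this property almost surely because $B$ admits no linear tangent at any fixed point. The extended continuous mapping theorem then yields the stated limit.

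\textbf{Main obstacle.} The hardest step is the localization in Step~3. Because $\psi$ is not assumed monotone, of definite sign, or even continuous (only the qualitative growth $\limsup_{|c|\to\infty}|c(\psi(c)-\psi(3c/2))|=\infty$ is imposed), translating this hypothesis into two-sided superlinear growth of $\Psi$, and thence into a tail bound on the positions of the GCM knots adjacent to $0$, is delicate. Concretely, one must show that for $T$ large the chord from $(-T, W_n(-T))$ to $(T, W_n(T))$ lies a controlled distance above $W_n(0)$ uniformly in large $n$; the hypothesis on $|c(\psi(c)-\psi(3c/2))|$ is essentially a dyadic-scale growth condition on $\psi$ that yields exactly such a margin after integration, and cashing this in rigorously is where most of the effort will lie.
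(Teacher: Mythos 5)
Your plan follows the same Brunk--Prakasa Rao--Wright route as the paper (localize, rescale the CUSUM process, prove weak convergence to a drifted Brownian motion, apply a slope-of-GCM mapping argument), and Steps 1--2 match the paper's Lemmas~\ref{lem:brownian-motion} and~\ref{lem:deterministic-part-final} in substance, including the use of concomitants to avoid assuming $\xi_i \indep X_i$. However, there are two points in Step~3 that need correction. First, you assert that $\psi$ is ``not assumed monotone, of definite sign, or even continuous,'' and treat extracting two-sided superlinear growth of $\Psi$ from the condition $\limsup_{|c|\to\infty}|c(\psi(c)-\psi(3c/2))|=\infty$ as the main difficulty. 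In fact $\psi$ is \emph{automatically} non-decreasing: it is the pointwise limit of the non-decreasing functions $c\mapsto\sqrt{n/s_n}(f_{0,n}(x_0+c/s_n)-f_{0,n}(x_0))$. Part~1 of Proposition~\ref{prop:assumptions-implications} uses exactly this to show that $\psi(c)$ and $\Psi(c)/|c|$ are bounded away from zero as $|c|\to\infty$, which is what makes the limit-process localization (the Rao Lemma~6.2 step, $\mathbb{P}(B(s)>g(s)\mbox{ for some }s>cD_0)\to0$) essentially immediate. You have overestimated this part of the difficulty while missing the free structure that resolves it.

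Second, and more seriously, your localization of the \emph{prelimit} process is insufficient as described. Weak convergence $W_n\Rightarrow\mathcal{B}_0$ holds only on compacts, so showing that ``the chord from $(-T,W_n(-T))$ to $(T,W_n(T))$ lies a controlled distance above $W_n(0)$'' does not rule out that the GCM of the full CUSUM process touches the path at some point far outside $[-T,T]$ with a supporting line of small slope through the index corresponding to $x_0$; outside every compact the rescaled empirical process is no longer close to Brownian motion, and you have no uniform-in-$n$ control there. The paper handles this before any process convergence, via the min-max formula: Lemma~\ref{lem:unequalness-of-fn-star-and-fhat} reduces $\{f^*_{n,c}(x_0)\neq\hat f_n(x_0)\}$ to four events about averages $\mathrm{Av}_n$ over windows of \emph{arbitrary} width crossing fixed thresholds, and Lemma~\ref{lem:prob-of-unequalness-goes-to-zero} bounds each by conditioning on $\mathcal{X}$ and applying a H\'ajek--R\'enyi-type maximal inequality (Lemma~\ref{lem:Hajek-Renyi-special-case}) to the concomitant errors, with the resulting bound $\asymp \overline{\sigma}^2/(c\,\Gamma_j(\pm c/h_0))$ vanishing as $c\to\infty$ precisely because of the condition $\limsup_{|c|\to\infty}|c(\psi(c)-\psi(3c/2))|=\infty$ (Part~4 of Proposition~\ref{prop:assumptions-implications}). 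That maximal inequality over all window sizes, not a single chord estimate, is the missing ingredient in your Step~3, and it is where the dyadic-scale hypothesis on $\psi$ is actually cashed in.
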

The basic sketch of a proof of Theorem~\ref{thm:asymptotic-distribution-of-LSE} is presented in Section~\ref{subsec:proof-main-result}, with a complete proof relegated to~\ref{appsec:proof-of-thm-asymptotic-dist-LSE}.
\begin{remark}[Representation of Limiting Distribution]\label{rem:representation-of-limiting-distribution}
The limiting distribution can be written in multiple ways. Similar to Exercise 3.27 of~\cite{groeneboom2014nonparametric}, one can show that for any $D > 0$,
\[
\mathbf{slGCM}[\mathcal{B}_0(t):t\in\mathbb{R}](0) ~\overset{d}{=}~ \sqrt{D}\mathbf{slGCM}\left[\mathcal{B}(t; D): t\in\mathbb{R}\right](0),
\]
where $\mathcal{B}(t; D) = B(t) + \sqrt{D}\Psi(h_0t/(D\sigma_0^2))$. The proof is as follows.
\begin{align*}
\sqrt{D}\,\mathbf{slGCM}[\mathcal{B}_0(t):\,t\in\mathbb{R}](0) &\overset{d}{=} \mathbf{slGCM}[\sqrt{D}B(t) + \sqrt{D}\Psi(h_0t/\sigma_0^2):\,t\in\mathbb{R}](0)\\
&\overset{d}{=} \mathbf{slGCM}[B(tD) + \sqrt{D}\Psi(h_0t/\sigma_0^2):\,t\in\mathbb{R}](0)\\
&\overset{d}{=} D\mathbf{slGCM}[B(s) + \sqrt{D}\Psi(h_0s/(D\sigma_0^2)):\,s\in\mathbb{R}](0),
\end{align*}
where the second equality follows from the fact that $(\sqrt{D}B(t))_{t\in\mathbb{R}} \overset{d}{=} (B(tD))_{t\in\mathbb{R}}$ and the third equality follows from the fact that for any continuous function $g$, $\mathbf{slGCM}[(1/a)g(at):\,t\in\mathbb{R}](0)\overset{d}{=}\mathbf{slGCM}[g(t):\,t\in\mathbb{R}](0)$.
\end{remark}
\begin{remark}[On the assumptions]\label{rem:relaxation-of-assumptions}
Assumption~\ref{assump:continuity-of-f_{0,n}} is the most crucial of all our assumptions, determining the shape of the limiting distribution. Our requirement of $\limsup_{|c|\to\infty}|c(\psi(3c/2) - \psi(c))| = \infty$ is made to ensure that the isotonic estimator given by the min-max formula~\eqref{eq:min-max-formula} is with probability converging to one equal to the estimator computed on a subset of the data. More precisely, this requirement implies $\psi(\cdot)$ is non-constant and non-zero as $|c|\to\infty$ and is used to ensure that $\Gamma_j(\cdot)$ defined in Proposition~\ref{prop:assumptions-implications} diverge to $\infty$ as $|c|\to\infty$; see Lemma~\ref{lem:prob-of-unequalness-goes-to-zero} in~\ref{appsec:proof-of-thm-asymptotic-dist-LSE} for the connection to isotonic estimator computed on a subset of data. On the other hand, this requirement is not readily verifiable if $\psi(c)$ is $\infty$ or $-\infty$ beyond a bounded set. This happens in the setting of locally asymmetric behavior of $f_{0,n}(\cdot)$ as discussed in Example~\ref{exmp:locally-asymmetric}. It suffices for our proof that $\psi(\cdot)$ is bounded away from zero at $\infty, -\infty$, and certain integrals of $f_{0,n}(x) - f_{0,n}(x_0)$ on the neighborhood of $x_0$; see~\eqref{eq:Markov-inequality} and Lemma~\ref{lem:control-of-v_n}.
\end{remark}
\begin{remark}[On the Limiting Distributions]\label{rem:limiting-distributions}
Theorem~\ref{thm:asymptotic-distribution-of-LSE} expands the collection of known limiting distributions for the isotone LSE significantly. The only known case in the literature~\citep{wright1981asymptotic} is when $\psi(c) = \mathfrak{C}|c|^{\beta}$ and $s_n = n^{1/(2\beta+1)}$ for some constant $\mathfrak{C}$, that too only in the case where the functions $f_{0,n}(\cdot)$ are not changing with $n$. Assumption~\ref{assump:continuity-of-f_{0,n}} which dictates the local behavior and the shape of the limiting distribution bears a close resemblance to the one made in the study of sample quantiles or quantile regression in non-regular settings. In the sample quantile problem, the cumulative distribution function of the random variables takes the place of $f_{0,n}(\cdot)$ in~\eqref{eq:assumption-psi-limit}; see, for example, Eq. (9) of~\cite{knight2002limiting} and Eq. (4) of~\cite{knight1998limiting}. It is an interesting open problem to characterize the set of all possible limiting distributions for the isotone LSE; in the case of sample quantiles, assumptions like~\ref{assump:continuity-of-f_{0,n}} allow complete characterization of limiting distributions~\citep{knight2002limiting}.
\end{remark}
Before proceeding to the proof of Theorem~\ref{thm:asymptotic-distribution-of-LSE}, we study the implications of Theorem~\ref{thm:asymptotic-distribution-of-LSE}.
Firstly, it is easy to see that for any non-decreasing function $\psi(\cdot)$ satisfying $\psi(0) = 0$, we can construct a sequence of functions $\{f_{0,n}\}_{n\ge1}$ such that assumption~\ref{assump:continuity-of-f_{0,n}} is true. In fact, one simple example of such $f_{0,n}(\cdot)$ is
\begin{equation}\label{eq:construction-of-f_0n}
f_{0,n}(x) = \alpha_0 + \sqrt{\frac{s_n}{n}}\psi(s_n(x - x_0)),\quad\mbox{for all}\quad x\in\mathbb{R}, n\ge1.
\end{equation}
Here $\alpha_0 = f_{0,n}(x_0)$ and $s_n = o(n)$. This shows that the local rate of convergence of the LSE can be slow or fast depending on the local behavior of the conditional mean function. Secondly, because $\Psi'(c) = \psi(c)$ for all continuity points $c$ of $\psi(\cdot)$, we get that the drift for the Brownian motion in the limiting distribution can also be made equal to arbitrary non-negative convex functions that are zero at zero.
Next, we show a generalization of Theorem \ref{thm:asymptotic-distribution-of-LSE} to the case where we have $k_n$ observations for each $n$, with the true function being $f_{0,n}$. Such a result is useful for ``subsampling''-type inference procedures.
\begin{corollary}\label{cor:triangular-asymptotic-distribution-of-LSE}
    Suppose $(X_i, Y_i), 1\le i\le n$ are independent and identically distributed random vectors satisfying $Y_i = f_{0,n}(X_i) + \xi_i,~i\in[n]$ for a monotone non-decreasing function $f_{0,n}(\cdot)$. Let $x_0$ in the interior of the support of the covariate distribution, and suppose assumptions \ref{assump:data-model-assumption}, \ref{assump:continuous-distribution-of-covariates}, and \ref{assump:continuity-of-f_{0,n}} hold and consider the isotonic LSE computed on a subset of these sample with size $k_n$, such that $k_n/n\to\tau_1\in(0,\infty)$ and $s_{k_n}/s_n\to\tau_2\in(0,\infty)$. Then,
   \[
    \sqrt{\frac{k_n}{s_{k_n}}}\frac{h_0}{\sigma^2_0}(\hat{f}_{k_n}(x_0)-f_{0,n}(x_0))~\overset{d}{\to}~\mathbf{slGCM}[\tilde{\mathcal{B}}_0(t):t\in\mathbb{R}](0),
    \]
    where $\tilde{\mathcal{B}}_0(t) := B(t) + \sqrt{\tau_1\tau_2}\Psi(h_0t/(\sigma_0^2\tau_2))$ with $B(t)$ being the two-sided Wiener--L{\'e}vy process on $\mathbb{R}$ and $\Psi$ is as defined in \eqref{eq:definition-Psi}.
    \end{corollary}
\noindent A proof is presented in~\ref{appsec:proof-of-cor-traingular-asymptotic-distribution-of-LSE}.
\subsection{Applications of Theorem~\ref{thm:asymptotic-distribution-of-LSE}}
\label{sec:applications-asymptotic-distribution-of-LSE}
Now, we show by way of examples the different cases that can be handled by Theorem~\ref{thm:asymptotic-distribution-of-LSE}. It will be clear that the main result of~\cite{wright1981asymptotic} is a very special case of Theorem~\ref{thm:asymptotic-distribution-of-LSE}.
\begin{exmp}[\cite{wright1981asymptotic}]\label{exmp:Wright-1981}
Suppose, as in the main result of~\cite{wright1981asymptotic}, $f_{0,n}(\cdot)$ does not change with the sample size $n$ and satisfies
\[|f_{0,n}(x)-f_{0,n}(x_0)|=A|x-x_0|^{\theta}(1+o(1))~~~\text{as }x\to x_0,~A>0.\]
Then, $f_{0,n}(x) - f_{0,n}(x_0) = A|x - x_0|^{\theta}\mathrm{sign}(x - x_0)(1 + o(1))$ as $x\to x_0$. With $s_n = n^{1/(2\theta + 1)}$, we can verify assumption~\ref{assump:continuity-of-f_{0,n}} from this condition to get
\begin{equation}\label{eq:psi-and-s_n-Wright}
\psi(c)=A\cdot\mathrm{sign}(c)|c|^{\theta},\quad s_n = n^{1/(2\theta+1)}.
\end{equation}
\[
\Psi(t) = \frac{A|t|^{\theta+1}}{\theta+1}.
\]
To illustrate, we present the plot of limiting distribution when $\theta = 2, A = 1$. Formally, we consider the data-generating process
\begin{equation}\label{eq:data-gen-process}
Y = f_{0,n}(X) + \xi,\quad\mbox{where}\; X\sim \mathrm{Unif}(-1, 1), ~\xi\sim N(0, 1),
\end{equation}
and $f_{0,n}(\cdot)$ is obtained using~\eqref{eq:construction-of-f_0n} with $\alpha_0 = 0$ and $s_n, ~\psi(\cdot)$ as in~\eqref{eq:psi-and-s_n-Wright}. In Figure~\ref{fig:psi_eg1}, we present a scatterplot of a sample of size $n = 500$, the plot of $B(t) + |(t/2)|^{3}/3$ along with its greatest convex minorant ($B(\cdot)$ is the two-sided Brownian motion), and a histogram of $10^5$ observations from the random variable $\slGCM[B(t) + |t|^3/24: t\in\mathbb{R}](0)$, which represents the limiting distribution from Theorem~\ref{thm:asymptotic-distribution-of-LSE} in this example.
\begin{figure}[!h]
    \centering
    \includegraphics[width = \textwidth]{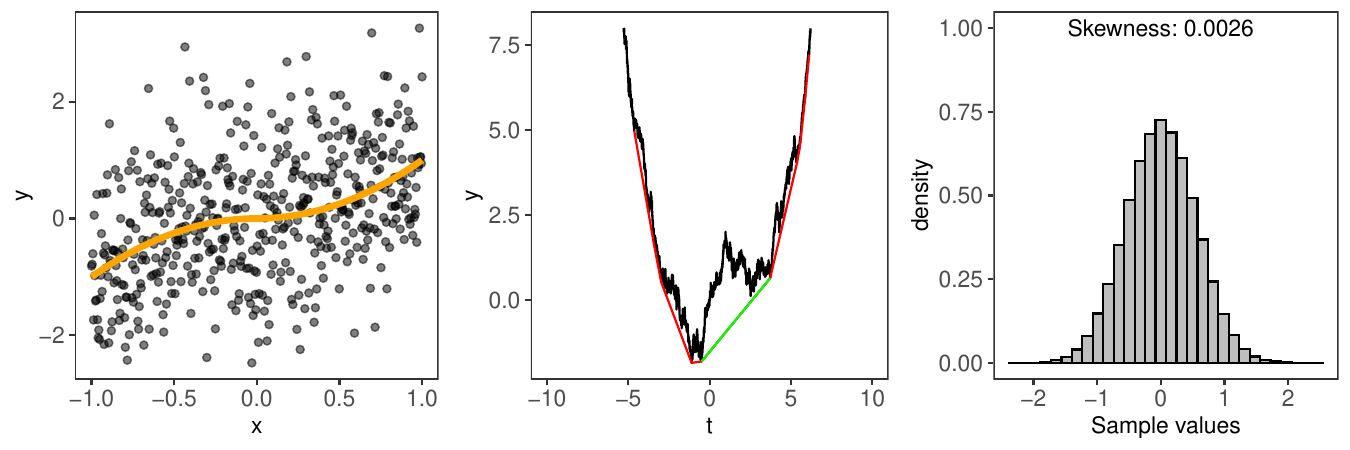}
    \caption{The left panel shows the scatter plot of data from $Y = f_{0,n}(X) + \xi,$ with $n = 500, X\sim\mathrm{Unif}(-1, 1), \xi|X\sim N(0, 1)$ and $f_{0,n}(x) = n^{-2/5}\psi(n^{1/5}X)$ along with the plot of $f_{0,n}(\cdot)$. The middle panel shows the plot of a two-sided Brownian motion with drift $|t|^3/24$ along with the greatest convex minorant; the green line shows the slope from the left of the greatest convex minorant at 0. The right panel shows the histogram of $10^5$ observations from the random variable $\slGCM[B(t) + |t|^3/24:t\in\mathbb{R}](0).$}
    \label{fig:psi_eg1}
\end{figure}
\end{exmp}
\begin{exmp}[Regularly varying functions]\label{exmp:regular-varying-functions}
This example is an extension of the assumption in~\cite{wright1981asymptotic} allowing for a slowly varying factor to the local polynomial behavior. This yields logarithmic factors in the rate of convergence.
Suppose the function $f_{0,n}$ is non-decreasing and
\[|f_{0,n}(x)-f_{0,n}(x_0)|=A|x-x_0|^{\theta}L(|x-x_0|)(1+o(1))~~~\text{as }x\to x_0,~A>0,\]
where $L:[0,\infty)\to[0,\infty)$ is a slowly varying function at 0, i.e., $L(\lambda t)/L(t) \to 1$ as $t\to0$ for all $\lambda > 0$.
This implies that
\[
f_{0,n}(x) - f_{0,n}(x_0) = A|x - x_0|^{\theta}L(|x - x_0|)\mathrm{sign}(x - x_0)(1 + o(1))\quad\mbox{as}\quad x\to x_0.
\]
By choosing $s_n$ such that $n^{1/2}L(1/s_n)/s_n^{\theta+1/2} \to 1$ as $n\to\infty$, we can take
\[
\psi(c) = A|c|^{\theta}\mathrm{sign}(c),\quad c\in\mathbb{R}.
\]
\noindent Note that this $\psi(\cdot)$ is the same as the one from Example~\ref{exmp:Wright-1981}, but now the rate of convergence is different. It is easy to see that $s_n$ here satisfies $s_n = n^{1/(2\theta+1)}L^*(n)$ for some function $L^*(\cdot)$ that is slowly varying at $\infty$.
\end{exmp}
\begin{exmp}[Locally Asymmetric Functions]\label{exmp:locally-asymmetric}
In previous examples, the assumption on $f_{0,n}$ implies that the behavior is antisymmetric around $x_0$, i.e., $\psi(-c) = -\psi(c)$ for all $c > 0$. Furthermore, the local polynomial behavior is also the same on either side. It is easy to construct examples where $f_{0,n}$ behaves like a quadratic on the left of $x_0$ and as a cubic on the right of $x_0$. In this example, we consider such cases.
Let $f_{0,n}$ be non-decreasing functions such that for some $\theta_1, \theta_2 > 0$, $A_1, A_2 > 0$ and some functions $L_1(\cdot), L_2(\cdot)$ slowly varying at $0$,
\[\lim_{x\downarrow x_0}\frac{f_{0,n}(x)-f_{0,n}(x_0)}{(x-x_0)^{\theta_1}L_1(x-x_0)}~\to~ A_1,\quad\mbox{and}\quad \lim_{x\uparrow x_0}\frac{f_{0,n}(x_0)-f_{0,n}(x)}{(x_0-x)^{\theta_2}L_2(x_0-x)}~\to~ A_2.\]
\noindent Define $s_n$ based on $\theta_1$ and $\theta_2$ as follows:
\begin{equation}\label{eq:rate-and-s_n-local-asymmetric}
\text{Choose $s_n$ s.t. }
\begin{cases}{n^{1/2}L_1(1/s_n)}/{s_n^{\theta_1+1/2}}\to 1, &\text{if }\theta_1\geq\theta_2,\\
{n^{1/2}L_2(1/s_n)}/{s_n^{\theta_2+1/2}}\to 1, &\text{if }\theta_1<\theta_2
\end{cases}
\end{equation}
Note that $s_n$ scales like $n^{1/(2\theta + 1)}$ up to a slowly varying (at $\infty$) factor depending on $n$, with $\theta := \max\{\theta_1, \theta_2\}$. This implies that if $\theta_1 \ge \theta_2$, then $n^{1/2}L_2(1/s_n)/s_n^{\theta_2 + 1/2} \to \infty\mathbf{1}_{\theta_1>\theta_2}+\mathbf{1}_{\theta_1=\theta_2}$
We claim that with this choice of $\{s_n\}_{n\ge1}$, assumption~\ref{assump:continuity-of-f_{0,n}} holds true with $\theta = \max\{\theta_1, \theta_2\}$,
\begin{equation}\label{eq:psi-definition-local-asymmetry}
\psi(c) = \begin{cases}
A_1c^{\theta}, &\mbox{if }c \ge 0, \theta_1 \ge \theta_2,\\
-A_2(-c)^{\theta}, &\mbox{if }c \le 0, \theta_1 \le \theta_2,\\
\infty, &\mbox{if }c > 0, \theta_1 < \theta_2,\\
-\infty, &\mbox{if }c < 0, \theta_1 > \theta_2.
\end{cases}
\end{equation}
This implies that
\begin{align*}
\Psi(t) &= \frac{|t|^{\theta+1}}{\theta+1}\left[{A_1}\mathbf{1}\{t \ge 0, \theta_1 \ge \theta_2\} + {A_2}\mathbf{1}\{t\le 0, \theta_1 \le \theta_2\}\right]\\
&\quad+\infty\left[\mathbf{1}\{t > 0, \theta_1 < \theta_2\}+\mathbf{1}\{t < 0, \theta_1 > \theta_2\}\right].
\end{align*}
\noindent A proof of the above fact is presented in~\ref{appsec:applications}. To connect to the classical case of the non-zero first derivative of $f_{0,n}$ at $x_0$, consider the case of $\theta_1 = \theta_2 = 1$ but with left and right derivatives of $f_{0,n}$ being unequal at $x_0$:
\[\lim_{x\downarrow x_0}\frac{f_{0,n}(x)-f_{0,n}(x_0)}{x-x_0}~\to~ A_1\quad\mbox{and}\quad \lim_{x\uparrow x_0}\frac{f_{0,n}(x)-f_{0,n}(x_0)}{x-x_0}~\to~ A_2.\]
From~\eqref{eq:rate-and-s_n-local-asymmetric}, it follows that $s_n = n^{1/3}$ and $\sqrt{n/s_n} = n^{1/3}$, which is the same rate of the case of non-zero first derivative in monotone regression. But the limiting distribution is $\mathbf{slGCM}(t\mapsto B(t) + \Psi(t))$, different from $\mathbf{slGCM}(t\mapsto B(t) + A_1t^2/2)$ or $\mathbf{slGCM}(t\mapsto B(t) + A_2t^2/2)$, where
\[
\Psi(t) = \frac{t^2}{2}\big[A_1\mathbf{1}\{t\ge0\} + A_2\mathbf{1}\{t < 0\}\big].
\]
\end{exmp}
\begin{exmp}[Near flat functions]\label{exmp:near-flat-functions}
Suppose the function $f_{0,n}$ is non-decreasing and is approximately an $N$-th degree polynomial around $x_0$:
\[
f_{0,n}(x)-f_{0,n}(x_0)=\sum_{j=1}^{N}\frac{a_{n,j}(x-x_0)^j}{j!}+R_n(x;~x_0) \quad\mbox{as}\quad x\to x_0,
\]
with the coefficients $\{a_{n,j}\}$ and $R_n(\cdot, \cdot)$ satisfying the following assumptions.
\begin{enumerate}[label=(A\arabic*$^\prime$)]
\item There exists a sequence $\{s_n\}_{n\ge1}$ satisfying $s_n\to\infty$ and $s_n/n\to0$ such that $n^{1/2}a_{n,j}/s_n^{j+1/2}\to a_j \in\mathbb{R}$ for all $1\le j\le N$ and $a_{j_0}>0$ for some $1\le j_0 \le N$.\label{eq:poly1}
\item For any sequence $\{c_n\}_{n\ge1}$ satisfying $c_n\to c$, $\sqrt{n/s_n}R_n\left(x_0+{c_n}/{s_n};x_0\right)\to 0$ as $n\to\infty$.\label{eq:poly2}
\end{enumerate}
For verification of assumption~\ref{assump:continuity-of-f_{0,n}}, observe that as $n\to\infty$,
\begin{align*}
&\sqrt{\frac{n}{s_n}}\left(f_{0,n}\left(x_0+\frac{c_n}{s_n}\right)-f_{0,n}(x_0)\right)\\
&=\sqrt{\frac{n}{s_n}}\sum_{j=1}^N \frac{a_{n,~j}}{j!}\left(\frac{c_n}{s_n}\right)^{j} + \sqrt{\frac{n}{s_n}}R_n\left(x_0+\frac{c_n}{s_n};~x_0\right)\\
&=\sum_{j=1}^N\frac{\sqrt{n}}{(s_n)^{j+\frac{1}{2}}}\cdot \frac{a_{n,j}}{j!}\cdot c_n^{j}+o(1)\\
&\rightarrow \psi(c) := \sum_{j=1}^N\frac{a_j}{j!}\cdot c^{j}.
\end{align*}
Also,
\[
    \Psi(t)=\int_0^t \psi(s)ds = \int_0^t \sum_{j=1}^N\frac{a_j}{j!}\cdot s^{j}=\sum_{j=1}^N\frac{a_j}{(j+1)!}\cdot t^{j+1}.
\]
We also note that the rate is unique, by Proposition~\ref{prop:assumptions-implications}. It is worth mentioning that not all choices of coefficients $\{a_j\}_{j\ge1}$ are allowed because $\psi(\cdot)$ has to be a non-decreasing function on $\mathbb{R}.$ But once such a $\psi(\cdot)$ is obtained, all possible $s_n$'s can be used to obtain different rates of convergences.
\end{exmp}
Note that the above example allows for functions whose first derivative is not zero but near zero. For example, consider
\begin{equation}\label{eq:f_0n-exmp4}
f_{0,n}(x) = \frac{x}{n^{1/5}} + \frac{x^3}{6}\quad\mbox{for}\quad x\in[-1, 1].
\end{equation}
Clearly, $f_{0,n}(0) = 0$, and $f_{0,n}'(0) = n^{-1/5} \neq 0$ but does converge to zero.
It is easy to check that with $s_n=n^{1/5}$ and $N=3$, $a_{n,1}=1/n^{1/5}$, $a_{n,2}=0$, $a_{n,3} = 1$, and $R_n(x;~x_0)=0$, both~\ref{eq:poly1} and~\ref{eq:poly2} are satisfied and $a_1=1$ and $a_2=a_3 = \cdots = 0$. Thus, the choice of $\psi$  and $s_n$ in this case is,
\begin{equation}\label{eq:psi-and-s_n-exmp4}
\psi(t) =  t,\ s_n=n^{1/5},
\end{equation}
and the correct rate of convergence in this case is $\sqrt{n/s_n}=n^{2/5}$.
\begin{remark}[Failure of Wright-type calibration]
A naive interpretation of Wright's result would suggest an $n^{1/3}$ rate of convergence, which leads to a wrong rate for~\eqref{eq:f_0n-exmp4}. One might wonder if it is possible to recover the correct rate by taking into account the dependence on the first derivative (i.e., $A$) from Wright's result~\eqref{eq:pointwise-asym-result}. The refined rate therefore would be $(n/f'_{0,n}(x_0))^{1/3}$, rather than just $n^{1/3}$. In this example, we know that $f'_{0,n}(0) = n^{-1/5}$ implying the rate $(n/n^{-1/5})^{1/3} = n^{2/5}$, which is indeed the correct rate of convergence derived from our result.
This recovery of the correct rate is only a coincidence and may not always be the case. We demonstrate this with a modification of $f_{0,n}$ as follows,
\begin{equation}\label{eq:f_0n-exmp4-modified}
f_{0,n}(x) = \frac{x}{n^{5/16}} + \frac{x^3}{6}\quad\mbox{for}\quad x\in[-1, 1].
\end{equation}
In this case, we still have $\beta = 1$ and  $f'_{0,n}(0) = n^{-5/16}$, giving us Wright's rate of convergence rate as $(n/n^{-5/16})^{1/3} = n^{7/16}$. However, \ref{eq:poly1} and~\ref{eq:poly2} can be satisfied by setting $s_n = n^{1/7}$ and $N = 3, a_{n,1} = n^{-5/16}, a_{n,2} = 0, a_{n,3} = 1$, and $R_n(x,x_0) = 0$. This leads to $a_1 = 0, a_3 = 1$ and giving us $\psi(t) = t^2$ and $s_n = n^{1/7}$. Therefore, the correct rate of convergence in this case is $\sqrt{n/s_n} = n^{3/7}$, which is clearly different from Wright's result. \par \vspace{1mm}
\begin{table}[h]
    \centering
    \setlength{\tabcolsep}{4pt}
    \begin{tabular}{lccccl}
    \hline
       & $\alpha_1, \alpha_2\in(0,1)$ &  $s_n$ & $(a_1, a_3)$ & Conv. rate & Asymp. Dist.\\[1 ex]
    \hline\hline
      First&   $\al_1 > \al_2$& $n^{\al_1}$ & $(1,0)$ & $n^{\frac{1}{2} - \frac{\al_1}{2}}$ & $\slGCM[B(t) + |t|^2/8](0)$ \\[1 ex]
    \hline
      Second &   $\al_1 < \al_2$ & $n^{\al_2}$ & $(0,1)$ & $n^{\frac{1}{2} - \frac{\al_2}{2}}$ & $\slGCM[B(t) + |t|^4/64](0)$ \\[1 ex]
    \hline
     Dual  &   $\al_1 = \al_2 = \al_\text{eq}$ & $n^{\al_\text{eq}}$ & $(1,1)$ & $n^{\frac{1}{2} - \frac{\al_\text{eq}}{2}}$ & $\slGCM[B(t) + |t|^2/8 + |t|^4/64](0)$ \\[1 ex]
    \hline
    \end{tabular}
    \caption{Asymptotic scenarios possible for $f_{0,n}$ in \eqref{eq:f_0n-exmp4-gen}. Wright's result and the true asymptotic behavior (convergence rate and asymptotic distribution) only match in the case where the first term dictates the asymptotic behavior (row 1). }
    \label{table:exmp-4-gen-cases}
\end{table}
To understand further on the difference between the two scenarios of $f_{0,n}$ in \eqref{eq:f_0n-exmp4} and \eqref{eq:f_0n-exmp4-modified}, consider a generalized form which includes both examples. Let $\alpha_1,\alpha_2 \in \mathbb{R}$ and consider the function $f_{0,n}$ of the following form:
\begin{align}
f_{0,n}(x) = n^{(3\alpha_1 - 1)/2}x + \frac{n^{(7\alpha_2 -1)/2}x^3}{6}\quad\mbox{for}\quad x \in[-1,1]\label{eq:f_0n-exmp4-gen}
\end{align}
In order that assumptions~\ref{eq:poly1} and~\ref{eq:poly2} are satisfied, we need the following conditions to hold for some sequence $\{s_n\}_{n\geq 0}$:
\begin{align*}
    \left(\frac{n^{\alpha_1}}{s_n}\right)^{3/2}\to a_1~\mbox{and}~\left(\frac{n^{\alpha_2}}{s_n}\right)^{7/2}\to a_3
\end{align*}
Depending on the range of values of $\alpha_1$ and $\alpha_2$, different asymptotic behavior are possible which is illustrated in Table~\ref{table:exmp-4-gen-cases}. As we can see, in the example of $f_{0,n}(x) = {x}/{n^{1/5}} + {x^3}/{6}$ in \eqref{eq:f_0n-exmp4}, we get that $\al_1 = 1/5, \al_2 = 1/7$ which indicates that the asymptotics is driven by the first-term. In this case, Wright's result and the true asymptotic distribution are the same. But in the second example of $f_{0,n}(x) = {x}/{n^{5/16}} + {x^3}/{6}$ in \eqref{eq:f_0n-exmp4-modified}, we get that $\al_1= 1/8, \al_2 = 1/7$. In this case, although the first term is non-zero, the second term is the one that dictates the asymptotics, and Wright's result fails to capture this true asymptotic behavior. This is also clear from the difference in the proposed asymptotic distribution to the actual one. Wright's result suggests that the asymptotic distribution is $\slGCM[B(t) + |t|^2/8:t\in\mathbb{R}](0)$ whereas in reality it is $\slGCM[B(t) + |t|^4/64 :t\in\mathbb{R}](0)$. More interestingly, consider the dual-term driven case, for example as follows,
\begin{align}
    f_{0,n}(x) = x/n^{2/7} + x^3/6,
\end{align}
 where $\al_1 = \al_2 = 1/7$. Although Wright's result captures the true asymptotic convergence rate of $n^{3/7}$, the asymptotic distribution is $\slGCM[B(t) + |t|^2/8 + |t|^4/64](0)$, which is entirely outside the scope of Wright's framework.
 \paragraph{A coverage experiment highlighting the failure of Wright-type calibration.}
We complement the above discussion with a finite-sample coverage experiment in the second-term driven regime (row~2 of Table~\ref{table:exmp-4-gen-cases}), where Wright's rate and limiting distribution are both misspecified.
Specifically, we generate data from \eqref{eq:data-gen-process} with $x_0=0$ and $f_{0,n}$ given by \eqref{eq:f_0n-exmp4-gen}, and fix $(\alpha_1,\alpha_2)=(0.10,0.60)$ so that $\alpha_2>\alpha_1$. We compare two $(1-\alpha)$ confidence intervals with $\alpha=0.05$:
(i) a Wright-type asymptotic confidence interval obtained by combining the quadratic-drift limit $\slGCM$ with Wright's scaling choice $s_n=n^{\alpha_1}$, and
(ii) HulC, constructed by splitting the sample into $B_\alpha$ batches (with $B_\alpha$ chosen as in Section~\ref{sec:simulation}) and taking the range of the batch-wise isotonic estimates at $0$.
Figure~\ref{fig:exp3_coverage_width} reports the empirical coverage and average interval length.
As predicted by the theory above, the Wright-type interval substantially undercovers in this regime because the correct scaling is governed by $\alpha_2$ rather than $\alpha_1$, whereas HulC remains stable across $n$.
\begin{figure}[!h]
    \centering
    \includegraphics[width=1\textwidth]{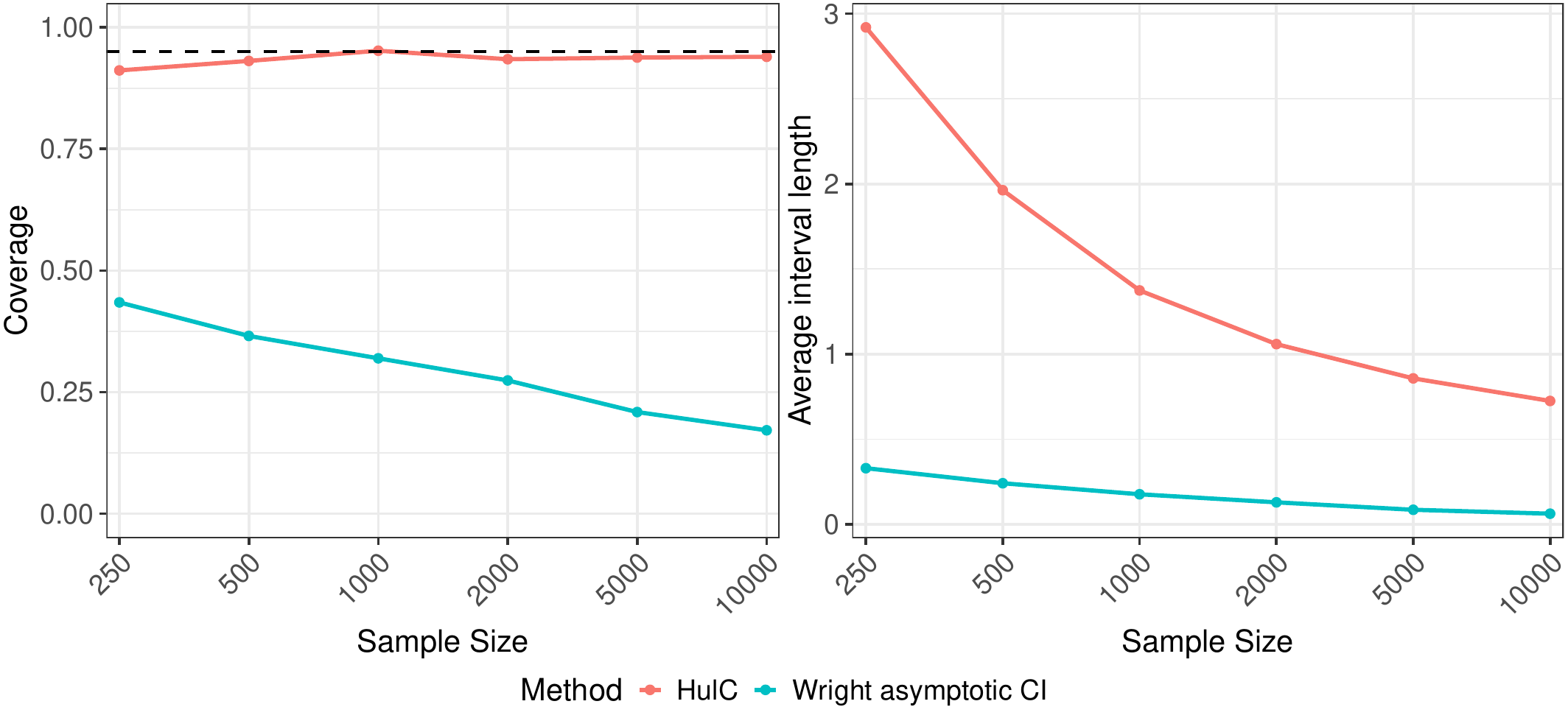}
    \caption{Empirical coverage and average width for Wright-type asymptotic confidence intervals and HulC at $x_0=0$, under the model \eqref{eq:data-gen-process} with $f_{0,n}$ as in \eqref{eq:f_0n-exmp4-gen} and $(\alpha_1,\alpha_2)=(0.10,0.60)$. Each point is based on $2000$ Monte Carlo replications, with $n\in\{250,500,1000,2000,5000,10000\}$ and nominal level $1-\alpha=0.95$ (dashed line).}
    \label{fig:exp3_coverage_width}
\end{figure}
\end{remark}
\subsection{Comments on the Assumptions and Proof of Theorem~\ref{thm:asymptotic-distribution-of-LSE}}\label{subsec:proof-main-result}
The following propositions consider a few implications of the assumptions, as well as their verification. Proposition~\ref{prop:verification-of-A3} shows that one can verify assumption~\ref{assump:continuity-of-f_{0,n}} by first verifying that the limit in~\eqref{eq:assumption-psi-limit} exists when $c_n = c$ for all $n\ge1$ and that $\psi(\cdot)$ is a continuous function.
\begin{proposition}\label{prop:verification-of-A3}
Suppose there exists a sequence $\{s_n\}_{n\ge1}$ satisfying $s_n\to\infty$ and $s_n/n\to0$ such that for every $c\in\mathbb{R}$, the limit of $\sqrt{n/s_n}(f_{0,n}(x_0 + c/s_n) - f_{0,n}(x_0))$ as $n\to\infty$ exists and equals $\psi(c)$. If $\psi(\cdot)$ is a continuous function, then under~\ref{assump:data-model-assumption} for any $c\in\mathbb{R}$ and any sequence $\{c_n\}_{n\ge1}$ satisfying $c_n\to c$ as $n\to\infty$,
\[
\psi(c) = \lim_{n\to\infty}\,\sqrt{\frac{n}{s_n}}\left(f_{0,n}\left(x_0 + \frac{c_n}{s_n}\right) - f_{0,n}(x_0)\right).
\]
\end{proposition}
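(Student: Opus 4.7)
The plan is to combine the monotonicity of $f_{0,n}$ (guaranteed by assumption~\ref{assump:data-model-assumption}) with the continuity of $\psi$, through a standard sandwich argument. The strategy has three steps: first trap the perturbed argument $c_n$ between the two nearby constants $c\pm\varepsilon$; then apply the constant-sequence hypothesis at $c\pm\varepsilon$ to squeeze the liminf and limsup of the perturbed expression; finally collapse the bounds by letting $\varepsilon\downarrow 0$ and invoking continuity of $\psi$ at $c$.

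Concretely, I would fix $c\in\mathbb{R}$ and any sequence $c_n\to c$, and for an arbitrary $\varepsilon>0$ use the fact that $c-\varepsilon\le c_n\le c+\varepsilon$ for all sufficiently large $n$. Since $s_n\to\infty$, and in particular $s_n>0$ for large $n$, dividing by $s_n$ preserves the ordering of the arguments $x_0+(c-\varepsilon)/s_n\le x_0+c_n/s_n\le x_0+(c+\varepsilon)/s_n$, and the monotonicity of $f_{0,n}$ then yields the corresponding ordering of the function values. Subtracting $f_{0,n}(x_0)$ and multiplying by the positive factor $\sqrt{n/s_n}$ preserves the chain of inequalities. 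Setting
\[
g_n(t)\ :=\ \sqrt{n/s_n}\bigl(f_{0,n}(x_0+t/s_n)-f_{0,n}(x_0)\bigr),
\]
and applying the hypothesis along the constant sequences at $c-\varepsilon$ and $c+\varepsilon$, I obtain
\[
\psi(c-\varepsilon)\ \le\ \liminf_{n\to\infty} g_n(c_n)\ \le\ \limsup_{n\to\infty} g_n(c_n)\ \le\ \psi(c+\varepsilon).
\]
Sending $\varepsilon\downarrow 0$ and using continuity of $\psi$ at $c$ collapses the outer bounds to the common value $\psi(c)$, which is the desired limit.

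I do not anticipate any real obstacle to this argument; the only things to monitor are that all the multipliers involved ($s_n$ and $\sqrt{n/s_n}$) are positive, so the inequalities propagate correctly irrespective of the signs of $c$ and $c_n$, and that the boundary case $c=0$ requires no separate treatment because the hypothesis is stated for every $c\in\mathbb{R}$ (and $\psi(0)=0$ is forced by the constant sequence $c_n\equiv 0$). The proposition is essentially a monotone-continuity-of-limits fact, standard in shape-constrained analysis, and the role of assumption~\ref{assump:data-model-assumption} is only to supply the monotonicity of $f_{0,n}$ used in the sandwich.
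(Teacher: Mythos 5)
Your proof is correct and follows essentially the same route as the paper: the paper notes that $g_n$ is non-decreasing and then cites the standard fact that pointwise convergence of monotone functions to a continuous limit implies continuous convergence (Resnick), while you simply write out the standard sandwich proof of that same fact. No gaps.
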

\begin{proof}
Define
\[
g_n(c) := \sqrt{n/s_n}(f_{0,n}(x_0 + c/s_n) - f_{0,n}(x_0)).
\]
Because $f_{0,n}(\cdot)$ is a non-decreasing function, $g_n(\cdot)$ is a non-decreasing function for every $n\ge1$.
The result follows from the fact that pointwise convergence of a sequence of monotone functions to a continuous function implies continuous convergence; see Section 0.1 of~\cite{resnick2008extreme} and Proposition 2.1 of~\cite{resnick2007heavy} for details.
\end{proof}
The following proposition (proved in \ref{appsec:proof-of-prop-assumptions-implications}) provides various useful implications of our assumptions~\ref{assump:data-model-assumption} and~\ref{assump:continuity-of-f_{0,n}}. In addition, part 2 of Proposition~\ref{prop:assumptions-implications} proves that under~\ref{assump:continuity-of-f_{0,n}} the rate of convergence of the isotonic LSE is uniquely defined.
\begin{proposition}\label{prop:assumptions-implications}
Under assumptions~\ref{assump:data-model-assumption} and~\ref{assump:continuity-of-f_{0,n}}, $\psi(\cdot)$ and the sequence $\{s_n\}$ satisfy the following properties:
\begin{enumerate}
\item $\psi(\cdot)$ is a non-decreasing function and both $\psi(c), \Psi(c)/|c|$ are bounded away from zero as $|c|\to\infty$.
\item Suppose two sequences $\{s_n\}$ and $\{s_n'\}$ satisfy \ref{assump:continuity-of-f_{0,n}} (with possibly different limit functions $\psi$ and $\tilde{\psi}$), i.e., for any sequence $\{c_n\}$ converging to $c\in\mathbb{R}\backslash\{0\}$,
\[
\psi(c)=\displaystyle\lim_{n\to\infty}\sqrt{\frac{n}{s_n}}\left(f_{0,n}\left(x_0+\frac{c_n}{s_n}\right)-f_{0,n}(x_0)\right)~~\text{exists }\forall~c\in\mathbb{R}\setminus\{0\}\text{ and also, }
\]
\[
\tilde{\psi}(c)=\displaystyle\lim_{n\to\infty}\sqrt{\frac{n}{s'_n}}\left(f_{0,n}\left(x_0+\frac{c_n}{s'_n}\right)-f_{0,n}(x_0)\right) ~~\text{exists }\forall~c\in\mathbb{R}\setminus\{0\},
\]
and both $\psi, \tilde{\psi}$ satisfy properties listed in~\ref{assump:continuity-of-f_{0,n}},
then,
\[\lim_{n\to\infty}s_n/s_n'=a\in(0,~\infty) \text{ exists and }\tilde{\psi}(c)=\sqrt{a}\psi(ac)~~\forall~c\in\mathbb{R}\]
Moreover,
\[\tilde{\psi}\equiv\psi~\text{i.e., }\tilde{\psi}(c)=\psi(c)~\forall~c\in\mathbb{R}\iff a=1\]
Thus, if the two sequences $\{s_n\}$ and $\{s_n'\}$ yield the same limiting function $\psi$, then the limit of their ratio is 1. Thus, the rate is unique in the sense that any two such rates are asymptotically equivalent.
\item If $\psi(\cdot)$ is finite on $\mathbb{R}\setminus\{0\}$, then the limit
\[
\lim_{n\to\infty}\sqrt{\frac{n}{s_n}}\int_{0}^t c\left(f_{0,n}\left(x_0+x\frac{c_n}{s_n}\right)-f_{0,n}(x_0)\right)dx,
\]
exists and equals $\Psi(ct)$ for all $t\ge 0.$ Similarly, the limit
\[
\lim_{n\to\infty}\sqrt{\frac{n}{s_n}}\int_{t}^0 c\left(f_{0,n}\left(x_0+x\frac{c_n}{s_n}\right)-f_{0,n}(x_0)\right)dx,
\] also exists and equals $-\Psi(ct)$ for all $t < 0.$
\item Suppose $\psi(\cdot)$ is finite on $\mathbb{R}\setminus\{0\}$. Define
\begin{align*}
\Gamma_1(c) &:=
\begin{cases}
\psi(c) - \frac{\Psi(c)}{c}, &\mbox{if }c > 0,\\
-\psi(c) + \frac{\Psi(c)}{c}, &\mbox{if }c < 0,
\end{cases}\\
\Gamma_2(c) &:=
\begin{cases}
-\psi(c) + \frac{\Psi(2c) - \Psi(c)}{c}, &\mbox{if }c > 0,\\
\psi(c) - \frac{\Psi(2c) - \Psi(c)}{c}, &\mbox{if }c < 0.
\end{cases}
\end{align*}
Then $\Gamma_1(c), \Gamma_2(c)$ are non-negative for all $c\in\mathbb{R}$,
\[
\limsup_{|c|\to\infty}|c|\Gamma_1(c) = \limsup_{|c|\to\infty} |c|\Gamma_2(c) = \infty.
\]
\end{enumerate}
\end{proposition}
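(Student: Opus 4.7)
\textbf{Proof plan for Proposition~\ref{prop:assumptions-implications}.}

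\emph{Part 1.} Monotonicity of $\psi$ is immediate from that of $f_{0,n}$: given $c_1<c_2$, we have $f_{0,n}(x_0+c_1/s_n)\le f_{0,n}(x_0+c_2/s_n)$ for every $n$, and passing to the limit with constant sequences $c_n\equiv c_j$ yields $\psi(c_1)\le\psi(c_2)$. For the lower bound on $|\psi|$ as $|c|\to\infty$, I argue by contradiction: if $\psi(c)\to 0$ as $c\to\infty$, monotonicity plus $\psi(0)=0$ force $\psi\equiv 0$ on $[0,\infty)$, which makes $c(\psi(c)-\psi(3c/2))=0$ for all $c>0$. A symmetric argument on the negative side, combined with~\ref{assump:continuity-of-f_{0,n}}, contradicts $\limsup_{|c|\to\infty}|c(\psi(c)-\psi(3c/2))|=\infty$. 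For $\Psi(c)/|c|$, monotonicity of $\psi$ gives $\Psi(c)/c\ge c^{-1}\int_{c/2}^c\psi(s)\,ds\ge \psi(c/2)/2$ for $c>0$, which is bounded away from zero by what was just established.

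\emph{Part 2 (main technical step).} Set $\alpha_n := s_n/s_n'$ and let $a\in[0,\infty]$ be an arbitrary subsequential limit. For fixed $c\neq 0$, choose $c_n := c\alpha_n$ along the relevant subsequence; if $a\in(0,\infty)$, then $c_n\to ac$ and applying~\ref{assump:continuity-of-f_{0,n}} for $\{s_n\}$ yields
\[
\sqrt{n/s_n}\bigl(f_{0,n}(x_0+c_n/s_n)-f_{0,n}(x_0)\bigr)\longrightarrow \psi(ac).
\]
Since $c_n/s_n = c/s_n'$ identically, the same quantity equals $\sqrt{s_n'/s_n}\cdot\sqrt{n/s_n'}(f_{0,n}(x_0+c/s_n')-f_{0,n}(x_0))$, which tends to $\tilde\psi(c)/\sqrt a$. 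Hence $\tilde\psi(c)=\sqrt a\,\psi(ac)$. The degenerate cases $a=0$ and $a=\infty$ are ruled out because they would force $\tilde\psi\equiv 0$ on $\mathbb{R}\setminus\{0\}$, contradicting Part~1 applied to $\tilde\psi$. If $\alpha_n$ had two distinct subsequential limits $a_1<a_2$ in $(0,\infty)$, the identity $\sqrt{a_1}\psi(a_1c)=\sqrt{a_2}\psi(a_2c)$ would hold for all $c$; writing $b=a_2/a_1>1$, this becomes $\psi(bc)=b^{-1/2}\psi(c)$, and iterating yields $\psi(b^kc)=b^{-k/2}\psi(c)\to 0$ as $k\to\infty$, contradicting the non-vanishing of $\psi$ at infinity from Part~1. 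Thus $\alpha_n$ has a unique limit $a\in(0,\infty)$ and $\tilde\psi(c)=\sqrt a\,\psi(ac)$; the equivalence $\tilde\psi\equiv\psi\iff a=1$ follows because a non-constant monotone $\psi$ cannot satisfy $\psi(c)=\sqrt a\,\psi(ac)$ for all $c$ unless $a=1$.

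\emph{Part 3.} Fix $t>0$. For each $x\in(0,t]$, applying~\ref{assump:continuity-of-f_{0,n}} to the sequence $xc_n\to xc$ gives
\[
g_n(x) := \sqrt{n/s_n}\bigl(f_{0,n}(x_0+xc_n/s_n)-f_{0,n}(x_0)\bigr)\longrightarrow \psi(xc).
\]
Monotonicity of $f_{0,n}$ implies $0\le g_n(x)\le g_n(t)$ for $c>0$, and $g_n(t)\to\psi(tc)$, so $\{g_n\}$ is uniformly bounded on $[0,t]$ for all large $n$. Dominated convergence then yields
\[
\sqrt{n/s_n}\int_0^t c\bigl(f_{0,n}(x_0+xc_n/s_n)-f_{0,n}(x_0)\bigr)\,dx \longrightarrow c\int_0^t \psi(xc)\,dx = \Psi(ct)
\]
after the substitution $u=xc$. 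The $t<0$ case follows by an identical argument with the appropriate sign handling.

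\emph{Part 4.} For $c>0$, $\Psi(c)/c$ is the average of $\psi$ over $[0,c]$, hence $\le\psi(c)$ by monotonicity, giving $\Gamma_1(c)\ge 0$; likewise $(\Psi(2c)-\Psi(c))/c$ is the average of $\psi$ over $[c,2c]$, hence $\ge\psi(c)$, giving $\Gamma_2(c)\ge 0$, with the $c<0$ cases being symmetric. For the divergence of $|c|\Gamma_2$, a direct integral estimate gives
\[
|c|\Gamma_2(c) = \int_c^{2c}(\psi(s)-\psi(c))\,ds \ge \int_{3c/2}^{2c}(\psi(3c/2)-\psi(c))\,ds = \tfrac{c}{2}(\psi(3c/2)-\psi(c)),
\]
so $\limsup_{|c|\to\infty}|c|\Gamma_2(c)\ge \tfrac12\limsup_{|c|\to\infty}|c(\psi(c)-\psi(3c/2))|=\infty$ by~\ref{assump:continuity-of-f_{0,n}}. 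For $\Gamma_1$, substituting $c'=2c/3$ (so $3c'/2=c$) and using a similar monotonicity-based lower bound gives $c\Gamma_1(c)\ge (2c/3)(\psi(c)-\psi(2c/3))$, which diverges along the same subsequence. The main obstacle is Part~2: carefully ruling out the limit points $a\in\{0,\infty\}$ and deriving a contradiction from two distinct finite limit points both rely on combining monotonicity with the non-vanishing of $\psi$ at infinity from Part~1, and must be handled with care when $\psi$ is permitted to take infinite values as in Example~\ref{exmp:locally-asymmetric}.
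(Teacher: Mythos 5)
Your proposal follows essentially the same route as the paper's proof in all four parts: monotonicity of $\psi$ plus the non-degeneracy condition $\limsup_{|c|\to\infty}|c(\psi(c)-\psi(3c/2))|=\infty$ for Part 1, the rescaling identity $c_n/s_n=c/s_n'$ with $c_n=cs_n/s_n'$ to derive $\tilde\psi(c)=\sqrt{a}\,\psi(ac)$ for Part 2, bounded convergence with the monotone endpoint bound for Part 3, and the same integral lower bounds $\tfrac{|c|}{2}|\psi(3c/2)-\psi(c)|$ and $\tfrac{2|c|}{3}|\psi(c)-\psi(2c/3)|$ for Part 4. The only real variations are your iteration argument $\psi(b^ku)=b^{-k/2}\psi(u)\to0$ for ruling out two distinct finite subsequential limits (the paper uses a one-step monotonicity comparison $\sqrt{a_1}\psi(a_1c_0)\ge\sqrt{a_1}\psi(a_0c_0)>\sqrt{a_0}\psi(a_0c_0)$ instead; both are valid), and the step you leave under-specified---ruling out $a\in\{0,\infty\}$---which the paper carries out via the sandwich $f_{0,n}\bigl(x_0+({c_{n_k}s'_{n_k}}/{s_{n_k}})/s'_{n_k}\bigr)\le f_{0,n}\bigl(x_0+c'_{n_k}/s'_{n_k}\bigr)$ combined with $\sqrt{s'_{n_k}/s_{n_k}}\to0$ to force $\psi(c_0)=0$, which is exactly the ``care'' you flag as needed.
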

\begin{remark}[Interplay of rate and limiting distribution]
Note that, from part 2 of Proposition \ref{prop:assumptions-implications}, we get:
\[
\tilde{\Psi}(t) = \mathbf{1}\{t \ge 0\}\int_0^t \tilde{\psi}(s)ds - \mathbf{1}\{t < 0\}\int_{t}^{0} \tilde{\psi}(s)ds=\frac{1}{\sqrt{a}}\Psi(at), \quad\mbox{for all}\quad t\in\mathbb{R}.
\]
Using the above and Remark \ref{rem:representation-of-limiting-distribution} one can conclude that with $\tilde{\mathcal{B}}_0(t) := B(t) + \tilde{\Psi}(h_0t/\sigma_0^2)$,
\begin{align*}
&\sqrt{\frac{n}{s'_n}}\frac{h_0}{\sigma^2_0}(\hat{f}_n(x_0)-f_{0,n}(x_0))~\overset{d}{\to}~\mathbf{slGCM}[\tilde{\mathcal{B}}_0(t):t\in\mathbb{R}](0)\\
\iff\quad &\sqrt{\frac{n}{s_n}}\frac{h_0}{\sigma^2_0}(\hat{f}_n(x_0)-f_{0,n}(x_0))~\overset{d}{\to}~\mathbf{slGCM}[\mathcal{B}_0(t):t\in\mathbb{R}](0).
\end{align*}
Thus, it does not matter whether we choose $s_n$ or $s_n'$ for the calculation of the limiting distribution as it is only a different representation of the same limit result.
\end{remark}
\subsubsection{Proof of Theorem~\ref{thm:asymptotic-distribution-of-LSE}}
We imitate the proof as in \cite{wright1981asymptotic} and slightly modify some key steps. For the sake of readability, we provide a self-contained proof. To summarize, Lemmas \ref{lem:smoothness-of-H}, \ref{lem:unequalness-of-fn-star-and-fhat} and \ref{lem:brownian-motion} are exactly as in~\cite{wright1981asymptotic}, though we give detailed arguments which are not present in the original paper.
The monotone regression estimator $\hat{f}_n$ has a closed form expression as follows,
\[
\hat{f}_n(x)=\max_{X_{i:n}\leq x}\min_{X_{j:n}\geq x} \text{Av}_n([X_{i:n},X_{j:n}]),
\]
where $\text{Av}_n((a,b])=\frac{1}{|\{k: X_{k:n}\in(a,b]\}|}\sum_{k: X_{k:n}\in(a,b]} Y_{[k:n]}.$
Fix $c>0$. Since $H'_n(x_0)>0$ and $H'_n$ is continuous in a neighborhood $\mathcal{N}(x_0)$ of $x_0$, $H'_n(x)>0$ in an open neighborhood $\mathcal{N}'(x_0)\subseteq \mathcal{N}(x_0)$ of $x_0$. Hence, for large enough $n$, one can choose $\alpha_l(n)>0$ and $\alpha_u(n)>0$ such that:
\[H(x_0)-H(x_0-\alpha_l(n))=H(x_0+\alpha_u(n))-H(x_0)=\frac{2c}{s_n}.\]
For notational simplicity, we shall write $\alpha_l(n)$ and $\alpha_u(n)$ as $\alpha_l$ and $\alpha_u$ respectively, keeping in mind that these factors depend on $n$.
Define the monotone regression estimator on the restricted set $(x_0-\alpha_l,~x_0+\alpha_u)$ as:
\[f_{n,c}^{*}(x_0)=\max_{i:x_0-\alpha_l<X_{i:n}\le x_0}~\min_{j:x_0\le X_{j:n}<x_0+\alpha_u}\text{Av}_n([X_{i:n},~X_{j:n}]).\]
The proof starts by showing that $f_{n,c}^*(x_0)$ and $\hat{f}_n(x_0)$ are equal with probability converging to one as $n\to\infty$ and $c\to\infty$ (in this order); this is done in Lemma~\ref{lem:prob-of-unequalness-goes-to-zero}. Hence, by Lemma 4.2 of~\cite{rao1969estimation}, it suffices to study the limiting distribution of $f_{n,c}^*(x_0)$ for any fixed $c > 0$. For this, we first start by noting that $f_{n,c}^*(\cdot)$ is the isotonic LSE for the data $\{(X_i, Y_i):\, 1\le i\le n, X_{i}\in(x_0 - \alpha_l, x_0 + \alpha_u)\}$. Let $\gamma_n$ denote the number of observations with $X_i\in(x_0-\alpha_l,~x_0+\alpha_u)$. Let the observation points in $(x_0-\alpha_l,~x_0+\alpha_u)$ be $\{(X'_{(1)},Y'_{[1:\gamma_n]}),\ldots,(X'_{(\gamma_n)},~Y'_{[\gamma_n:\gamma_n]})\}$. Following Lemma 2.1 of~\cite{groeneboom2001estimation}, $f_{n,c}^*(X_{k:n}')$ can be obtained from the CUSUM diagram as the slope from the left of the greatest convex minorant (GCM) of $(0, 0), (k/\gamma_n, \sum_{l=1}^{k} Y_{[l:\gamma_n]}'/\gamma_n), 1\le k\le \gamma_n$ at $k/\gamma_n$. Note that the GCM does not change if we consider the linear interpolation of the points $(0, 0), (k/\gamma_n, \sum_{l=1}^{k} Y_{[l:\gamma_n]}'), 1\le l\le \gamma_n$. Let us call that linear interpolation process $\tilde{U}_n(\cdot)$ on $[0, 1]$. The idea now to prove the limiting distribution of $f_{n,c}^*$ is to show that $\tilde{U}_n(\cdot)$ converges as a stochastic process to a drifted Brownian motion. For proper scaling, we work with the linear interpolation of scaled points.
Let $D_0$ be any fixed positive real number. Define, $t_{n0}=0$ and
\[t_{nk}=2cD_0\cdot\frac{k}{\gamma_n}~~k\in\{1, 2, \ldots, \gamma_n\}.\]
 Define a process on $[0,~2cD_0]$ by $U_n(0)=0$ and
\[
U_n(t_{nk})=2cD_0\cdot\frac{1}{\gamma_n}\sum_{l=1}^k Y'_{[l:\gamma_n]},\quad k\in\{1, 2, \ldots, \gamma_n\}.
\]
Define the process $t\mapsto U_n(t)$ on $[0, 2cD_0]$ as the linear interpolation of $\{(t_{nk}, U_n(t_{nk})): 1\le k\le \gamma_n\}$.
This is, mathematically, given by
\begin{equation}\label{eq:linear-interpolation-scaled}
U_n(t) := \frac{2cD_0}{\gamma_n}\left(\sum_{l=1}^{\lfloor \gamma_nt/(2cD_0)\rfloor} Y_{[l:\gamma_n]}' + \left[\frac{\gamma_nt}{2cD_0}-\left\lfloor \frac{\gamma_nt}{2cD_0}\right\rfloor\right]Y_{[\lfloor \gamma_nt/(2cD_0)\rfloor+1:\gamma_n]}'\right).
\end{equation}
As noted in Eq. (6) of~\cite{wright1981asymptotic},
\begin{equation}\label{eq:isotone-slGCM}
    \begin{split}
        &\text{\textbf{slGCM}}(U_n(t) - f_{0,n}(x_0)(t):\, t\in[0, 2cD_0])(t_{n(j_n - 1)})\\
        &\le f_{n,c}^*(x_0) - f_{0,n}(x_0)\\
        &\le \text{\textbf{slGCM}}(U_n(t) - f_{0,n}(x_0)(t):\, t\in[0, 2cD_0])(t_{nj_n}),
    \end{split}
\end{equation}
where $j_n \in \{2, 3, \ldots, \gamma_n\}$ is such that $X_{(j_n - 1)}' < x_0 \le X_{(j_n)}'$.
Because the GCM is scale equivariant, the GCM of $s\mapsto \tilde{U}_n(s)$ is $2cD_0$ times the GCM of $s = t/2cD_0 \mapsto U_n(2cD_0s)$. Hence, the slope from the left of the GCM of $s\mapsto \tilde{U}_n(s)$ at $s = k/\gamma_n$ is the same as the slope from the left of the GCM of $t\mapsto U_n(t)$ at $t = 2cD_0k/\gamma_n$, for any $c, D_0 > 0$. Lemma~\ref{lem:brownian-motion} shows that $t\mapsto U_n(t) - \mathbb{E}[U_n(t)|\mathcal{X}]$ at a rate of $\sqrt{n/s_n}$ converges as a process to the standard Brownian motion on $[0, 2cD_0]$. Lemma~\ref{lem:deterministic-part-final} proves that $t\mapsto \mathbb{E}[U_n(t)|\mathcal{X}] - f_{0,n}(x_0)t$ uniformly converges at a rate of $\sqrt{n/s_n}$ to a scaled version of $\Psi(\cdot)$.
Thus, from Lemmas \ref{lem:brownian-motion} and \ref{lem:deterministic-part-final}, we conclude that for any $c > 0$, as $n\to\infty$,
\begin{align*}
\Upsilon_n(t)&:=\sqrt{\frac{2n/s_n}{D_0\sigma^2_0}}(U_n(t)-f_{0,n}(x_0)t)\\
&\overset{d}{\to} B(t)+\sqrt{\frac{2}{D_0\sigma^2_0}}\frac{D_0h_0}{2}\cdot\left(-\Psi\left(-\frac{2c}{h_0}\right)+\Psi\left(-\frac{2c}{h_0}\left(1-\frac{t}{cD_0}\right)\right)\right),
\end{align*}
as processes on the domain $t\in[0, 2cD_0]$.
As constants do not influence the slope, and making the change of variable $s=t-cD_0$,
\begin{align*}
    &\text{\textbf{slGCM}}(\Upsilon_n(t):t\in[0,2cD])(t_{nj(n)})\\
    &=\text{\textbf{slGCM}}(\Upsilon_n(s+cD_0):s\in[-cD_0,~cD_0])(t_{nj(n)}-cD_0)\\
    &\overset{d}{\to} \text{\textbf{slGCM}}\left(B(s)+\sqrt{\frac{D_0}{2}}\dfrac{h_0}{\sigma_0}\Psi\left(\dfrac{2s}{h_0D_0}\right):\, s\in[-cD_0, cD_0]\right)(0),
\end{align*}
following the same argument given after Eq. (10) of~\cite{wright1981asymptotic}; also, see Section 4 of~\cite{leurgans1982asymptotic} and Proof of Theorem 2.1 of~\cite{banerjee2007likelihood}.
Thus, we get the form of the limiting distribution at least on $[-cD_0,~cD_0]$. Let
\[
g(s)=\sqrt{\frac{D_0}{2}}\dfrac{h_0}{\sigma_0}\Psi\left(\dfrac{2s}{h_0D_0}\right).
\]
To show that this quantity converges to the limit on $\mathbb{R}$, we modify Lemma 6.2 in~\cite{rao1969estimation}. The proof of the lemma implies that it is enough to show that both $\mathbb{P}(B(-s)> g(-s) \text{ for some }s>cD_0)$ and $\mathbb{P}(B(s)> g(s) \text{ for some }s>cD_0)\to 0$ as $c\to\infty$, where $B(s)$ denote the two-sided Wiener--L{\'e}vy process on $\mathbb{R}$. This is immediate by noting that: $B(s)/s\to0$ a.s. and both $\Psi(s)/s$ and $\Psi(-s)/s$ are bounded away from zero as $s\to\infty$, as shown in part 1 of Proposition~\ref{prop:assumptions-implications}. Hence, we conclude that:
\[
\sqrt{\frac{2n/s_n}{D_0\sigma^2_0}}(\hat{f}_n(x_0)-f_{0,n}(x_0))\overset{d}{\to}\text{\textbf{slGCM}}[\mathcal{B}(t):\,t\in\mathbb{R}](0),
\]
where $\mathcal{B}(t)=B(t)+\frac{h_0}{\sigma_0}\sqrt{\frac{D_0}{2}}\Psi\left(\frac{2t}{h_0D_0}\right)$.
Now, due to Remark \ref{rem:representation-of-limiting-distribution}, any value of $D_0$ would work.
 Choose $D_0$ such that the coefficient of $\Psi$ becomes 1, i.e., set $D_0={2\sigma^2_0}/{h^2_0}$.
 As a result, the limiting distribution becomes
\[
\frac{h_0}{\sigma^2_0}\sqrt{\frac{n}{s_n}}(\hat{f}_n(x_0)-f_{0,n}(x_0))\overset{d}{\to}\text{\textbf{slGCM}}[\mathcal{B}_0(t)](0),
\]
where $\mathcal{B}_0(t)=B(t)+\Psi\left(\frac{h_0t}{\sigma^2_0}\right)$.
This shows the result.
\section{Inference for Isotonic Regression}\label{sec:inference-HulC}
Inference for isotonic regression is a difficult problem for several reasons, even if we restrict ourselves to only the previously known asymptotic limiting distributions in~\cite{wright1981asymptotic}. Firstly, the local polynomial exponent $\beta$ is unknown in practice. Secondly, even if $\beta$ is known, the limiting distribution involves other non-parametric quantities such as the density of covariates at $x_0$ and the conditional variance at $x_0$. The estimation of these non-parametric quantities can be significantly challenging and require more assumptions on the data-generating process for ``good'' estimation. With the richer asymptotic theory as implied by Theorem~\ref{thm:asymptotic-distribution-of-LSE} the problem only became more difficult because instead of a single parameter $\beta$, we now have a function $\psi(\cdot)$ that is unknown.
As mentioned in the introduction, only three general methods of inference exist, of which two methods are designed for the result of~\cite{wright1981asymptotic}. Subsampling with an estimated rate of convergence~\citep{bertail1999subsampling} is the only general method that yields asymptotically valid confidence intervals under the assumptions of Theorem~\ref{thm:asymptotic-distribution-of-LSE} if $f_{0,n}(x_0)$ is unchanged as $n\to\infty$. However, it should be clarified here that subsampling may not maintain uniform validity under the triangular array setting that allows $f_{0,n}(x_0)$ to change with sample size.
\subsection{Asymptotically Valid Inference using HulC}
\label{sec:intro-hulc}
The general method of HulC allows for the construction of asymptotically valid confidence intervals by splitting the data into a fixed number of non-overlapping batches and computing the estimator on each batch. In this method, nothing more than the computation of the estimator is needed. The detailed steps to construct a $(1-\alpha)$ confidence interval are as follows:
\begin{enumerate}
    \item Generate $U$, a standard uniform random variable. Set $B_{\alpha} = \lceil\log_2(2/\alpha)\rceil$ and
    \[
    B^* = \begin{cases}
    B_{\alpha} - 1, &\mbox{if } U \le (\alpha - 2^{B_{\alpha}})/(2^{B_{\alpha} - 1} - 2^{B_{\alpha}}),\\
    B_{\alpha}, &\mbox{otherwise}.
    \end{cases}
    \]
    \item Split the data $(X_i, Y_i), 1\le i\le n$ randomly into $B^*$ non-overlapping batches of approximately equal sizes and compute the isotonic LSE on each batch to obtain $\widehat{f}_{n,\alpha}^{(j)}(x_0), 1\le j\le B^*$.
    \item Return the confidence interval
    \begin{equation}\label{eq:HulC-isotonic}
    \widehat{\mathrm{CI}}_{n,\alpha}(x_0) = \left[\min_{1\le j\le B^*}\widehat{f}_{n,\alpha}^{(j)}(x_0),\,\max_{1\le j\le B^*}\widehat{f}_{n,\alpha}^{(j)}(x_0)\right].
    \end{equation}
\end{enumerate}
Following~\cite{kuchibhotla2021hulc}, we define the median bias of $\widehat{f}_n(x_0)$ for $f_{0,n}(x_0)$, when the underlying data is obtained from distribution $P_n$ as
\begin{equation}
\label{eq:median-bias-definition-whole}
\medbias_{P_n}(\widehat{f}_{n}(x_0), f_{0,n}(x_0)) := \left(\frac{1}{2} - \min\left\{\mathbb{P}_{P_n}(\widehat{f}_{n}(x_0) \le f_{0,n}(x_0)), \mathbb{P}_{P_n}(\widehat{f}_n(x_0) \ge f_{0,n}(x_0))\right\}\right)_+.
\end{equation}
We subscript the probability by $P_n$ to signify that the probability is computed under the distribution $P_n$ of the underlying data.
In fact, Theorem 2 of~\cite{kuchibhotla2021hulc} proves that
\begin{equation}\label{eq:HulC-miscoverage-bound}
\mathbb{P}_{P_n}\left(f_{0,n}(x_0) \notin \widehat{\mathrm{CI}}_{n,\alpha}(x_0)\right) ~\le~ \alpha\left(1 + 2(B_{\alpha}\Delta)^2e^{2B_{\alpha}\Delta}\right),
\end{equation}
where
\begin{equation}\label{eq:median-bias-definition}
\Delta \equiv \Delta_{n,\alpha} := \max_{1\le j\le B_{\alpha}}\medbias_{P_n}(\widehat{f}_{n,\alpha}^{(j)}(x_0),\,f_{0,n}(x_0)).
\end{equation}
Because $B_{\alpha}$ is independent of $n$, $\Delta_{n,\alpha}$ converging to zero as $n\to\infty$ implies that the HulC confidence interval~\eqref{eq:HulC-isotonic} has an asymptotic miscoverage error less than or equal to $\alpha$.
The rate at which $\Delta_{n,\alpha}$ converges to zero depends on how fast the finite sample distribution of the isotonic LSE reaches the limiting distribution. Some results in this direction have been obtained recently in~\cite{han2022berry} in the pointwise setting (i.e., same distribution as $n$ changes).
\paragraph{Effect of data splitting}
In the HulC construction, each $\widehat f^{(j)}_{n,\alpha}(x_0)$ is the isotonic LSE computed on a batch of size of order $n$ (more precisely, $\lfloor n/B^*\rfloor$ or $\lceil n/B^*\rceil$). Hence, the asymptotic distribution of each batch estimator is obtained by applying Corollary~\ref{cor:triangular-asymptotic-distribution-of-LSE} with $k_n=\lfloor n/B^*\rfloor$. Note that for the examples discussed in Section \ref{sec:applications-asymptotic-distribution-of-LSE}, $\lim_{n\to\infty}s_{k_n}/s_n = (\lim_{n\to\infty} k_n/n)^{1/(2\theta+1)} = (1/B^*)^{1/(2\theta+1)}$, which is a constant in $(0,\infty)$. In particular, if we can show that for each fixed $j$,
\[
\sqrt{\frac{k_n}{s_{k_n}}}\frac{h_0}{\sigma_0^2}\Big(\widehat f^{(j)}_{n,\alpha}(x_0)-f_{0,n}(x_0)\Big)~
\text{converges to a symmetric distribution},
\]
then, since the above limit has median zero,
\[
\medbias_{P_n}\!\left(\widehat f^{(j)}_{n,\alpha}(x_0),\,f_{0,n}(x_0)\right)\to 0
\quad\text{and hence}\quad
\Delta_{n,\alpha}\to 0,
\]
which together with~\eqref{eq:HulC-miscoverage-bound} yields asymptotic validity of $\widehat{\mathrm{CI}}_{n,\alpha}(x_0)$.\\

\noindent Consider the premise of Assumptions A and B in \cite{han2022berry} which are a stronger version of Wright's assumptions described in Example~\ref{exmp:Wright-1981}.
\begin{assumption}
    \label{assump:han-kato}
    Suppose $x_0\in(0,1)$ and let $X_1,X_2,\ldots,X_n$ be i.i.d. with distribution $H$ which has a density $h$ on $[0,1]$ that is continuous around $x_0$ and is bounded away from $0$ on $[0, 1]$. Further assume that for some $1<\beta<\infty$ and for all $x \in [0, 1]$,
    \[h(x) - h(x_0) = B(x-x_0)^\beta + R_h((x-x_0)^\beta)\]
    Let $\theta,\theta^\ast\in(0,\infty)$ be the first and second non-vanishing derivatives of $f_{0,n}\equiv f_0$ (fixed function) and we assume the following Taylor expansion holds for the function $f_0$, for all $x\in[0,1]$:
    \[f_0(x) = f_0(x_0) + A(x-x_0)^\theta\sign(x-x_0) +A^\ast(x-x_0)^{\theta^\ast}\mathrm{sgn}(x-x_0) + R((x-x_0)^{\theta^\ast})\]
    Here, $A, A^\ast, B$ are constants and $R, R_h:\R\to\R$ are functions such that $R(0)=R_h(0)=0$ and $\max\{R_h(\epsilon), R(\epsilon)\} = o(\epsilon),$ as $\epsilon\to 0$.
\end{assumption}
\noindent Under assumption \ref{assump:han-kato}, Theorem 2.2 of~\cite{han2022berry} proves that
\begin{align*}
&\sup_{u\in\mathbb{R}}\left|\mathbb{P}\left(n^{\theta/(2\theta+1)}c_\theta(\widehat{f}_{n}(x_0) - f_{0,n}(x_0)) \le u\right) - \mathbb{P}\left(\slGCM[\mathcal{B}(t):\,t\in\mathbb{R}](0) \le u\right)\right|\\
&\le k_1\max\left\{n^{-\theta/(2\theta + 1)}\log n,\,n^{-(\theta^* - \theta)/(2\theta+1)}, n^{-\beta/(2\theta + 1)}\right\}(\log n)^{k_2},
\end{align*}
where $k_1, k_2, c_\theta$ are constants independent of $n$ and $\mathcal{B}(t) = B(t) + |t|^{\alpha+1}$. Note that the same bound also holds for $\widehat{f}_{n,\alpha}^{(j)}(x_0)$ because the sample size for $\widehat{f}_{n,\alpha}^{(j)}$ is of the order of $n$ (for any fixed $\alpha$).
This implies that
\begin{align}
\label{eq:med-bias-berry-essen}
\mbox{Med-Bias}_{P}(\widehat{f}_n(x_0),\,f_0(x_0)) &\le ~ \mbox{Med-Bias}(\slGCM[\mathcal{B}(t):t\in\mathbb{R}](0),\,0) \\
&+ k_1\max\left\{n^{-\theta/(2\theta + 1)}\log n,\,n^{-(\theta^* - \theta)/(2\theta+1)}, n^{-\beta/(2\theta + 1)}\right\}(\log n)^{k_2}.
\end{align}
Hence, if
\begin{equation}\label{eq:limit-median-bias-zero}
\mbox{Med-Bias}(\slGCM[\mathcal{B}(t):t\in\mathbb{R}](0),\,0) = 0,
\end{equation}
then $\Delta_{n,\alpha}\to 0$ as $n\to\infty$.
Indeed, assuming~\eqref{eq:limit-median-bias-zero}, $\theta^* > 2\theta$, and $\beta > \theta$, we get that $\Delta_{n,\alpha} = O((n/B_{\alpha})^{-\theta/(2\theta + 1)})$ as $n\to\infty$ and hence,~\eqref{eq:HulC-miscoverage-bound} coupled with Corollary \ref{cor:triangular-asymptotic-distribution-of-LSE} yields
\[
\mathbb{P}\left(f_{0,n}(x_0) \notin \widehat{\mathrm{CI}}_{n,\alpha}(x_0)\right) ~\le~ \alpha\left(1 + O(1)B_{\alpha}^2(n/B_{\alpha})^{-2\theta/(2\theta+1)}\right),\quad\mbox{for all large enough }n.
\]
Therefore, the main assumption to verify to ensure validity of HulC confidence intervals is~\eqref{eq:limit-median-bias-zero}. It is easy to see that distributions symmetric around zero satisfy~\eqref{eq:limit-median-bias-zero}.
Motivated by this, we study general conditions in section \ref{subsec:symmetry-condtions} under which this symmetry property holds.
Under the setting of Theorem~\ref{thm:asymptotic-distribution-of-LSE}, Berry--Esseen bounds are non-existent and are of great interest in understanding the miscoverage error of HulC-type confidence intervals.
\paragraph{Width of Confidence Intervals.} Under the assumptions of Theorem~\ref{thm:asymptotic-distribution-of-LSE}, we know that $\widehat{f}_{n,\alpha}^{(j)}(x_0) - f_{0,n}(x_0), 1\le j\le B_{\alpha}$ all converge at a $\sqrt{n/(B_{\alpha}s_n)}$
rate to the limiting distribution. Hence, the maximum and the minimum of these estimators also converge at the same rate of $\sqrt{n/(B_{\alpha}s_n)}.$ This implies that
\[
\mbox{Width}(\widehat{\mathrm{CI}}_{n,\alpha}(x_0)) = O_p\left(\sqrt{\frac{s_n}{n}}\right)\quad\mbox{as}\quad n\to\infty.
\]
Because the confidence intervals attain this width without the knowledge of $s_n$ or $\psi$, they are adaptive and asymptotically valid confidence intervals. Construction of optimal adaptive confidence bands for monotone regression function is available in~\cite{dumbgen2003optimal}; also, see~\cite{bellec2018sharp} and~\cite{yang2019contraction}.
\subsection{Conditions for the Symmetry of Limiting Distributions}
\label{subsec:symmetry-condtions}
In this section, we show that the limiting distribution of the isotonic LSE is a continuous symmetric distribution if $\psi(\cdot)$ is an odd function (i.e., $\psi(-c) = -\psi(c)$, or equivalently, $\Psi(-c) = \Psi(c)$ for $c\ge0$). This allows for the validity of HulC confidence intervals~\cite{kuchibhotla2021hulc}. Some of the salient features of HulC are that (1) the methodology is completely data-driven when the limiting distribution has a zero median; (2) the miscoverage error converges to $\alpha$ even in the relative error, so that even with union bound over a large number of $x_0$.
In fact, we prove a general result that implies the symmetry of the distribution of minimizers of drifted Brownian motion, $\mathcal{B}(t)$, which implies the result for the subclass of limiting distributions of isotonic LSE. In passing, we note that the derivation of a necessary and sufficient condition for symmetry of the minimizers of a general stochastic process is an interesting open problem.
The following result studies the behavior of the slope from the left of the greatest convex minorant as well as the slope from the left of the least concave minorant of a drifted symmetric stochastic process with an arbitrary even drift function.
\begin{theorem}\label{thm:general-drifted-brownian-motion}
Suppose $d:\mathbb{R}\to\mathbb{R}$ be a non-stochastic continuous function such that $d(t)/|t|$ is bounded away from zero as $|t|\to\infty$ and
\begin{enumerate}[label=(AS\arabic*)]
    \item \label{assump:symmetry-even}$d$ is even, i.e, $d(t)=d(-t)$ for all $t\in\mathbb{R}$.
    \item \label{assump:symmetry:continuity}$|d(t_0+t)-d(t_0)|/\sqrt{t}\to 0$ as $t\downarrow 0~~\forall~t_0\in\mathbb{R}$.
\end{enumerate}
Consider two drifted Brownian motions $\mathcal{B}_1(t):=B_1(t)+d(t)$ and $\mathcal{B}_2(t):=B_2(t)-d(t)$, where $B_1(\cdot), B_2(\cdot)$ are independently generated standard two-sided Brownian motions on $\mathbb{R}$.
Fix any $t_0\in\mathbb{R}$. Let $\slGCM[\mathcal{B}_1](t_0)$ denote the slope from the left at $t_0$ of the greatest convex minorant of $\mathcal{B}_1(\cdot)$. Similarly, let $\slLCM[\mathcal{B}_2](t_0)$ denote the slope from the left at $t_0$ of the least concave majorant of $\mathcal{B}_2(\cdot)$, evaluated at $t_0$. Then, $\slGCM[\mathcal{B}_1](t_0), \slLCM[\mathcal{B}_2](-t_0)$ have well-defined representations in terms of the argmin functional and are continuous random variables. Also,
\[
\slGCM[\mathcal{B}_1](t_0)~\overset{d}{=}~\slLCM[\mathcal{B}_2](-t_0).
\]
In particular, for $t_0=0$,
\[
\slGCM[\mathcal{B}_1](0)~\overset{d}{=}~\slLCM[\mathcal{B}_2](0)~\overset{d}{=}-\slGCM[\mathcal{B}_1](0).
\]
\end{theorem}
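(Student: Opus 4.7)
The plan is to reduce both sides to argmin/argmax functionals via Groeneboom's switching relation and then use the symmetry of two-sided Brownian motion together with the evenness of $d$. As a preliminary, for every $a\in\mathbb{R}$ I would verify that
\[
T_a \;:=\; \argmin_{t\in\mathbb{R}}\bigl(\mathcal{B}_1(t)-at\bigr), \qquad \tilde T_a \;:=\; \argmax_{t\in\mathbb{R}}\bigl(\mathcal{B}_2(t)-at\bigr)
\]
are almost surely uniquely attained and finite. Finiteness follows because the law of the iterated logarithm gives $B_i(t)/|t|\to 0$ a.s.\ while $d(t)/|t|$ is bounded away from $0$ as $|t|\to\infty$, so $\mathcal{B}_1(t)-at\to+\infty$ and $\mathcal{B}_2(t)-at\to-\infty$ on both sides. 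Uniqueness uses the classical fact that two-sided Brownian motion a.s.\ attains no two local minima at the same level, combined with assumption \ref{assump:symmetry:continuity}, which forces $d$ to be locally $o(\sqrt{t})$ and hence negligible relative to the Brownian fluctuations at any candidate minimizer. With these in hand, Groeneboom's switching identity yields
\[
\slGCM[\mathcal{B}_1](t_0)\le a \iff T_a\ge t_0, \qquad \slLCM[\mathcal{B}_2](-t_0)\le a \iff \tilde T_a\le -t_0,
\]
which supplies both the argmin representations ($\slGCM[\mathcal{B}_1](t_0)=\sup\{a:T_a<t_0\}$, and analogously for $\slLCM$) and continuity of the laws (since $a\mapsto T_a$ is strictly increasing on a full measure event under (AS2)).

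The main step is a change of variable $t=-s$. Using $d(-s)=d(s)$ from \ref{assump:symmetry-even},
\[
T_a \;=\; -\argmin_{s}\bigl(B_1(-s)+d(s)+as\bigr).
\]
The process $s\mapsto B_1(-s)$ is itself a standard two-sided Brownian motion, hence has the same law as $B_2$, and since $-B_2\stackrel{d}{=}B_2$ as processes we obtain
\begin{align*}
-T_a &\stackrel{d}{=} \argmin_{s}\bigl(B_2(s)+d(s)+as\bigr)\\
&= \argmax_{s}\bigl(-B_2(s)-d(s)-as\bigr)\\
&\stackrel{d}{=} \argmax_{s}\bigl(B_2(s)-d(s)-as\bigr) \;=\; \tilde T_a.
\end{align*}
Therefore $\Pr(T_a\ge t_0)=\Pr(-\tilde T_a\ge t_0)=\Pr(\tilde T_a\le -t_0)$, and inserting this into the switching identities gives $\Pr(\slGCM[\mathcal{B}_1](t_0)\le a)=\Pr(\slLCM[\mathcal{B}_2](-t_0)\le a)$ for every $a$, which is the claimed equality in distribution.

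For the $t_0=0$ addendum, I would invoke the elementary identity $\slLCM[W](0)=-\slGCM[-W](0)$, valid for any continuous $W$, because the LCM of $W$ equals the negative of the GCM of $-W$. Applied with $W=\mathcal{B}_2=B_2-d$, this gives $\slLCM[\mathcal{B}_2](0)=-\slGCM[-B_2+d](0)$, and since $-B_2\stackrel{d}{=}B_1$ as processes, $-B_2+d\stackrel{d}{=}\mathcal{B}_1$, so $\slLCM[\mathcal{B}_2](0)\stackrel{d}{=}-\slGCM[\mathcal{B}_1](0)$. Combining with $\slGCM[\mathcal{B}_1](0)\stackrel{d}{=}\slLCM[\mathcal{B}_2](0)$ yields the symmetry $\slGCM[\mathcal{B}_1](0)\stackrel{d}{=}-\slGCM[\mathcal{B}_1](0)$.

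The hard part will be the preliminary almost-sure uniqueness of $T_a$ and $\tilde T_a$, which is what upgrades the switching identity from a statement about ranges of slopes to a genuine distributional identity. The growth hypothesis on $d$ rules out escape to $\pm\infty$, while \ref{assump:symmetry:continuity} is precisely the regularity needed so that the local behaviour of $\mathcal{B}_1(t)-at$ near any candidate minimizer is governed by the Brownian term; after that, every remaining step is routine manipulation of distributional identities and classical symmetries of Brownian motion.
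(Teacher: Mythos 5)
Your overall strategy coincides with the paper's: reduce both slopes to argmin/argmax functionals via the switching relation, apply the reflection $t\mapsto -s$ together with $\{-B_1(-s)\}\overset{d}{=}\{B_2(s)\}$ and the evenness of $d$, and obtain the $t_0=0$ symmetry from $\slLCM[W](0)=-\slGCM[-W](0)$ with $-B_2\overset{d}{=}B_1$. The one place you genuinely diverge is that you run the switching argument directly on $\mathbb{R}$, and your preliminary claim there has a real gap: you assert that $T_a=\argmin_{t}(\mathcal{B}_1(t)-at)$ is a.s.\ finite for \emph{every} $a\in\mathbb{R}$ because $d(t)/|t|$ is bounded away from zero. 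That hypothesis only guarantees (at least) linear growth of $d$, not superlinear growth: if, say, $d(t)=|t|$ for large $|t|$ and $a=2$, then $\mathcal{B}_1(t)-at=B_1(t)+|t|-2t\to-\infty$ as $t\to+\infty$, so the infimum is not attained and $T_a$ is not defined. Hence the switching identity cannot be invoked on all of $\mathbb{R}$ for all $a$ as written. The paper sidesteps this by introducing the restricted variables $Y_i^c$ on $[-c,c]$, where $\argminp$ and $\argmaxp$ are always well defined, proving the distributional identity there, and then showing $\mathbb{P}(Y_i\neq Y_i^c)\to0$ as $c\to\infty$ (this limit is exactly where the growth condition on $d$ is used, via the modified Lemma 6.2 of Rao). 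To repair your version you would either need to adopt this truncation, or restrict attention to $a$ in the closure of the attainable slope range and check that the two CDFs agree trivially outside it.

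Two smaller points. First, you attribute a.s.\ uniqueness of the minimizer to the fact that Brownian motion has no two local minima at the same level ``combined with (AS2)''; the relevant tool is Lemma 2.6 of Kim and Pollard, which gives uniqueness for $B+d$ with any deterministic continuous $d$ on a $\sigma$-compact set and does not use \ref{assump:symmetry:continuity} at all. Second, \ref{assump:symmetry:continuity} is what delivers continuity of the law of the slope (the paper invokes Lemma SA-1 of Cattaneo et al.\ for this); your one-line claim that ``$a\mapsto T_a$ is strictly increasing on a full-measure event'' is the right intuition but is not yet an argument, since $a\mapsto T_a$ is always non-decreasing and the issue is ruling out, for each fixed $a$, a positive-probability event on which $T_{a'}<t_0\le T_a$ for all $a'<a$. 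With the truncation restored and these two points referenced properly, your argument becomes the paper's proof.
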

The first assumption \ref{assump:symmetry-even} is needed to ensure symmetry while \ref{assump:symmetry:continuity}  is needed to ensure that the distribution functions are continuous. In fact, \ref{assump:symmetry:continuity} has been borrowed from Lemma SA-1 in \cite{cattaneo2023bootstrap}, which ensures continuity of the distribution function. A detailed proof is presented in~\ref{appsec:proof-of-thm-general-drifted-brownian-motion}. The proof is obtained by combining the switching relations~\citep[Lemma 3.2, Chap. 3]{groeneboom2014nonparametric} and the symmetry properties of the Brownian motion. The uniqueness and continuous distribution parts follow from the results of~\cite{kim1990cube} and~\cite{cattaneo2023bootstrap}, respectively.
It is easy to check that in our setup, $\Psi$ as defined in \eqref{eq:definition-Psi} is continuous and already satisfies \ref{assump:symmetry:continuity} whenever $\psi$ is finite on $\mathbb{R}$. Because the limiting distributions of the isotonic LSE are slopes from the left of the greatest convex minorants of drifted Brownian motions, Theorem~\ref{thm:general-drifted-brownian-motion} implies that the limiting distributions are continuous with respect to the Lebesgue measure and are symmetric around zero, whenever $\Psi(-t) = \Psi(t)$ for all $t\in\mathbb{R}$
. Isotonic LSE is only one of the many applications of Theorem~\ref{thm:general-drifted-brownian-motion}. Several non- and semi-parametric models involving monotonicity constraints have limiting distributions that are expressed in terms of the slope from the left of a GCM. For example, isotonic $M$-estimators in non-parametric regression have limiting distributions of this form, as proved in~\cite{wright1984asymptotic,alvarez2012m}. Smoothly weighted linear combinations of order statistics also have $\slGCM$ distributions~\citep[Corollary 3.2]{leurgans1982asymptotic}. Pointwise limiting distributions of the non-parametric monotone density estimator are also of the $\slGCM$ form as shown in Section 4 of~\cite{anevski2006general}; also, see Eq. (10) of~\cite{anevski2002monotone}. \cite{anevski2006general} also show that isotonic LSE under some classes of dependent data also have limiting distributions of the $\slGCM$ form. Several other examples involving monotone non-parametric functions can be found in~\citet[Section 3]{deng2021confidence} and~\citet[Section 3]{westling2020unified}. In all these examples, Theorem~\ref{thm:general-drifted-brownian-motion} applies and provides conditions for symmetry of the limiting distributions. Because most of the literature cited here only focuses on the type of assumption from~\cite{wright1981asymptotic} as discussed in our Example~\ref{exmp:Wright-1981}, all these limiting distributions are continuous and symmetric about $0$. We strongly believe that extensions analogous to Theorem~\ref{thm:asymptotic-distribution-of-LSE} are possible and show a much richer limit theory for all these problems as well. As described before, the symmetry of the limiting distribution around 0 implies that $f_{0,n}(x_0)$ is the asymptotic median of $\widehat{f}_{n}(x_0)$ and allows one to construct asymptotically valid confidence intervals requiring no additional estimation. In particular, one does not need to estimate $h_0, \sigma_0^2$, and $\psi(\cdot)$ in order to perform inference as long as $\Psi(-t) = \Psi(t)$ for all $t\in\mathbb{R}.$
\begin{remark}[Necessity of $\psi(\cdot)$ to be odd]
    Note that the condition $\psi(\cdot)$ is odd a.s. is the necessary and sufficient condition for $\Psi(\cdot)$ to be an even function (i.e., $\Psi(-t) = \Psi(t)$). It is worth pointing out that the median-bias being zero does not impose any continuity conditions on $\psi(\cdot)$. In particular, $\psi(\cdot)$ can be a discontinuous function (e.g., $\psi(c) = c\mathbf{1}\{|c| \ge 1\}$). Moreover, by definition $\Psi(\cdot)$ is the integral of $\psi(\cdot)$ and hence, is an absolutely continuous function.
    We do not know if the condition $d(-t) = d(t)$ for all $t\in\mathbb{R}$ of Theorem~\ref{thm:general-drifted-brownian-motion} is a necessary condition for the symmetry of $\slGCM$ at zero. It is an interesting open problem to explore necessary and sufficient conditions of $d(\cdot)$ for the symmetry or zero median of $\slGCM$. {From our plots of the limiting distributions in Example~\ref{exmp:locally-asymmetric}, it is clear that the limiting distribution can be asymmetric if $\Psi(t) \neq \Psi(-t)$ for some $t\in\mathbb{R}$.}
\end{remark}
\subsection{Uniform Validity of HulC Confidence Intervals}\label{subsec:uniform-ci}
\noindent In this section, we establish that, under a triangular-array setup, the validity of HulC-type confidence intervals holds more generally—uniformly over a family of joint distributions on $(X,Y)$ and a class of monotone functions $\mathcal{F}$, under a much less restrictive assumption than \ref{assump:continuity-of-f_{0,n}}, without requiring the existence of a limiting distribution. Suppose we have the following data generating procedure: $(X, Y)$ is a random vector satisfying $Y = f(X) + \xi$ for a monotone non-decreasing function $f\in\mathcal{F}$ and error $\xi$. To account for uniformity, we would need to slightly modify the assumptions \ref{assump:data-model-assumption} and \ref{assump:continuous-distribution-of-covariates} as follows. Let $\mathcal{H},\mathcal{E}$ be families of distributions for $X$ which satisfy the following assumptions:
\begin{enumerate}[label=(UA\arabic*)]
    \item The cumulative distribution function, $H$, of $X$ is in some class $\mathcal{H}$. Assume the class $\mathcal{H}$ is equicontinuous on $\mathbb{R}$ and is continuously differentiable in a neighborhood of $x_0$ with $H'(x_0) = h(x_0)>0$. Formally, there exists a neighborhood
     $\mathcal{N}(x_0)$ (independent of $H\in\mathcal{H}$ and $n\ge1$) such that \label{assump:continuous-distribution-of-covariates-uniform}
     \[
     \limsup_{n\to\infty}\sup_{H\in\mathcal{H},~x\in\mathcal{N}(x_0):\,|x - x_0| \le \delta_n}|h(x) - h(x_0)| = 0,
     \]
     \[
     \inf_{H\in\mathcal{H},~x\in\mathcal{N}(x_0)}h(x) > 0,
     \]
     for any sequence $\{\delta_n\}_{n\ge1}$ such that $\delta_n\to0$ as $n\to\infty$.
    \item  The error $\xi$ satisfies $\mathbb{E}[\xi|X] = 0$ and $\mathbb{E}[\xi^2|X] = \sigma^2(X) \le \overline{\sigma}^2$. Moreover, the class of functions $\mathcal{E}=\{\sigma^2(\cdot): \sigma^2(x) = \E[\xi^2\mid X=x]\}$ is equicontinuous in a neighborhood of $x_0$, i.e., there exists a neighborhood $\mathcal{S}(x_0)$ (independent of $n$) such that\label{assump:data-model-assumption-uniform}
    \[
    \limsup_{n\to\infty}\sup_{\sigma^2\in \mathcal{E},~x\in\mathcal{S}(x_0):|x - x_0| \le \delta_n}|\sigma^2(x) - \sigma^2(x_0)| = 0,
    \]
    \[
    \inf_{\sigma^2\in \mathcal{E}}\sigma^2(x_0) = \sigma_0^2 \in (0, \infty),
    \]
    for any sequence $\{\delta_n\}_{n\ge1}$ such that $\delta_n\to0$ as $n\to\infty$.
\end{enumerate}
Define the family of joint distributions $\mathcal{P}(f; x_0)\equiv \mathcal{P}(f)$ (depending on $f\in\mathcal{F}$) for $(X,Y)$ as:
\begin{align}
\label{eq:general-class}
    \mathcal{P}(f) = \{P_{X,Y}: Y = f(X) + \xi, ~X\sim H~\text{satisfies \ref{assump:continuous-distribution-of-covariates-uniform} and }\xi~\text{satisfies \ref{assump:data-model-assumption-uniform}}\}.
\end{align}
\noindent Consider the setup as in Section \ref{sec:intro-hulc}, where we split the data into $B^*$ batches and compute the isotonic LSE $\hat{f}^{(j)}_{n, \alpha}(x_0)$ on each of them. We begin by introducing a useful unbiasedness property, which will be central to establishing uniform validity.
\begin{definition}[Property $\mathcal{U}$]
 We say that the class of functions $\mathcal{F}$ has the median-unbiasedness property $\mathcal{U}$ if for any sequence $\{f_{0,n}\}_{n\geq 1}\subseteq \mathcal{F}$ and $P_n\in\mathcal{P}(f_{0,n})$, the maximum median bias of the isotonic LSE estimators, computed on each split of the data, as defined by $\Delta_{n,\alpha}$ in \eqref{eq:median-bias-definition}, satisfies:
\[\limsup_{n\to\infty} \Delta_{n,\alpha} \to 0.\]
\end{definition}
\begin{lemma}
\label{lem:unif-validity-under-property} Suppose $\mathcal{F}$ is a class with Property $\mathcal{U}$. Let $\widehat{\mathrm{CI}}_{n,\alpha}(x_0)$ be the HulC confidence interval based on the isotonic LSE estimators for an i.i.d. sample $\{(X_i,Y_i)\}_{i\in[n]}\sim P\in\mathcal{P}(f)$. Then, we have uniform validity:
    \[\liminf_{n\to\infty}  \inf_{f\in\mathcal{F}} \inf_{P\in\mathcal{P}(f)} \Pr_P\left(f(x_0)\in \widehat{\mathrm{CI}}_{n,\alpha}(x_0)\right)=1-\alpha\]
\end{lemma}
A proof is presented in~\ref{appsec:proof-of-unif-validity-under-property}.
Next, we explore sufficient conditions under which property $\mathcal{U}$ holds, and simultaneously relax assumption \ref{assump:continuity-of-f_{0,n}} to include function classes $\mathcal{F}$ and joint distributions $\mathcal{P}$ on $(X,Y)$ such that the isotonic LSE does not have a limiting distribution. First, for a monotone function $f$, any $x_0\in\mathbb{R}$ and a rate function $\rho:(0,\infty)\to(0,\infty)$
satisfying $\rho(h)\to 0$ as $h\downarrow 0$, we define the local drifts by
\[
\psi_{f,\, \rho}(c, h)\ :=\ \frac{f(x_0+ch)-f(x_0)}{\rho(h)},\qquad c\in\mathbb{R},\ \ 0<h\le 1.
\]
If assumption~\ref{assump:continuity-of-f_{0,n}} holds true for $f_{0,n}$ along some sequence $\{s_n\}$, then taking $h_n:=1/s_n$ and
$\rho(h_n):=\sqrt{s_n/n}$ yields $\psi_{f_{0,n}, \,\rho}(c_n, h_n)\to\psi(c)$ for some non-decreasing function
$\psi(\cdot)$ for all $c_n\to c$.
This implies that $\psi_{f_{0,n}, \,\rho}(c, h)$ is eventually bounded on bounded sets whenever $\psi(\cdot)$ is bounded on
bounded sets.
Conversely, if one is only given that the family $\{\psi_{f_{0,n}, \,\rho}(\cdot, h):0<h\le 1\}$ is bounded on bounded sets,
there may not exist a limiting distribution for $\hat f_n(x_0)-f_{0,n}(x_0)$, but by Helly's selection theorem (\cite{brunk1956some}, Theorem~2)
one can get convergence in distribution via subsequences.
\begin{proposition}
    \label{prop:med-unb-unif-validity}
Consider the following set of sufficient assumptions on a class $\mathcal{F}$.\\
For every $f\in\mathcal{F}(x_0)$, assume there exist a rate function $\rho_f:(0,\infty)\to(0,\infty)$ such that,
\begin{enumerate}[label=(S\arabic*)]
    \item For every $C\ge0$, there exists $\mathfrak{B}_C \in (0, \infty)$ such that\label{assump:locally-bounded}
    \[
    \sup_{f\in\mathcal{F}(x_0)}\ \sup_{0<h\le 1}\ \sup_{c\in[-C, C]}|\psi_{f,\,\rho_f}(c, h)|
    \le \mathfrak{B}_C
    \quad\mbox{for all}\quad C\ge0.
    \]
    \item For any $c>0$,\label{assump:locally-symmetric}
    \[
    \sup_{f\in\mathcal{F}(x_0)}\left|\limsup_{h\downarrow 0}\,\frac{\psi_{f,\,\rho_f}(c, h)}{\psi_{f,\,\rho_f}(-c, h)} +1
    \right| = 0.
    \]
    \item \label{assump:tail-nonconstancy}
    \[
    \lim_{C\to\infty}\
    \sup_{f\in\mathcal{F}(x_0)}\ \sup_{0<h\le 1}\ \sup_{|c|>C}\
    \frac{1}{\big|c\big(\psi_{f,\,\rho_f}(3c/2, h)-\psi_{f,\,\rho_f}(c, h)\big)\big|}
    \ =\ 0.
    \]
\end{enumerate}
 Under assumptions~\ref{assump:continuous-distribution-of-covariates-uniform},~\ref{assump:data-model-assumption-uniform},~\ref{assump:locally-bounded},~\ref{assump:locally-symmetric}, and~\ref{assump:tail-nonconstancy}, $\mathcal{F}$ has property $\,\mathcal{U}$ and hence, uniform validity holds for HulC-type confidence intervals.
\end{proposition}
\noindent A proof is presented in~\ref{appsec:proof-of-med-unb-unif-validity}. As an example, consider the following class which admits a Taylor expansion similar to Wright's assumption:
\begin{exmp}
    Let $\mathcal{F}\equiv\mathcal{F}(x_0, \underline{\theta},\bar{\theta},\underline{A},\bar{A})$ be the following class of monotone functions:
\begin{align}
    \mathcal{F}=\{f:&f~\text{is monotone and }\exists~\theta_f\in[\underline{\theta},\bar{\theta}]>0, A_f\in[\underline{A},\bar{A}]>0,~\text{such that}\\
    &~f(x) = f(x_0) + A_f|x-x_0|^{\theta_f}\sign(x-x_0) + R_f(|x-x_0|^{\theta_f})\}
    \label{eq:3-3-class}
\end{align}
Here, $R_f:\R\to\R$ is such that $R_f(0)=0~\forall~f\in\mathcal{F}$ and $\sup_{f\in\mathcal{F}} |R_f(\epsilon)|= o(\epsilon)$ as $\epsilon\to 0$.\\
\noindent It is easy to verify that the class $\mathcal F$ defined above satisfies the assumptions in
Proposition~\ref{prop:med-unb-unif-validity}. Indeed, for each $f\in\mathcal F$ take
$\rho_f(h)=h^{\theta_f}$. Then, uniformly over $f\in\mathcal F$ and $c$ in bounded sets,
\[
\psi_{f,\,\rho_f}(c,h)=\frac{f(x_0+ch)-f(x_0)}{h^{\theta_f}}
= A_f|c|^{\theta_f}\sign(c) + o(1)\qquad (h\downarrow 0),
\]
and the tail non-constancy condition follows since
$|c(\psi_{f,\, \rho_f}(3c/2)-\psi_{f,\, \rho_f}(c))|\asymp |c|^{\theta_f+1}\to\infty$. Thus, using Proposition \ref{prop:med-unb-unif-validity}, we get,
\[\liminf_{n\to\infty} \inf_{f\in\mathcal{F}} \inf_{P\in\mathcal{P}(f)}\Pr\left(f(x_0)\in \widehat{\mathrm{CI}}_{n,\alpha}(x_0)\right)=1-\alpha.\]
\end{exmp}
\section{Simulations}\label{sec:simulation}
In the following subsections, we demonstrate the salient features of the asymptotic result in Theorem~\ref{thm:asymptotic-distribution-of-LSE} and also study the performance of HulC for inference.  Sections~\ref{subsec:sim-s_n} and~\ref{subsec:sim-psi}, in addition to corroborating our theoretical results, show the effects of $s_n$ and $\psi$ (and corresponding $\Psi$) respectively. In section~\ref{subsec:sim-compare-CI}, we compare the performance of applicable inference procedures in monotone regressions setting, analyzing the coverage and width of confidence intervals obtained.
\subsection{Different Rates with Same Limiting Distribution} \label{subsec:sim-s_n}
In this section, we illustrate the fact that for the same limiting distribution, monotone LSE can have different rates of convergence. We mentioned that this is theoretically possible via~\eqref{eq:construction-of-f_0n}. We consider scenarios with the same $\psi(x) = x^2 \sign(x)$ but different choices of $s_n = n^\alpha$ with $\alpha \in \{  {1/6}, {2/6}, {3/6}, {4/6}, {5/6} \}$. The choices of $s_n$ are equally spaced on the log scale.  In similar spirit, we choose $n$ growing from $665 \approx e^{6.5}$ to $22026 \approx e^{10}$, with equally spaced increments in the log scale. For each choice of $(n,s_n)$, we construct $f_{0,n}(x)$ using~\eqref{eq:construction-of-f_0n} and generate $n$ IID observations from $X \sim \unif(-1,1)$ and $Y = f_{0,n}(X) + \xi$ where $\xi|X \sim N(0,1)$. For the $j$-th sample (i.e., $j$-th collection of $n$ observations), we calculate and store $\widehat{f}_{n}^{(j)}(0)$, for $1\le j\le 500$, thus obtaining 500 observations of $\widehat{f}_{n}(0)$, for each choice of $(n,s_n)$.
First, we compare the MSE of the estimator via its estimator, \begin{equation}
\widehat{\mathrm{MSE}}(n,s_n) = \frac{1}{500}\sum_{j = 1}^{500} (\widehat{f}_{n}^{(j)}(0) - {f}_{0,n}(0))^2,
\end{equation}
where $\widehat{f}_{n}^{(j)}(0)$ is the estimator at $0$ obtained by the $j$-th sample of $n$ observations.
\begin{figure}[h!]
    \centering
    \includegraphics[width = 0.9\textwidth]{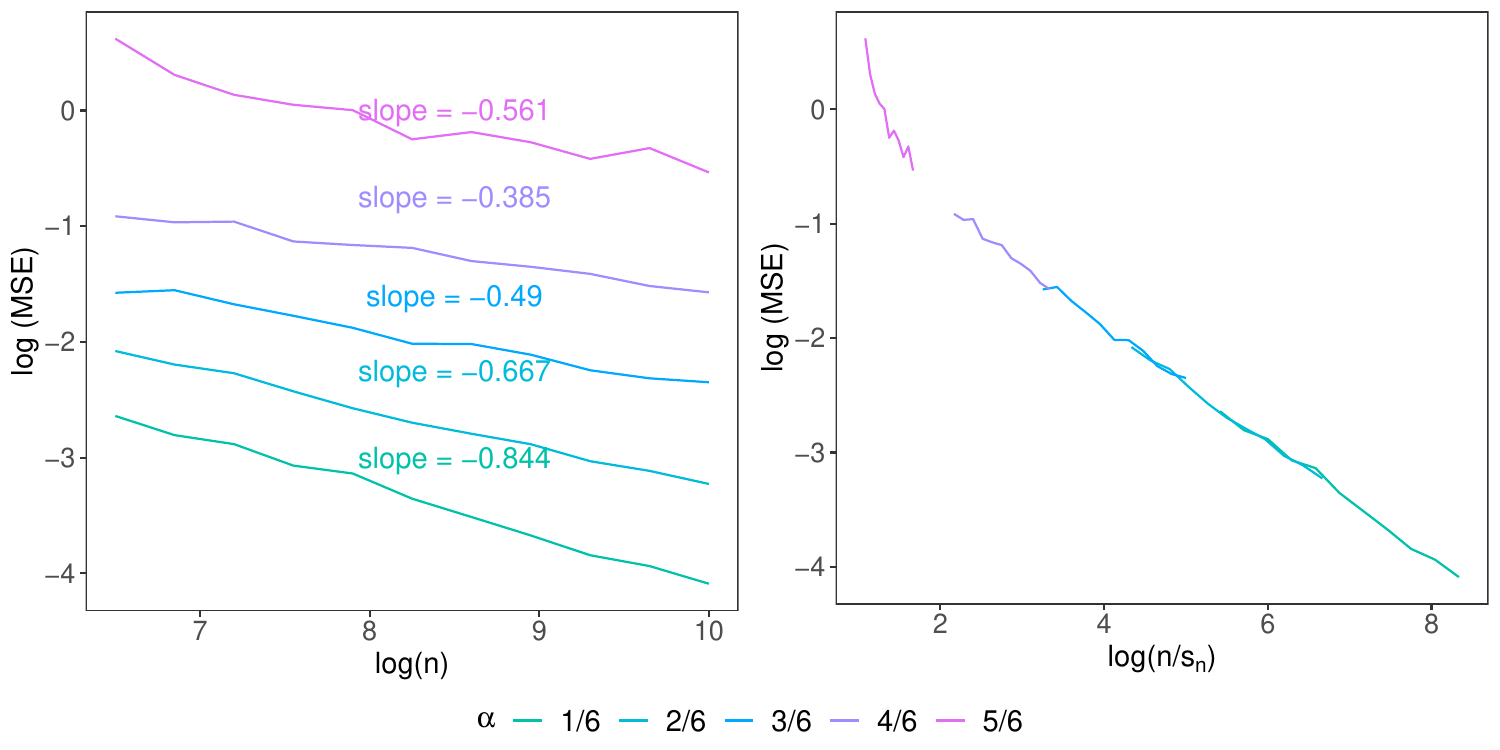}
    \caption{Analysis of MSE with same $\psi(x) = x^2\sign(x)$ with different $s_n = n^\alpha$. The left panel shows a plot of $\log(\widehat{\mathrm{MSE}})$ vs. $\log(n)$. Here, the slope of the best linear approximation of the $s_n= n^\alpha$ curve is close to $(\alpha -1)$. The right panel shows a plot of $\log(\widehat{\mathrm{MSE}})$ vs. $\log(n/s_n)$. Here, instead, we see that the slope of the best linear approximation of the $s_n = n^\alpha$ curve is the same, regardless of the choice of $s_n$. }
    \label{fig:rates-log(mse)}
\end{figure}
According to Theorem~\ref{thm:asymptotic-distribution-of-LSE}, $\sqrt{n/s_n}(\widehat{f}_{n}(0) - {f}_{0,n}(0))$ has a non-degenerate limit (which is the same for all choices of $s_n$ since we chose the same $\psi, h_0$ and $\sigma_0^2$), thus suggesting that
\[
\log(\widehat{\mathrm{MSE}}(n,s_n = n^\alpha)) \approx \text{constants} + (\alpha - 1) \log(n).
\]
This is exactly what is observed in Figure~\ref{fig:rates-log(mse)} left plot when plotting $\log(\widehat{\mathrm{MSE}})$ vs. $\log(n)$. For $s_n = n^\alpha$, the best linear approximation of $\log(\widehat{\mathrm{MSE}})$ with respect to $\log(n)$ has a slope close to $(\alpha - 1)$ as annotated in the plot. Whereas when plotting $\log(\widehat{\mathrm{MSE}})$ vs. $\log(n/s_n)$ in Figure~\ref{fig:rates-log(mse)} right plot, we see that the curves have similar slope regardless of the rate.
Finally, for Figure~\ref{fig:rates-qqplot}, we fix $n$ to a large number (approximately $e^{10}$). For each $s_n$, we plot a combined QQ-plot of 500 observations of $(1/2)\sqrt{n/s_n}(\hat{f}_{n}(0) - {f}_{0,n}(0))$ and 500 observations from the \textit{same} asymptotic distribution (given by the RHS of Theorem~\ref{thm:asymptotic-distribution-of-LSE}). The QQ plots all concentrate around $y= x$ line, indicating that the asymptotic distribution of $\sqrt{n/s_n}(\hat{f}_{n}(0) - {f}_{0,n}(0))$ is indeed the same regardless of choice of $s_n$.
\begin{figure}[H]
    \centering
    \includegraphics[width = 0.75\textwidth]{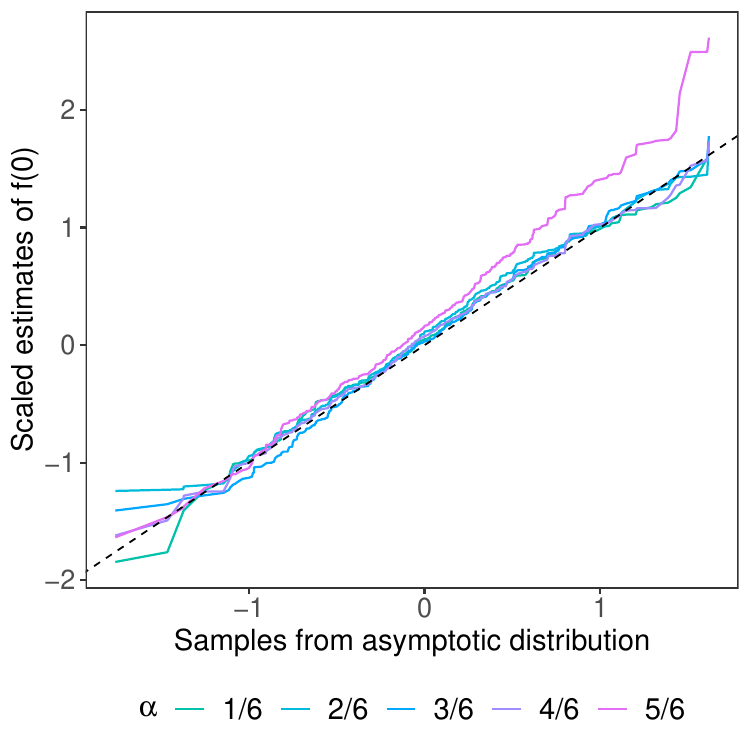}
    \caption{Combined QQ-plot of 500 observations of $1/2\cdot\sqrt{n/s_n}\cdot(\hat{f}_{n}(0) - {f}_{0,n}(0))$ for each choice of $s_n = n^\alpha$ vs. 500 observations from the \textit{same} asymptotic random variable,  $\slGCM[B(t) + |t|^3/24:t\in\mathbb{R}](0)$. The lines concentrate around the $y=x$ line.  }
    \label{fig:rates-qqplot}
\end{figure}
\subsection{Same Rate with Different Limiting Distributions}\label{subsec:sim-psi}
To study the effect of $\psi$, we first fix $s_n$ to $n^{1/3}$. We pick 4 choices of $\psi$ which vary in smoothness symmetry and regularity:
\begin{align}\label{eq:sim-diff-psi}
\psi_1(x) &= x,\quad \psi_2(x)= x[\mathbf{1}\{x \ge 0\}/2 + \mathbf{1}\{x < 0\}],\\
 \psi_3(x) &= (x^2/2)\mathbf{1}\{x \ge 0\} + (x^3/3)\mathbf{1}\{x < 0\}, \quad \psi_4(x) = \mathbf{1}\{|x|> 0.1\} \sign(x).
\end{align}
Before presenting the simulation results, let us pause to understand the properties of these functions. $\psi_1$ is a smooth odd function around $0$, which is continuously differentiable, leading to a symmetrical $\Psi_1$. $\psi_2$ is a continuous but not a differentiable function, with different left and right derivatives at 0. $\psi_3$ is also not an odd function, but with a higher order of smoothness. It has continuous first-order derivatives, but different left and right second-order derivatives. Moreover, the degree of smoothness (order of the polynomial) of the $\psi_3$ is different on the left and right sides of the $0$. $\psi_4$ on the other hand, is flat around $0$ but is discontinuous around it at $\{-0.1,0.1\}$.
As in Section~\ref{subsec:sim-s_n}, we choose $n$ growing from $665 \approx e^{6.5}$ to $22026 \approx e^{10}$, with equally spaced increments in the log scale. For each choice of $(n,\psi)$, we construct $f_{0,n}(x)$ as in~\eqref{eq:construction-of-f_0n} and generate $n$ IID observations from $X \sim \unif(-1,1)$ and $Y = f_{0,n}(X) + \xi$ where $\xi \sim N(0,1)$. For each set of observations, we calculate and store the estimate $\widehat{f}_{n}$ at $0$. We repeat this process 500 times, thus obtaining 500 observations of $\widehat{f}_{n}(0)$ for each choice of $(n,\psi)$.
First, we compare the MSE of the estimator via its estimator,
\begin{equation}
\widehat{\mathrm{MSE}}(n,\psi) = \frac{1}{500} \sum_{j = 1}^{500} (\widehat{f}_{n}^{(j)}(0) - {f}_{0,n}(0))^2,
\end{equation}
where $\widehat{f}_{n}^{(j)}(0)$ is the estimator at $0$ obtained from the $j$-th sample. According to Theorem~\ref{thm:asymptotic-distribution-of-LSE}, $\sqrt{n/s_n}(\hat{f}_{n}(0) - {f}_{0,n}(0))$ has a non-degenerate limit which is now different for all choices of $\psi$. This suggests that $\E[ (\hat{f}_{n}(0) - {f}_{0,n}(0))^2 ]$ is $C^*(\psi) s_n/n = C^*(\psi) n^{-2/3}$ where $C^*(\psi)$ is representing constants depending on the asymptotic distribution due to $\psi$ alone ($h_0 = 1/2, \sigma_0^2 = 1$). Hence, we should observe that$$\log(\widehat{\mathrm{MSE}}(n,\psi)) \approx \text{constant}(\psi) +  (-2/3) \log(n).$$
\begin{figure}[h]
    \centering
    \includegraphics[width = 0.7\textwidth]{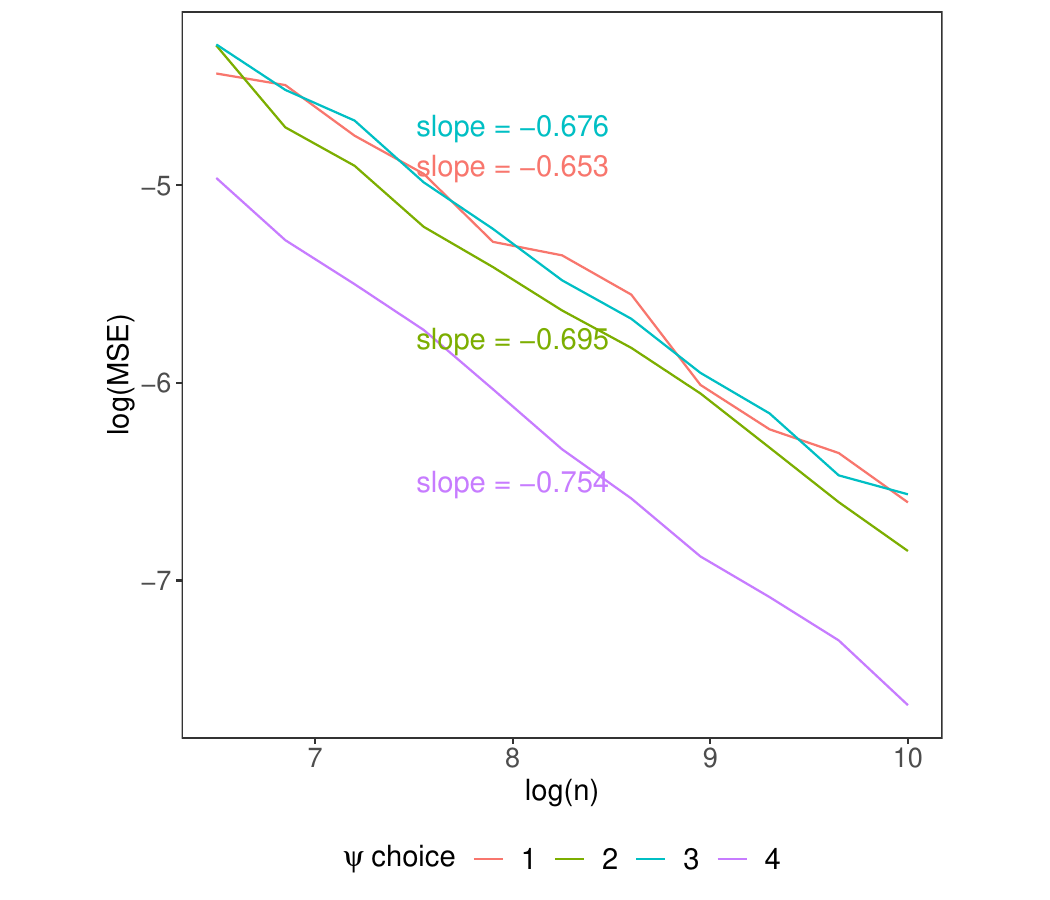}
    \caption{Analysis of MSE with same $s_n = n^{1/3}$
     with different $\psi$ as in (\ref{eq:sim-diff-psi}). This is a  plot of $\log(\widehat{\mathrm{MSE}})$ vs. $\log(n)$. Here, the slope of the best linear approximation of each $\psi$ curve is close to $-2/3 = -0.667$.}
    \label{fig:psi-log(mse)}
\end{figure}
This is demonstrated in Figure~\ref{fig:psi-log(mse)}, when plotting $\log(\widehat{\mathrm{MSE}})$ vs. $\log(n)$. For different choices of $\psi$, the best linear approximation of $\log(\widehat{\mathrm{MSE}})$ with respect to $\log(n)$ has a slope close to $-2/3$ (as annotated in the plot) with different intercepts for each $\psi$.
For Figure~\ref{fig:rates-qqplot2}, we fix $n$ to a large number (approximately $e^{10}$). For each choice of $\{\psi_k\}_{k = 1,2,3,4}$, we plot a separate QQ-plot of 500 observations of $C\sqrt{n/s_n}(\hat{f}_{n}(0) - {f}_{0,n}(0))$ (where $C$ is the appropriate constant from the LHS of Theorem~\ref{thm:asymptotic-distribution-of-LSE}) and 500 observations from the asymptotic distribution (given by the RHS of Theorem~\ref{thm:asymptotic-distribution-of-LSE}) which \textit{vary} with $\psi$. The QQ plots all concentrate around $y= x$ line, indicating that the asymptotic distribution of $\sqrt{n/s_n}(\hat{f}_{n}(0) - {f}_{0,n}(0))$ is indeed matching the proposed asymptotic distribution.
\begin{figure}[h]
    \centering
    \includegraphics[width = 0.75\textwidth]{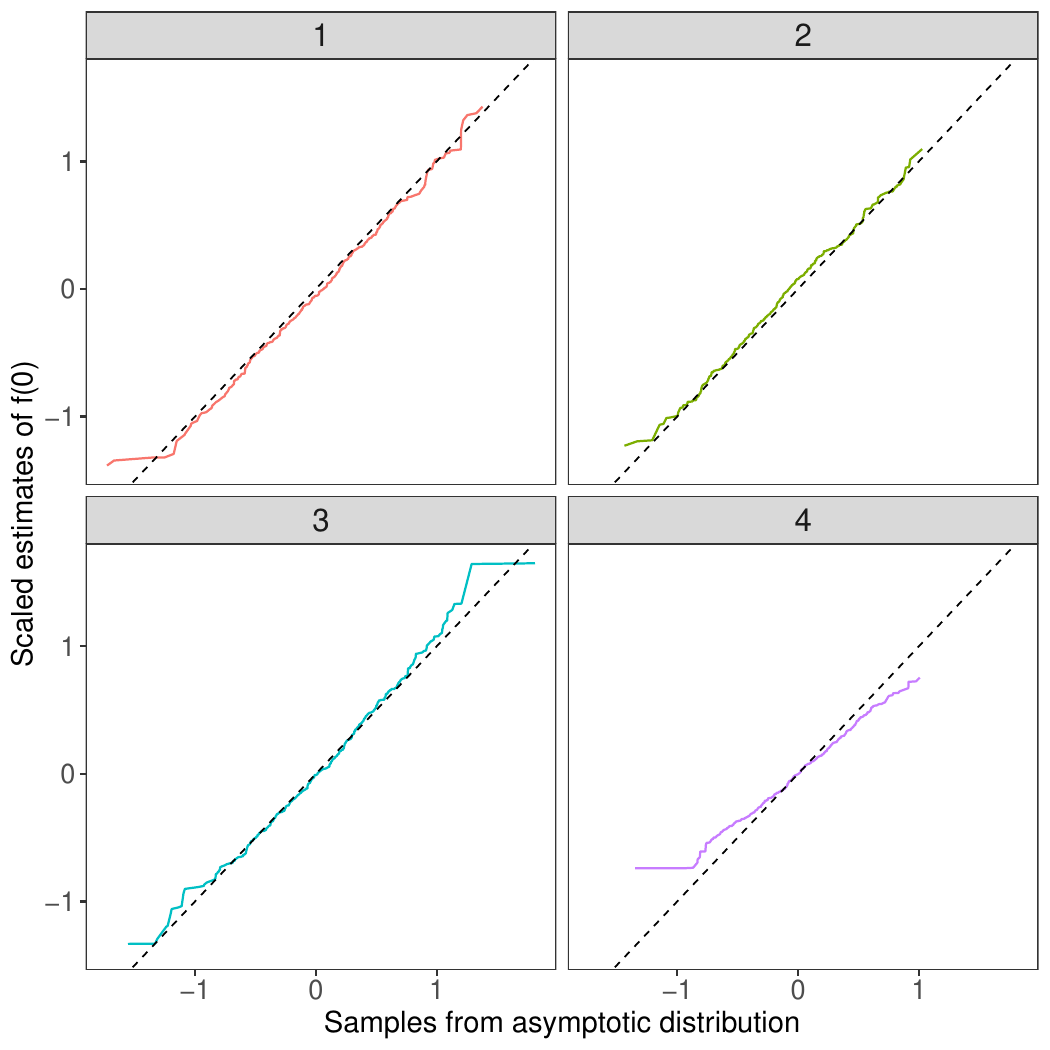}
    \caption{QQ-plot of 500 observations of $1/2\cdot\sqrt{n/s_n}\cdot(\hat{f}_{n}(0) - {f}_{0,n}(0))$ for each choice of ${\psi_k}$ vs. 500 observations from the asymptotic random variable,  $\slGCM[B(t) + \Psi_k(t/2):t\in\mathbb{R}](0)$ where $\Psi_k$ is defined by (\ref{eq:definition-Psi}) for $\psi_k$. The choice of $k \in \{1,2,3,4\}$ is on top of each panel. The lines concentrate around the $y=x$ line for each $\psi_k$.  }
    \label{fig:rates-qqplot2}
\end{figure}
\subsection{Comparison of Confidence Intervals}\label{subsec:sim-compare-CI}
To compare the performance of confidence intervals for monotone regression, we consider the example of $X\sim \unif(-1,1)$ and $Y = |x|^\theta \sign(x)  + \xi$ where $\xi\sim N(0,1)$. $\theta\ge0$ here is the flatness parameter, and our point of interest $x_0=0$.
We compare the following methods.
\begin{itemize}
   \item \textbf{DHZ oracle pivot method}: \cite{deng2021confidence} propose a pivotal statistic $\widehat{r}_n(\widehat{f}_n(x_0) - f_{0,n}(x_0))$  with $\widehat{r}_n$ calculated from the data; see Section 4.3 of~\cite{deng2021confidence}. The asymptotic distribution is the same for all functions with the same local flatness parameter. Thus, assuming that this parameter is known, one can draw large samples from this distribution, and obtain a high-accuracy estimate of any necessary quantile. This quantile can now be used to provide confidence intervals for $f_{0,n}(0)$. This is not a practical method, as $\theta$ is in general unknown. Theorem 3 of~\cite{deng2021confidence} suggests the use of the largest quantile over all flatness parameters, but no method computing such largest quantile is given.
      \item \textbf{DHZ data-driven pivot method}:  When the local flatness parameter is not known, \citet[Section 4.1.2]{deng2021confidence} suggest estimating the quantiles by simulating data from a smoothed LOESS estimator with $\sigma^2$ being estimated using the difference estimator \citep{rice1984bandwidth}. This method is the pivotal version of the smoothed bootstrap discussed in~\citet[Section 4.2.1]{guntuboyina2018nonparametric}. Although performing well in our simulations, this method currently has no theoretical guarantee, and we believe that its performance strongly depends on the underlying smoother used.
\item \textbf{Subsampling with unknown rate of convergence:} As mentioned in the introduction, classical subsampling is not applicable for isotonic LSE because of the unknown rate. \cite{bertail1999subsampling} proposes a subsampling method involving estimating the rate of convergence. We use subsample size $m = n^{1/2}$ for our comparison.
\item \textbf{HulC:} \cite{kuchibhotla2021hulc} provides a general tuning-free inference method that is valid for asymptotically zero median-bias distributions. Given our choice of $f_{0,n}(x) = |x|^\theta \sign(x)$, as seen in Example~\ref{exmp:Wright-1981}, we get $\psi(x) = |x|^\theta \sign(x)$ and $s_n = n^{1/(2\theta + 1)}$. By Theorem~\ref{thm:general-drifted-brownian-motion}, we know that the asymptotic distribution has zero median bias.
\end{itemize}
We take sample sizes $n$ to be in $\{50,100,250,1000\}$.
To see the effect of the flatness parameter, we pick $\theta$  ranging from $0.2$ to $10$.  For each $(\theta,n)$, we take $n$ observations $(X_i,Y_i), 1\le i\le n$ from the data-generating process mentioned above and construct confidence intervals for $f_{0,n}(x_0)$ with $0.95$ probability target. This process is replicated 1000 times to obtain estimates for the expected coverage and width for each method as follows,
\begin{align}
    \widehat{\E}[\text{Coverage(CI)}] &\approx \frac{1}{1000}\sum_{j = 1}^{1000} \text{I}\left( f(x_0) \in \widehat{\mathrm{CI}}^j\right), \notag\\
    \widehat{\E}[\text{Width(CI)}] &\approx \frac{1}{1000}\sum_{j = 1}^{1000} \text{Width}\left(\widehat{\mathrm{CI}}^j\right).
\end{align}
where $\widehat{\mathrm{CI}}^j$ is the confidence interval obtained from the $j$-th replication. In Figure~\ref{fig:compare}, we plot the coverage and width for all the methods across the flatness parameter $\theta$. Subsampling fails to hit the coverage target of $0.95$  across all sample sizes when the $f_{0,n}(x)$ has a low degree of smoothness. Both variants of the DHZ pivotal method perform well for small and large sample sizes. The HulC method, although missing the coverage target slightly for small $\theta$'s, performs well for large $\theta$'s, having better or comparable performance to other methods. Note that in this example, Theorem~\ref{thm:asymptotic-distribution-of-LSE} is not applicable when $\theta$ is allowed to converge to zero (and hence median unbiasedness may not hold). Finally, it is interesting to note that HulC has a comparable or shorter width compared to the DHZ methods, even for small values of $\theta$.
\begin{figure}[h]
    \centering
    \includegraphics[width = \textwidth]{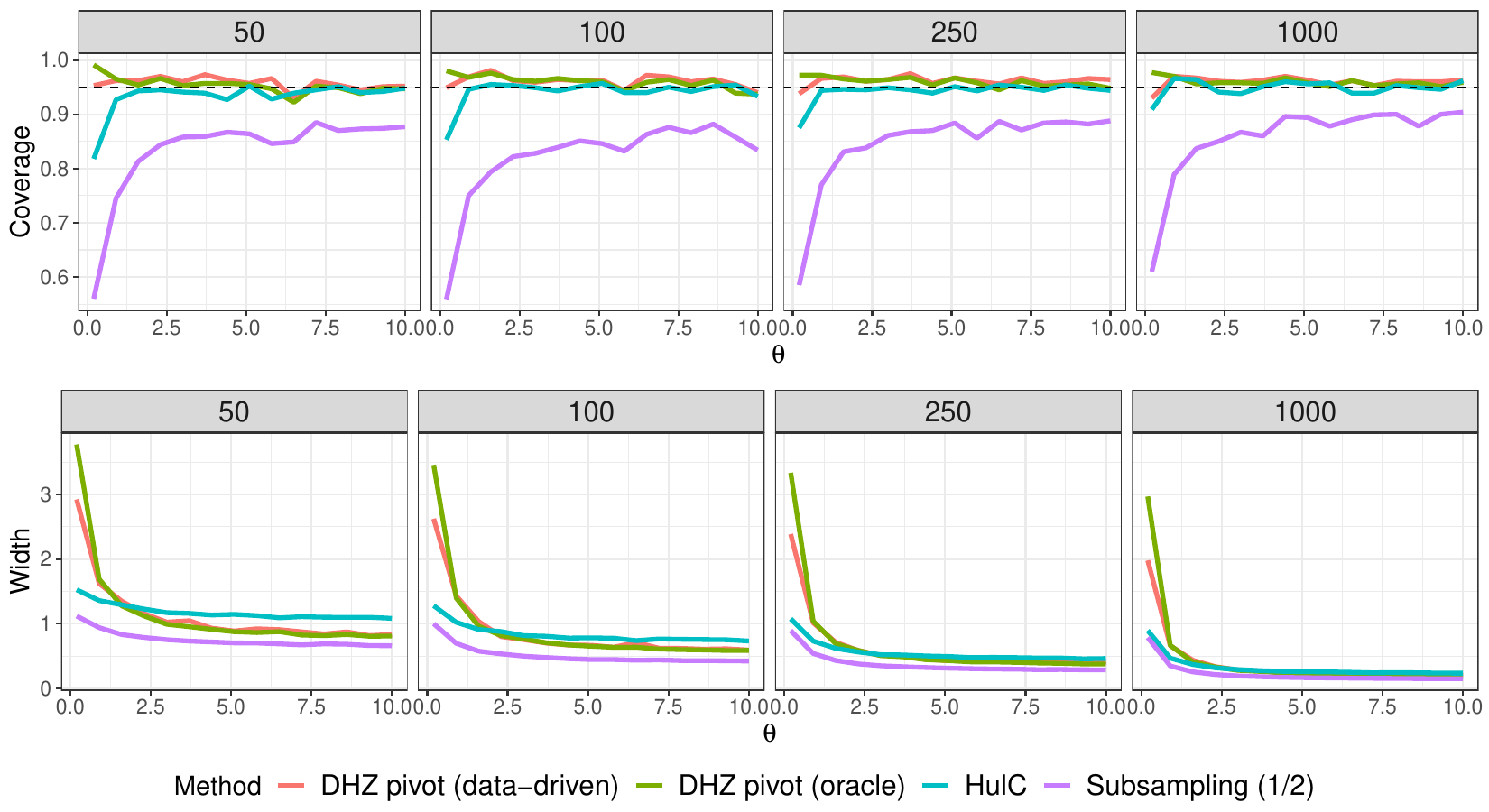}
    \caption{Comparison of methods to construct confidence intervals for monotonic regression estimator. The top of each panel is the sample size for which the comparison is done. The top plot compares the expected coverage of CI constructed vs. the degree of smoothness $\theta$. The bottom plot compares the expected width of CI constructed vs. the degree of smoothness $\theta$. }
    \label{fig:compare}
\end{figure}
\section{Discussion}\label{sec:discussions}
In this paper, we have significantly expanded the known collection of limiting distributions for isotonic regression. In a random design setting with independent observations, we have obtained the limiting distributions for isotonic LSE under a variety of local behavior assumptions on the underlying regression function. Furthermore, we studied the properties of the limiting distributions such as continuity, symmetry, and the median, and provided a simple sufficient condition for symmetry. With the help of such conditions, we constructed asymptotically valid adaptive confidence intervals for the isotonic regression function at a point $x_0$. To our knowledge, this is the first such uniformly valid confidence interval that does not even require the existence of a limiting distribution.
There are several interesting future venues to explore. Firstly, the set of possible limiting distributions obtained in this paper can be further expanded by relaxing the assumptions on the distribution of the covariates. This can be done by allowing the density of the covariates at the point of interest to be zero but restricting the local behavior of the distribution function. Secondly, we focused on the case of univariate isotonic regression in this paper. Two ways the setting can be generalized is by considering either multivariate isotonic regression problems or other shape-constrained problems including $k$-monotonicity~\citep{guntuboyina2015global} or both. At present, it is not obvious how either of these would evolve because, for the multivariate case, the LSE is not minimax optimal and an optimal block min-max estimator is more suitable, but the limiting distributions are no longer defined in terms of $\slGCM$~\citep{deng2020isotonic,deng2021confidence} and for other shape-constraints the limiting distribution involves more complicated functions of Brownian motion, such as the ``invelope'' function for convex regression~\citep{ghosal2017univariate}. More importantly, the technical tools to understand the distribution of random variables defined via optimization problems involving Brownian motion are not well-developed. Another interesting direction would be to understand the unimodality of the $\slGCM$ distribution. From the plots of the limiting distribution in our examples, we conjecture that unimodality at zero holds true for the $\slGCM$ for any non-negative strongly convex drift function which is zero at zero. This would allow us to use unimodal HulC from~\cite{kuchibhotla2021hulc} for asymptotically valid inference.
\section*{Acknowledgements}
This work was supported by the National Science Foundation (NSF) DMS-2210662.

\bibliographystyle{elsarticle-harv}
\bibliography{ref}

@article{wright1981asymptotic,
  title={The asymptotic behavior of monotone regression estimates},
  author={Wright, Farrol T},
  journal={The Annals of Statistics},
  volume={9},
  number={2},
  pages={443--448},
  year={1981},
  publisher={Institute of Mathematical Statistics}
}

@article{bhattacharya1974convergence,
  title={Convergence of sample paths of normalized sums of induced order statistics},
  author={Bhattacharya, Prodyot K},
  journal={The Annals of Statistics},
  volume={2},
  number={5},
  pages={1034--1039},
  year={1974},
  publisher={Institute of Mathematical Statistics}
}

@article{dumbgen2003optimal,
  title={Optimal confidence bands for shape-restricted curves},
  author={D{\"u}mbgen, Lutz},
  journal={Bernoulli},
  volume={9},
  number={3},
  pages={423--449},
  year={2003},
  publisher={Bernoulli Society for Mathematical Statistics and Probability}
}

@article{brunk1956some,
  title={Some {H}elly theorems for monotone functions},
  author={Brunk, HD and Ewing, GM and Utz, WR},
  journal={Proceedings of the American Mathematical Society},
  volume={7},
  number={5},
  pages={776--783},
  year={1956}
}

@article{yang2019contraction,
  title={Contraction and uniform convergence of isotonic regression},
  author={Yang, Fan and Barber, Rina Foygel},
  journal={Electronic Journal of Statistics},
  volume={13},
  pages={646--677},
  year={2019}
}

@article{guntuboyina2018nonparametric,
  title={Nonparametric shape-restricted regression},
  author={Guntuboyina, Adityanand and Sen, Bodhisattva},
  journal={Statistical Science},
  volume={33},
  number={4},
  pages={568--594},
  year={2018},
  publisher={JSTOR}
}

@article{deng2020isotonic,
  title={Isotonic regression in multi-dimensional spaces and graphs},
  author={Deng, Hang and Zhang, Cun-Hui},
  journal={The Annals of Statistics},
  volume={48},
  number={6},
  pages={3672--3698},
  year={2020}
}

@article{kuchibhotla2021hulc,
  title={The {HulC}: confidence regions from convex hulls},
  author={Kuchibhotla, Arun Kumar and Balakrishnan, Sivaraman and Wasserman, Larry},
  journal={Journal of the Royal Statistical Society Series B: Statistical Methodology},
  volume={86},
  number={3},
  pages={586--622},
  year={2024},
  publisher={Oxford University Press},
  doi={10.1093/jrsssb/qkad134}
}

@article{balabdaoui2019least,
  title={Least squares estimation in the monotone single index model},
  author={Balabdaoui, Fadoua and Durot, C{\'e}cile and Jankowski, Hanna},
  journal={Bernoulli},
  volume={25},
  number={4B},
  pages={3276--3310},
  year={2019}
}

@article{han2018robustness,
  title={Robustness of shape-restricted regression estimators: An envelope perspective},
  author={Han, Qiyang and Wellner, Jon A},
  journal={arXiv preprint arXiv:1805.02542},
  year={2018},
  note={preprint},
  doi={10.48550/arXiv.1805.02542}
}

@article{guntuboyina2015global,
  title={Global risk bounds and adaptation in univariate convex regression},
  author={Guntuboyina, Adityanand and Sen, Bodhisattva},
  journal={Probability Theory and Related Fields},
  volume={163},
  number={1-2},
  pages={379--411},
  year={2015},
  publisher={Springer}
}

@article{zhang2002risk,
  title={Risk bounds in isotonic regression},
  author={Zhang, Cun-Hui},
  journal={The Annals of Statistics},
  volume={30},
  number={2},
  pages={528--555},
  year={2002},
  publisher={Institute of Mathematical Statistics}
}

@article{bellec2018sharp,
  title={Sharp oracle inequalities for least squares estimators in shape restricted regression},
  author={Bellec, Pierre C},
  journal={The Annals of Statistics},
  volume={46},
  number={2},
  pages={745--780},
  year={2018},
  publisher={JSTOR}
}

@article{groeneboom2001estimation,
  title={Estimation of a convex function: characterizations and asymptotic theory},
  author={Groeneboom, Piet and Jongbloed, Geurt and Wellner, Jon A},
  journal={The Annals of Statistics},
  volume={29},
  number={6},
  pages={1653--1698},
  year={2001},
  publisher={Institute of Mathematical Statistics}
}

@article{ghosal2017univariate,
  title={On univariate convex regression},
  author={Ghosal, Promit and Sen, Bodhisattva},
  journal={Sankhy{\=a}: The Indian Journal of Statistics, Series A},
  volume={79},
  number={2},
  pages={215--253},
  year={2017},
  publisher={Springer}
}

@article{ayer1955empirical,
  title={An empirical distribution function for sampling with incomplete information},
  author={Ayer, Miriam and Brunk, H Daniel and Ewing, George M and Reid, William T and Silverman, Edward},
  journal={The annals of mathematical statistics},
  pages={641--647},
  year={1955},
  publisher={JSTOR}
}

@book{groeneboom2014nonparametric,
  title={Nonparametric Estimation under Shape Constraints: Estimators, Algorithms and Asymptotics},
  author={Groeneboom, Piet and Jongbloed, Geurt},
  series={Cambridge Series in Statistical and Probabilistic Mathematics},
  volume={38},
  year={2014},
  publisher={Cambridge University Press},
  address={New York}
}

@article{kim1990cube,
  title={Cube root asymptotics},
  author={Kim, Jeankyung and Pollard, David},
  journal={The Annals of Statistics},
  volume={18},
  number={1},
  pages={191--219},
  year={1990},
  publisher={Institute of Mathematical Statistics}
}

@book{resnick2008extreme,
  title={Extreme values, regular variation, and point processes},
  author={Resnick, Sidney I},
  volume={4},
  year={2008},
  publisher={Springer Science \& Business Media}
}

@article{leurgans1982asymptotic,
  title={Asymptotic distributions of slope-of-greatest-convex-minorant estimators},
  author={Leurgans, Sue},
  journal={The Annals of Statistics},
  volume={10},
  number={1},
  pages={287--296},
  year={1982},
  publisher={Institute of Mathematical Statistics}
}

@article{rao1969estimation,
  title={Estimation of a unimodal density},
  author={Rao, B L S Prakasa},
  journal={Sankhy{\=a}: The Indian Journal of Statistics, Series A},
  volume={31},
  number={1},
  pages={23--36},
  year={1969},
  publisher={Indian Statistical Institute}
}

@book{resnick2007heavy,
  title={Heavy-tail phenomena: probabilistic and statistical modeling},
  author={Resnick, Sidney I},
  year={2007},
  publisher={Springer Science \& Business Media}
}

@article{bertail1999subsampling,
  title={On subsampling estimators with unknown rate of convergence},
  author={Bertail, Patrice and Politis, Dimitris N and Romano, Joseph P},
  journal={Journal of the American Statistical Association},
  volume={94},
  number={446},
  pages={569--579},
  year={1999},
  publisher={Taylor \& Francis}
}

@article{von1965inequalities,
  title={Inequalities for the $r$-th absolute moment of a sum of random variables, $1\le r\le 2$},
  author={von Bahr, Bengt and Esseen, Carl-Gustav},
  journal={The Annals of Mathematical Statistics},
  volume={36},
  number={1},
  pages={299--303},
  year={1965},
  publisher={Institute of Mathematical Statistics}
}

@article{deng2021confidence,
  title={Confidence intervals for multiple isotonic regression and other monotone models},
  author={Deng, Hang and Han, Qiyang and Zhang, Cun-Hui},
  journal={The Annals of Statistics},
  volume={49},
  number={4},
  pages={2021--2052},
  year={2021},
  publisher={Institute of Mathematical Statistics}
}

@article{cattaneo2023bootstrap,
  title={Bootstrap-assisted inference for generalized {G}renander-type estimators},
  author={Cattaneo, Matias D and Jansson, Michael and Nagasawa, Kenichi},
  journal={The Annals of Statistics},
  volume={52},
  number={4},
  pages={1509--1533},
  year={2024},
  publisher={Institute of Mathematical Statistics},
  doi={10.1214/24-AOS2402}
}

@article{banerjee2007likelihood,
  title={Likelihood based inference for monotone response models},
  author={Banerjee, Moulinath},
  journal={Annals of statistics},
  volume={35},
  number={3},
  pages={931--956},
  year={2007}
}

@article{han2022berry,
  title={{B}erry--{E}sseen bounds for {C}hernoff-type nonstandard asymptotics in isotonic regression},
  author={Han, Qiyang and Kato, Kengo},
  journal={The Annals of Applied Probability},
  volume={32},
  number={2},
  pages={1459--1498},
  year={2022},
  publisher={Institute of Mathematical Statistics}
}

@article{anevski2006general,
  title={A general asymptotic scheme for inference under order restrictions},
  author={Anevski, D And H{\"o}ssjer, O},
  journal={Annals of statistics},
  volume={34},
  number={4},
  pages={1874--1930},
  year={2006}
}

@article{anevski2002monotone,
  title={Monotone regression and density function estimation at a point of discontinuity},
  author={Anevski, Dragi and H{\"o}ssjer, Ola},
  journal={Journal of Nonparametric Statistics},
  volume={14},
  number={3},
  pages={279--294},
  year={2002},
  publisher={Taylor \& Francis}
}

@article{westling2020unified,
  title={A unified study of nonparametric inference for monotone functions},
  author={Westling, Ted and Carone, Marco},
  journal={Annals of statistics},
  volume={48},
  number={2},
  pages={1001--1024},
  year={2020},
  publisher={NIH Public Access}
}

@book{van1996weak,
  title = {Weak Convergence and Empirical Processes: With Applications to Statistics},
  author = {van der Vaart, Aad W. and Wellner, Jon A.},
  series = {Springer Series in Statistics},
  doi = {10.1007/978-1-4757-2545-2},
  publisher = {Springer},
  address = {New York},
  year = {1996}
}

@article{knight1998limiting,
  title={Limiting distributions for ${L}_1$ regression estimators under general conditions},
  author={Knight, Keith},
  journal={Annals of statistics},
  volume={26},
  number={2},
  pages={755--770},
  year={1998},
  publisher={Institute of Mathematical Statistics}
}

@inproceedings{knight2002limiting,
  title={What are the limiting distributions of quantile estimators?},
  author={Knight, Keith},
  booktitle={Statistical Data Analysis Based on the $L_1$-Norm and Related Methods},
  pages={47--65},
  year={2002},
  organization={Springer}
}

@article{alvarez2012m,
  title={M-estimators for isotonic regression},
  author={Alvarez, Enrique E and Yohai, V{\'\i}ctor J},
  journal={Journal of Statistical Planning and Inference},
  volume={142},
  number={8},
  pages={2351--2368},
  year={2012},
  publisher={Elsevier}
}

@book{gine2021mathematical,
  title={Mathematical foundations of infinite-dimensional statistical models},
  author={Gin{\'e}, Evarist and Nickl, Richard},
  year={2021},
  publisher={Cambridge university press}
}

@article{wright1984asymptotic,
  title={The asymptotic behavior of monotone percentile regression estimates},
  author={Wright, FT},
  journal={Canadian Journal of Statistics},
  volume={12},
  number={3},
  pages={229--236},
  year={1984},
  publisher={Wiley Online Library}
}

@article{rice1984bandwidth,
  title={Bandwidth choice for nonparametric regression},
  author={Rice, John},
  journal={The Annals of Statistics},
  pages={1215--1230},
  year={1984},
  publisher={JSTOR}
}

@article{kruskal1964nonmetric,
  title={Nonmetric multidimensional scaling: a numerical method},
  author={Kruskal, Joseph B},
  journal={Psychometrika},
  volume={29},
  number={2},
  pages={115--129},
  year={1964},
  publisher={Springer-Verlag New York}
}

@article{miles1959complete,
  title={The complete amalgamation into blocks, by weighted means, of a finite set of real numbers},
  author={Miles, RE},
  journal={Biometrika},
  volume={46},
  number={3/4},
  pages={317--327},
  year={1959},
  publisher={JSTOR}
}

@article{banerjee2001likelihood,
  title={Likelihood ratio tests for monotone functions},
  author={Banerjee, Moulinath and Wellner, Jon A},
  journal={Annals of Statistics},
  pages={1699--1731},
  year={2001},
  publisher={JSTOR}
}

@article{best1990active,
  title={Active set algorithms for isotonic regression; a unifying framework},
  author={Best, Michael J and Chakravarti, Nilotpal},
  journal={Mathematical Programming},
  volume={47},
  number={1},
  pages={425--439},
  year={1990},
  publisher={Springer}
}

\clearpage
\begin{center}
  {\LARGE\bfseries Appendices\par}
\end{center}
\vspace{1.5em}

\appendix
\setcounter{section}{0}
\setcounter{equation}{0}
\setcounter{figure}{0}
\setcounter{table}{0}
\setcounter{definition}{0}
\setcounter{assumption}{0}
\setcounter{exmp}{0}
\renewcommand{\theequation}{\Alph{section}.\arabic{equation}}
\renewcommand{\thefigure}{\Alph{section}.\arabic{figure}}
\renewcommand{\thetable}{\Alph{section}.\arabic{table}}
\makeatletter
\@addtoreset{equation}{section}
\@addtoreset{figure}{section}
\@addtoreset{table}{section}
\@addtoreset{definition}{section}
\@addtoreset{assumption}{section}
\@addtoreset{exmp}{section}
\makeatother
\renewcommand{\thedefinition}{\Alph{section}.\arabic{definition}}
\renewcommand{\theassumption}{\Alph{section}.\arabic{assumption}}
\renewcommand{\theexmp}{\Alph{section}.\arabic{exmp}}
\section{Additional simulations}\label{appsec:add-sims}
In Section~\ref{subsec:sim-compare-CI}, we compare several inference procedures for the data being generated with a $Y = |X|^\theta \sign(X) + \eps$, where $X\sim \unif(-1,1)$ and $\eps\sim N(0,1)$. In this section, we aim to compare several procedures across other interesting settings, extending beyond our canonical example. We consider functions on the domain $[-1,1]$ with the point of interest $x_0 = 0$. Also, we center all the functions such that $f_{0,n}(0) = 0$.
\begin{enumerate}
    \item \textbf{Heteroscedastic error:}
    Assumption (A1) allows for heteroscedastic errors with some regularity conditions for the variance $\sigma^2(x)$. We consider the following case.
    \begin{align}\label{eq:hetero_err}
       \textbf{(1)}\ & Y = \sqrt{n/s_n}|s_n X|^{3/2}\sign(s_n X) + \eps,\quad s_n = n^{1/3},\\
       & X \sim \unif(-1,1),\ \eps|X \sim N(0, 2|X| + 1).
    \end{align}
    \item \textbf{Non-polynomial choice of $\psi$:}
    A key aspect of Theorem~\ref{thm:asymptotic-distribution-of-LSE} is that it allows for any choice of monotonic function $\psi$ (and convex $\Psi$). We pick the following cases to compare methods in a non-polynomial choice of $\psi$.
        \begin{align}\label{eq:non_poly}
        &Y = \sqrt{n/s_n}\psi\big( s_nX\big)  + \eps,\quad s_n = n^{1/3}, \\
        &X \sim \unif(-1,1),\ \eps|X \sim N(0,1/2). \\
       \text{\textbf{Choice of $\psi$ :}} \   &\textbf{(2)} \ 2x + 2\sin(x) \\
       &\textbf{(3)} \ 5\big(F_{1.2,2.45}(x/2+1/2) - F_{1.2,2.45}(0)\big).
    \end{align}
    where $F_{\theta_1,\theta_2}$ is the CDF of Beta$(\theta_1,\theta_2)$ distribution.
    Among our 2 choices of $\psi$, Model \textbf{(2)} provides a symmetrical asymptotic distribution, whereas Model \textbf{(3)} provides an asymmetric asymptotic distribution. This can affect the coverage performance of HulC \citep{kuchibhotla2021hulc}, which works with median-unbiasedness.
    \item \textbf{$f_{0,n}$ with asymptotically different degree of smoothness:}
    Here we consider a case mentioned in the broader setting of Example~\ref{exmp:near-flat-functions}, which is as follows.
    \begin{align}\label{eq:asymp-diff-smoothness}
      \textbf{(4)}\ &  Y = n^{-1/5}X + X^3 + \eps, \\
       & X \sim \unif(-1,1),\ \eps|X \sim N(0, 1/2).
    \end{align}
    As mentioned in the example, although for finite $n$, the first derivative is non-zero (i.e. $n^{-1/5}$) at 0, it vanishes as $n$ grows and therefore asymptotically behaves like $x^3$. This scenario is not accounted for by~\cite{wright1981asymptotic}.
    \item \textbf{Multiple non-vanishing derivates:}
    Here we propose a choice of $\psi$ with a polynomial with multiple non-vanishing derivatives.
        \begin{align}\label{eq:non_vanish_deriv}
        &Y = \sqrt{n/s_n}\psi\big( s_nX\big)  + \eps,\quad s_n = \log\log(n), \\
        & X \sim \unif(-1,1),\ \eps \sim N(0,1/2). \\
       \text{\textbf{Choice of $\psi$ :}} \   &\textbf{(5)} \ x + x^3 + x^5.
    \end{align}
    In this case, $\psi$ has non-zero first, third, and fifth derivatives.
    \item \textbf{Non-uniform distribution of $X$:}  Assumption (A2) allows for not only random $X$, but with a non-uniform distribution. We consider the following case.
    \begin{align}\label{eq:non_unif_X}
       \textbf{(6)}\ & Y = \sqrt{n/s_n}|s_n X|^{3/2}\sign(s_n X) + \eps,\quad s_n = n^{1/3},\\
       & X \sim 2\cdot\text{Beta}(2,3) - 1,\ \eps|X \sim N(0, 1/2).
    \end{align}
    \item \textbf{Asymmetric error distribution: } Theorem~\ref{thm:asymptotic-distribution-of-LSE} allows for an asymmetric and heteroscedastic error model (provided they obey some regularity conditions imposed by Assumption (A1)). We consider the following case where the error follows mean $0$ non-central $\chi^2$ distribution.
        \begin{align}\label{eq:asymm_err}
       \textbf{(7)}\ & Y = \sqrt{n/s_n}|s_n X|^{3/2}\sign(s_n X) + \eps,\quad s_n = n^{1/3},\\
       & X \sim \unif(-1,1),\ \eps|X \sim Z^2 - X^2 - 1 \text{ where }Z\sim N(|X|, 1)
    \end{align}
\end{enumerate}
All the models considered are summarized in Table~\ref{table:more_exmp}. We take sample sizes $n$ to be from $500$ to $5000$ with a spacing of $500$.
 For each choice of example and $n$, we take $n$ observations $(X_i,Y_i), 1\le i\le n$ from the data-generating process mentioned above and construct confidence intervals for $f_{0,n}(x_0)$ with $0.95$ probability target. This process is replicated 1000 times to obtain estimates for the expected coverage and width for each method as follows,
\begin{align}
    \widehat{\E}[\text{Coverage(CI)}] &\approx \frac{1}{1000}\sum_{j = 1}^{1000} \text{I}\left( f(x_0) \in \widehat{\mathrm{CI}}^j\right), \notag\\
    \widehat{\E}[\text{Width(CI)}] &\approx \frac{1}{1000}\sum_{j = 1}^{1000} \text{Width}\left(\widehat{\mathrm{CI}}^j\right).
\end{align}
where $\widehat{\mathrm{CI}}^j$ is the confidence interval obtained from the $j$-th replication. In Figure~\ref{fig:compare_more_exmp}, we plot the coverage and width for all the methods across the $n$, for the 7 examples mentioned above. Subsampling fails to hit the coverage target of $0.95$  across all examples. The data-driven DHZ pivotal method attains (sometimes overshooting) the coverage target when the error is heteroscedastic or asymmetric (Model \textbf{(1)} in \eqref{eq:hetero_err} and Model \textbf{(7)} in \eqref{eq:asymm_err}). But it fails to adapt to a non-standard true function $f_{0,n}$ or non-uniform distribution for $X$. The HulC method slightly undercovers in Model \textbf{(3)} in \eqref{eq:non_poly}. This is expected since the asymptotic distribution is asymmetric. The method performs well across all the other examples, where asymptotic symmetric behavior is present.
\begin{figure}[!htb]
    \centering
    \includegraphics[width = \textwidth]{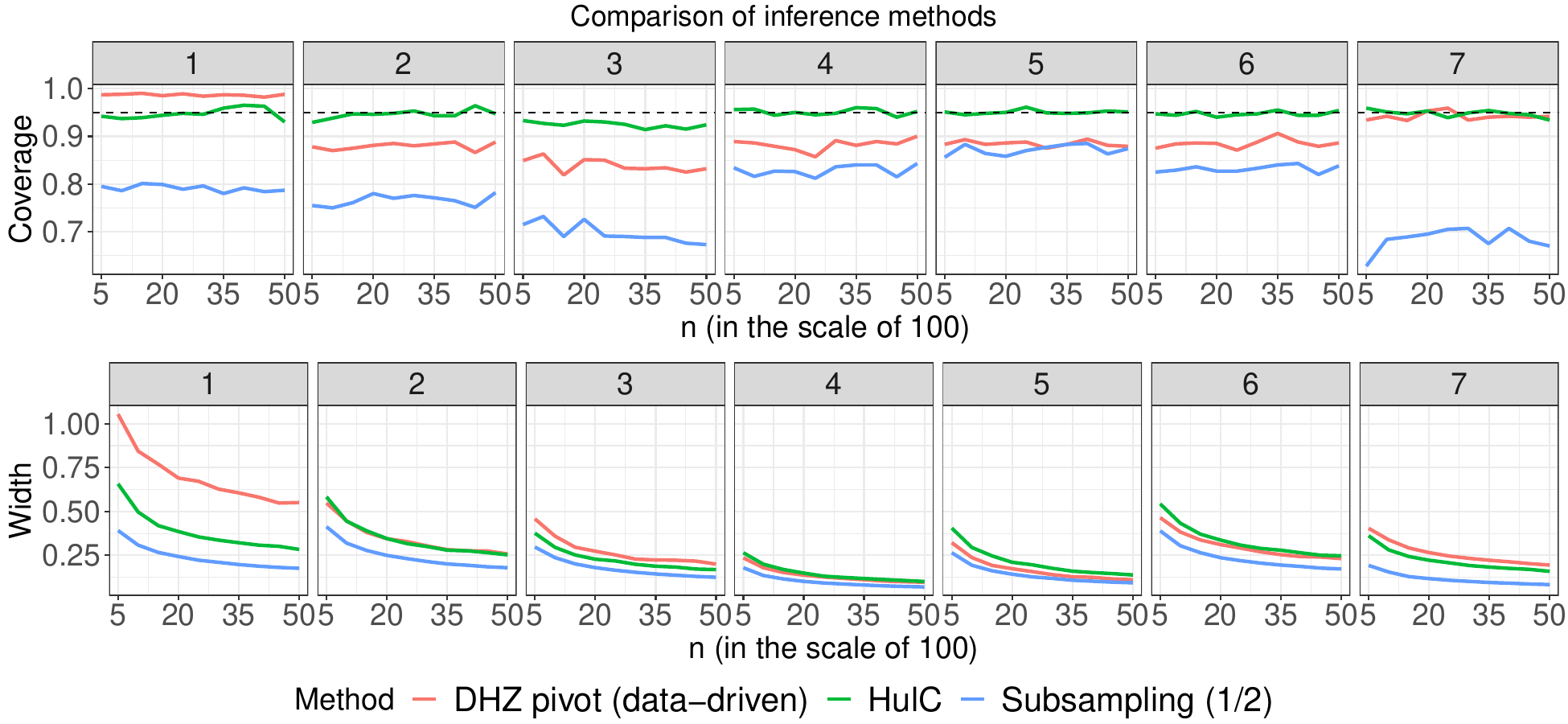}
    \caption{Comparison of methods to construct confidence intervals for monotonic regression estimator. The top of each panel is the model index as mentioned in Table~\ref{table:more_exmp}. The top plot compares the expected coverage of CI constructed vs. sample size $n$. The bottom plot compares the expected width of CI constructed vs. sample size $n$.
    }
    \label{fig:compare_more_exmp}
\end{figure}
\begin{table}[!htb]
\renewcommand{\arraystretch}{1.15}
\centering
\footnotesize
\begin{tabular}{cccccc}
 \hline
 Model  & $\psi(x)$ & $s_n$ & $X$  & $\eps|X$ & $f_{0,n}$ \\ [1ex]
 \hline
 1 & $|x|^{3/2}\sign(x)$ & $n^{1/3}$ & $\unif(-1,1)$ &  $N(0,2|X| + 1)$ & $ \sqrt{n/s_n}\psi\big( s_nX\big) $\\
 2 & $x + \sin(x)$ & $n^{1/3}$ & $\unif(-1,1)$ & $N(0,1/2)$ & $ \sqrt{n/s_n}\psi\big( s_nX\big) $\\
 3 & $5\big(F_{1.2,2.45}(x/2+1/2)$ & $n^{1/3}$ & $\unif(-1,1)$ & $N(0,1/2)$ & $ \sqrt{n/s_n}\psi\big( s_nX\big) $\\
& $- F_{1.2,2.45}(0)\big)$ & & &  &\\
 4 & $x$ & $ n^{1/5}$ & $\unif(-1,1)$ & $N(0,1/2)$  & $ n^{-1/5}X + X^3$ \\
 5 & $ x + x^3 + x^5$ & $\log\log(n)$ & $\unif(-1,1)$ & $N(0,1/2)$  & $ \sqrt{n/s_n}\psi\big( s_nX\big) $ \\
 6 &$|x|^{3/2}\sign(x)$ & $n^{1/3}$ & $2\cdot\text{Beta}(2,3) - 1$ & $N(0,1/2)$  & $ \sqrt{n/s_n}\psi\big( s_nX\big) $\\
 7 &  $|x|^{3/2}\sign(x)$ & $n^{1/3}$ & $\unif(-1,1)$ & $ (N(|X|, 1))^2 - X^2 - 1$  & $ \sqrt{n/s_n}\psi\big( s_nX\big) $\\ [1ex]
 \hline
\end{tabular}
\caption{Table of additional simulation models used to compare methods in Figure~\ref{fig:compare_more_exmp}.}
\label{table:more_exmp}
\end{table}
\section{Proof of Proposition~\ref{prop:uniform-boundedness}\label{appsec:proof-of-uniform-boundedness}}
Because $\widehat{f}_n(\cdot)$ is a non-decreasing function and $\widehat{f}_n(x) = \widehat{f}_n(X_{1:n})$ for all $x \le X_{1:n}$, $\widehat{f}_n(x) = \widehat{f}_n(X_{n:n})$ for all $x \ge X_{n:n}$, it follows that
\[
\|\widehat{f}_n\|_{\infty} = \max\{|\widehat{f}_n(X_{1:n})|,\,|\widehat{f}_n(X_{n:n})|\}.
\]
Moreover, from~\eqref{eq:min-max-formula}, we conclude that
\begin{align*}
\widehat{f}_n(X_{1:n}) &= \min_{j\ge1}\frac{1}{j}\sum_{k=1}^j \Yo{k}\\
\widehat{f}_n(X_{n:n}) &= \max_{i\ge1}\,\frac{1}{n-i+1}\sum_{k=i}^{n} \Yo{k} = \max_{j\ge1}\frac{1}{j}\sum_{k=1}^{j} \Yo{n-k+1}.
\end{align*}
If $n \le 3$, then $\|\widehat{f}_n\|_{\infty} \le \max_{1\le i\le n}|Y_i|$ and $\mathbb{E}[\|\widehat{f}_n\|_{\infty}^p] \le \sum_{i=1}^n \mathbb{E}[|\mu(X_i)|^p] + \sum_{i=1}^n \mathbb{E}[|\xi_i|^p] \le 3(\mathbb{E}[|\mu(X)|^p] + \mathbb{E}[\eta^p(X)])$. We now prove the result for $n\ge3$. Set $J_n = \lfloor\log_2(n)\rfloor - 1$. Therefore,
\begin{align*}
\|\widehat{f}_n\|_{\infty} ~&\le~ \max\left\{\max_{j\ge1}\,\left|\frac{1}{j}\sum_{k=1}^j Y_{[k:n]}\right|,\,\max_{j\ge1}\,\left|\frac{1}{j}\sum_{k=1}^j \Yo{n-k+1}\right|\right\}\\
&\le~ \max_{j\ge1}|\mu(\Xo{j})| + \max\left\{\max_{j\ge1}\,\left|\frac{1}{j}\sum_{k=1}^j \xio{k}\right|,\,\max_{j\ge1}\,\left|\frac{1}{j}\sum_{k=1}^j \xio{n-k+1}\right|\right\}\\
&\le~ \max_{j\ge1}|\mu(\Xo{j})| \\
&\quad+ \max\Bigg\{\max_{0 \le \ell \le J_n}\frac{\max_{2^\ell \le j< 2^{\ell + 1}}|\sum_{k=1}^{j}\xio{k}|}{2^{\ell}},\, \max_{2^{J_n+1} \le j \le n} \frac{|\sum_{k=1}^j \xio{k}|}{2^{J_n+1}},\\
&\quad \max_{0 \le \ell \le J_n}\frac{\max_{2^\ell \le j< 2^{\ell + 1}}|\sum_{k=1}^{j}\xio{n-k+1}|}{2^{\ell}},\, \max_{2^{J_n+1} \le j\le n} \frac{|\sum_{k=1}^j \xio{n-k+1}|}{2^{J_n+1}}
\Bigg\}.
\end{align*}
This implies that
\begin{align*}
&\left(\mathbb{E}[\|\widehat{f}_n\|_{\infty}^p]\right)^{1/p} \le \left(\mathbb{E}\left[\max_{j\ge1}|\mu(X_{j:n})|^p\right]\right)^{1/p}\\
&\quad+\Bigg(\sum_{\ell = 0}^{\lfloor \log_2(n)\rfloor}\frac{1}{2^{\ell p}}\mathbb{E}\left[\max_{2^{\ell} \le j < 2^{\ell + 1}}\left|\sum_{k=1}^j \xio{k}\right|^p\right] + \frac{1}{2^{(J_n+1) p}}\mathbb{E}\left[\max_{2^{J_n+1} \le j\le n}\left|\sum_{k=1}^j \xio{k}\right|^p\right]\\
&\quad+ \sum_{\ell = 0}^{\lfloor \log_2(n)\rfloor}\frac{1}{2^{\ell p}}\mathbb{E}\left[\max_{2^{\ell} \le j < 2^{\ell + 1}}\left|\sum_{k=1}^j \xio{n-k+1}\right|^p\right] + \frac{1}{2^{(J_n+1) p}}\mathbb{E}\left[\max_{2^{J_n+1} \le j\le n}\left|\sum_{k=1}^j \xio{n-k+1}\right|^p\right]\Bigg)^{1/p}.
\end{align*}
Note that $\xio{k}, 1\le k\le n$ are not independent random variables due to the potential dependence between errors and covariates, but they are conditionally independent given $X_1, \ldots, X_n$ under the continuity assumption of $x\mapsto \mathbb{P}(X \le x)$; see Lemma~1 of~\cite{bhattacharya1974convergence}. Furthermore, $\mathbb{P}(\xio{k} \le s|X_1, \ldots, X_n) = G_{\Xo{k}}(s)$, where $G_x(s):=\mathbb{P}(Y - \mu(X)\le s|X = x)$. This implies that conditional on $X_1, \ldots, X_n$, $\xio{k}, 1\le k\le n$ are mean zero independent random variables and $\mathbb{E}[|\xio{k}|^p|X_1, \ldots, X_n] = \eta^p(\Xo{k})$. This fact implies that $\{\sum_{k=1}^j \xio{k}: 2^\ell \le j < 2^{\ell + 1}\}$ is a martingale (conditional on $X_1, \ldots, X_n$) and Doob's maximal inequality implies, for example, that for $p > 1$,
\begin{align*}
\mathbb{E}\left[\max_{2^{\ell} \le j < 2^{\ell + 1}}\left|\sum_{k=1}^j \xio{k}\right|^p\right] &\le \mathbb{E}\left[\left|\sum_{k=1}^{2^{\ell + 1}} \xio{k}\right|^p\right].
\end{align*}
Theorem~2 of~\cite{von1965inequalities} implies that for $1 < p\le 2$,
\[
\mathbb{E}\left[\left|\sum_{k=1}^{2^{\ell + 1}} \xio{k}\right|^p\bigg|X_1, \ldots, X_n\right] \le 2\sum_{k=1}^{2^{\ell + 1}} \mathbb{E}[|\xio{k}|^p|X_1, \ldots, X_n] = 2\sum_{k=1}^{2^{\ell + 1}} \eta^p(X_{k:n}).
\]
Hence,
\[
\mathbb{E}\left[\max_{2^{\ell} \le j < 2^{\ell + 1}}\left|\sum_{k=1}^j \xio{k}\right|^p\right] \le 2\sum_{k=1}^{2^{\ell + 1}} \mathbb{E}\left[\eta^p(X_{k:n})\right].
\]
Similar calculations hold true for other terms as well. Therefore,
\begin{align*}
\left(\mathbb{E}[\|\widehat{f}_n\|_{\infty}^p]\right)^{1/p} &\le \left(\mathbb{E}\left[\max_{j\ge1}|\mu(X_{j:n})|^p\right]\right)^{1/p}\\
&\quad+2\Bigg(\sum_{\ell = 0}^{J_n}\frac{1}{2^{\ell p}}\sum_{k=1}^{2^{\ell + 1}}\mathbb{E}\left[\eta^p(\Xo{k})\right] + \frac{1}{2^{(J_n+1) p}}\sum_{k=1}^{n}\mathbb{E}\left[\eta^p(\Xo{k})\right]\\
&\quad+ \sum_{\ell = 0}^{J_n}\frac{1}{2^{\ell p}}\sum_{k=1}^{2^{\ell + 1}}\mathbb{E}\left[\eta^p(\Xo{n-k+1})\right] + \frac{1}{2^{(J_n+1) p}}\sum_{k=1}^{n}\mathbb{E}\left[\eta^p(\Xo{n-k+1})\right]\Bigg)^{1/p}.
\end{align*}
The first term is trivially bounded by $\|\mu\|_{\infty}$ and the second term is bounded by
\[
2^{1 + 1/p}\|\eta\|_{\infty}\left(\sum_{\ell = 0}^{J_n}\frac{2^{\ell + 1}}{2^{\ell p}} + \frac{n}{2^{(J_n + 1)p}}\right)^{1/p} \le 2^{1 + 2/p}\|\eta\|_{\infty}\left(\sum_{\ell = 0}^{\infty} \frac{1}{2^{\ell(p-1)}}\right)^{1/p} \le \|\eta\|_{\infty}\frac{2^{2 + 1/p}}{(2^{p-1} - 1)^{1/p}}.
\]
This completes the proof of the result.
\section{Proof of Theorem~\ref{thm:asymptotic-distribution-of-LSE}}\label{appsec:proof-of-thm-asymptotic-dist-LSE}
The basic structure of the proof of already presented in Section~\ref{subsec:proof-main-result}. Here we fill in the details.
Let $\widehat{H}_n$ denote the empirical CDF based on $\mathcal{X}$, i.e.,
\[
\widehat{H}_n(x) := \frac{1}{n}\sum_{i=1}^n \mathbf{1}\{X_i \le x\}.
\]
By assumption~\ref{assump:continuous-distribution-of-covariates}, for sufficiently large $n$, $\exists~\beta_l(n),~\beta_u(n)>0$, s.t.
\[H_n(x_0)-H_n(x_0-\beta_l(n))=H_n(x_0+\beta_u(n))-H_n(x_0)=\frac{c}{s_n}.\]
Again, we write $\beta_l(n)$ and $\beta_u(n)$ as $\beta_l$ and $\beta_u$ respectively. Finally, define
\begin{align*}
\delta_{l,-}(n)&:=|\{k:x_0-\beta_l(n)< X_{k:n}\leq x_0\}|,\\
\delta_{l,+}(n)&:=|\{k:x_0< X_{k:n}\leq x_0+\beta_l(n)\}|.
\end{align*}
The following lemma proves some properties of $\beta_l, \beta_u, \alpha_l, \alpha_u,$ and $\delta_{l,-}(n)$ under assumption~\ref{assump:continuous-distribution-of-covariates}.
\begin{lemma}\label{lem:smoothness-of-H}
Under assumption~\ref{assump:continuous-distribution-of-covariates}, for any fixed $c > 0$, as $n\to\infty$,
\[
s_n\beta_l(n)h_n(x_0) = c(1 + o(1)),\; s_n\beta_u(n)h_n(x_0) = c(1 + o(1)).
\]
Similarly, as $n\to\infty$
\[
s_n\alpha_l(n)h_n(x_0) = 2c(1 + o(1)),\; s_n\alpha_u(n)h_n(x_0) = 2c(1 + o(1)).
\]
The same conclusions continue to hold with $h_n(x_0)$ replaced by $h_0.$
Furthermore, $s_n\delta_{l,-}(n)/n=c(1 + o_p(1))$ and $s_n\delta_{l,+}/n~=~c(1 + o_p(1))$ as $n\to\infty$.
\end{lemma}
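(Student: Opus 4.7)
The four asymptotic statements for $\beta_l,\beta_u,\alpha_l,\alpha_u$ are of the same shape; only the constant differs ($c$ vs.\ $2c$). I will prove the claim for $\beta_l$ in detail and indicate how the other three follow by the same argument. First, I would note that by the defining equation $H_n(x_0)-H_n(x_0-\beta_l(n))=c/s_n$, and because $s_n\to\infty$, $H_n$ is continuous, and $h_n$ is bounded away from zero on the neighbourhood $\mathcal{N}(x_0)$ for large $n$ (both from \ref{assump:continuous-distribution-of-covariates}), the quantity $\beta_l(n)$ exists for all sufficiently large $n$ and must satisfy $\beta_l(n)\to 0$ (otherwise $H_n(x_0)-H_n(x_0-\beta_l(n))$ would stay bounded away from $0$).

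Next, by the mean value theorem applied to $H_n$ in a shrinking neighbourhood of $x_0$ (which is eventually contained in $\mathcal{N}(x_0)$), there exists $\xi_n\in(x_0-\beta_l(n),x_0)$ such that
\[
\frac{c}{s_n}=H_n(x_0)-H_n(x_0-\beta_l(n))=\beta_l(n)\,h_n(\xi_n).
\]
Since $|\xi_n-x_0|\le\beta_l(n)\to 0$, the uniform-continuity-with-$n$ clause in \ref{assump:continuous-distribution-of-covariates} gives $h_n(\xi_n)-h_n(x_0)\to 0$, while $\liminf_n h_n(x_0)\ge\liminf_n\inf_{\mathcal N(x_0)}h_n>0$, so $h_n(\xi_n)/h_n(x_0)\to 1$. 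Rearranging yields $s_n\beta_l(n)h_n(x_0)=c(1+o(1))$. The argument for $\beta_u(n)$ is identical, and for $\alpha_l(n),\alpha_u(n)$ the same MVT step produces $2c$ in place of $c$. Finally, because $h_n(x_0)\to h_0\in(0,\infty)$, writing $h_n(x_0)=h_0(1+o(1))$ converts each of the four conclusions to its $h_0$-version.

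For the last two claims, I would view $\delta_{l,-}(n)=\sum_{i=1}^n \mathbf{1}\{x_0-\beta_l(n)<X_i\le x_0\}$ as a $\mathrm{Binomial}(n,p_n)$ count with $p_n=H_n(x_0)-H_n(x_0-\beta_l(n))=c/s_n$. Then
\[
\E[\delta_{l,-}(n)/n]=\frac{c}{s_n},\qquad \var(\delta_{l,-}(n)/n)\le\frac{c}{n s_n},
\]
so Chebyshev's inequality yields, for any $\varepsilon>0$,
\[
\Pr\!\left(\left|\frac{s_n\delta_{l,-}(n)}{n}-c\right|>\varepsilon\right)\le \frac{s_n^2\var(\delta_{l,-}(n)/n)}{\varepsilon^2}\le \frac{c\,s_n}{\varepsilon^2 n}\longrightarrow 0,
\]
using $s_n/n\to 0$ from \ref{assump:continuity-of-f_{0,n}}. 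This gives $s_n\delta_{l,-}(n)/n=c(1+o_p(1))$, and the identical argument (with $p_n=H_n(x_0+\beta_l(n))-H_n(x_0)=c/s_n$, which holds in the limit by the same MVT-plus-continuity step applied to the right-hand increment) delivers $s_n\delta_{l,+}(n)/n=c(1+o_p(1))$.

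\textbf{Main obstacle.} There is no serious analytic difficulty; the proof is essentially a careful bookkeeping of the uniform-in-$n$ continuity clause \ref{assump:continuous-distribution-of-covariates}. The only subtlety worth flagging is making sure the intermediate point $\xi_n$ from the MVT lies in $\mathcal N(x_0)$ for all large $n$ (so that the uniform statement on $h_n$ applies) and that $\beta_l(n)$ is well-defined and tends to zero before invoking the MVT—both are handled by a preliminary argument using the positive lower bound on $h_n$ over $\mathcal N(x_0)$.
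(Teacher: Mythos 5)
Your proposal is correct and follows essentially the same route as the paper: the mean value theorem plus the uniform-in-$n$ continuity of $h_n$ near $x_0$ for the deterministic claims, and the $\mathrm{Binomial}(n,c/s_n)$ representation with a variance/Chebyshev bound for $\delta_{l,-}$ and $\delta_{l,+}$. Your added care about the existence of $\beta_l(n)$, its convergence to zero, and the fact that the increment defining $\delta_{l,+}$ uses $\beta_l(n)$ rather than $\beta_u(n)$ (so its success probability is only asymptotically $c/s_n$) are worthwhile refinements of details the paper leaves implicit.
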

\begin{proof}
Observe by the mean value theorem, that
\begin{align*}
&H_n(x_0)-H_n(x_0-\beta_l(n))=H_n(x_0+\beta_u(n))-H_n(x_0)=\frac{c}{s_n}\\
&\implies H_n'(x_0+\xi_l(n))\beta_l(n)=H_n'(x_0+\xi_u(n))\beta_u(n)=\frac{c}{s_n}
\end{align*}
Here $\xi_l(n)\in(-\beta_l(n),~0),~\xi_u(n)\in(0,~\beta_u(n))~\forall~n\in\mathbb{N}$. Note that, as $\beta_l(n)\to 0$ and $\beta_u(n)\to 0$ as $n\to\infty$, $\xi_l(n)\to 0$ and $\xi_u(n)\to 0$ as $n\to\infty$. Because $H'$ is continuous in a neighborhood $\mathcal{N}'(x_0)$ of $x_0$, for sufficiently large enough $n$, $H_n'(x_0+\xi_l(n))=H_n'(x_0)+\eta_l(n) $ and $H_n'(x_0+\xi_u(n))=H_n'(x_0)+\eta_u(n)$, where $\lim_{n\to\infty}\eta_l(n)=\lim_{n\to\infty}\eta_u(n)=0$.  Hence,
\begin{equation}
s_n\beta_l(n)h_n(x_0)=c(1+o(1))\quad\mbox{and}\quad s_n\beta_u(n)h_n(x_0)=c(1+o(1)),\label{beta1}
\end{equation}
as $n\to\infty$.
Using the same logic,
\begin{equation}s_n\alpha_l(n)h_n(x_0)=2c(1+o(1))\quad\mbox{and}\quad s_n\alpha_u(n)h_n(x_0)=2c(1+o(1)),\label{alpha1}\end{equation}
as $n\to\infty$.
To prove the limit for $\delta_{l,-}(n)$, note that $\delta_{l,-}(n)\sim\mbox{Binomial}(n, c/s_n)$. Hence,
\[
\mathbb{E}\left[\frac{s_n\delta_{l,-}(n)}{n}\right] = c,\quad\mbox{and}\quad \mbox{Var}\left(\frac{s_n\delta_{l,-}(n)}{n}\right) = \frac{s_n^2}{n^2}n(c/s_n)(1 - c/s_n) \le c\frac{s_n}{n}.
\]
Because $cs_n/n\to0$ as $n\to\infty$, the variance converges to zero and we conclude that $s_n\delta_{l,-}(n)/(nc) - 1 = O_p(\sqrt{s_n/n}) = o_p(1)$.
The proof for $\delta_{l,+}$ is nearly identical and is omitted.
\end{proof}
\begin{lemma}\label{lem:control-of-v_n}
Define
\[
v_n = \frac{1}{\delta_{l,-}(n)}\sum_{k:x_0-\beta_l(n)< X_{k:n}\leq x_0}(f_{0,n}(X_{k:n})-f_{0,n}(x_0-\beta_l(n))).
\]
Under assumptions~\ref{assump:continuous-distribution-of-covariates} and~\ref{assump:continuity-of-f_{0,n}},
\begin{equation}\label{eq:main-lemma-limit-of-v_n}
\sqrt{\frac{n}{s_n}}v_n ~\overset{p}{\to}~ \left[-\psi(-c/h_0) - (h_0/c)\Psi(-c/h_0)\right]\quad\mbox{as}\quad n\to\infty.
\end{equation}
\end{lemma}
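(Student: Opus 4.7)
The plan is to split $v_n$ into a deterministic shift and a centered empirical average, handle each separately via a change of variables, and combine through Slutsky's lemma. Decompose
\[
v_n = \bar v_n + \bigl(f_{0,n}(x_0) - f_{0,n}(x_0 - \beta_l(n))\bigr),\quad \bar v_n := \frac{1}{\delta_{l,-}(n)}\!\!\!\sum_{k:X_{k:n}\in(x_0-\beta_l(n),\,x_0]}\!\!\!(f_{0,n}(X_{k:n})-f_{0,n}(x_0)).
\]
Lemma~\ref{lem:smoothness-of-H} gives $-s_n\beta_l(n) \to -c/h_0$, so taking $c_n := -s_n\beta_l(n)$ in assumption~\ref{assump:continuity-of-f_{0,n}} yields $\sqrt{n/s_n}\bigl(f_{0,n}(x_0 - \beta_l(n)) - f_{0,n}(x_0)\bigr) \to \psi(-c/h_0)$, so the shift contributes $-\psi(-c/h_0)$ to the target limit.

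For the average, set $S_n := \sum_{i=1}^n (f_{0,n}(X_i)-f_{0,n}(x_0))\mathbf{1}\{X_i\in(x_0-\beta_l(n),x_0]\}$, so that $\bar v_n = S_n/\delta_{l,-}(n)$. The plan is to show $\sqrt{s_n/n}\, S_n \convp -h_0\Psi(-c/h_0)$ by a mean-plus-variance argument. A change of variables $v = s_n(x-x_0)$ gives
\[
\mathbb{E}\!\bigl[\sqrt{s_n/n}\, S_n\bigr] = \sqrt{n/s_n}\int_{-s_n\beta_l(n)}^{0}\bigl(f_{0,n}(x_0+v/s_n)-f_{0,n}(x_0)\bigr)\, h_n(x_0+v/s_n)\, dv.
\]
Replacing $h_n(x_0+v/s_n)$ by $h_0$ with an $o(1)$ error (assumption~\ref{assump:continuous-distribution-of-covariates}), the remaining integrand converges pointwise to $\psi(v)$ on $(-c/h_0,0)$ by assumption~\ref{assump:continuity-of-f_{0,n}}; monotonicity of $f_{0,n}$ upgrades pointwise to uniform convergence on compacts, and the integral converges to $h_0\int_{-c/h_0}^{0} \psi(v)\, dv = -h_0\Psi(-c/h_0)$. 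This is essentially what part 3 of Proposition~\ref{prop:assumptions-implications} supplies after identifying the substitution.

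For the variance, the same substitution gives
\[
\mathrm{Var}\bigl(\sqrt{s_n/n}\,S_n\bigr) \le \int_{-s_n\beta_l(n)}^{0}\bigl(f_{0,n}(x_0+v/s_n)-f_{0,n}(x_0)\bigr)^{2} h_n(x_0+v/s_n)\,dv,
\]
and monotonicity of $f_{0,n}$ bounds the squared factor by $|f_{0,n}(x_0-\beta_l(n))-f_{0,n}(x_0)|^2 = O(s_n/n)$, so the integral is $O(s_n/n)=o(1)$. Chebyshev then yields $\sqrt{s_n/n}\, S_n \convp -h_0\Psi(-c/h_0)$. Writing $\sqrt{n/s_n}\,\bar v_n = (\sqrt{s_n/n}\, S_n)/(s_n\delta_{l,-}(n)/n)$ and using $s_n\delta_{l,-}(n)/n \convp c$ from Lemma~\ref{lem:smoothness-of-H}, Slutsky gives $\sqrt{n/s_n}\,\bar v_n \convp -(h_0/c)\Psi(-c/h_0)$, and adding the shift contribution proves~\eqref{eq:main-lemma-limit-of-v_n}.

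The chief technical obstacle is justifying the passage to the limit inside the mean integral, because assumption~\ref{assump:continuity-of-f_{0,n}} only guarantees pointwise convergence of $\sqrt{n/s_n}(f_{0,n}(x_0+v/s_n)-f_{0,n}(x_0))$ to $\psi(v)$, and only on $\mathbb{R}\setminus\{0\}$. Monotonicity of each $f_{0,n}$ lets one upgrade this to uniform convergence on compacts of $(-c/h_0, 0)$ via a standard Polya-type argument for monotone functions, while the contribution from a shrinking neighborhood of $0$ is controlled by the endpoint envelope $|f_{0,n}(x_0-\beta_l(n))-f_{0,n}(x_0)|$ together with $\psi(0)=0$; together these justify both the mean limit and the $O(s_n/n)$ variance bound.
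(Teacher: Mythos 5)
Your proposal is correct and follows essentially the same route as the paper's proof: both reduce the claim to (i) the limit $s_n\delta_{l,-}(n)/n \convp c$ from Lemma~\ref{lem:smoothness-of-H}, (ii) convergence of the population integral via assumption~\ref{assump:continuity-of-f_{0,n}} together with part 3 of Proposition~\ref{prop:assumptions-implications}, and (iii) a Chebyshev/variance bound showing the empirical fluctuation is negligible, with Slutsky handling the random denominator. The only difference is cosmetic --- you peel off the deterministic shift $f_{0,n}(x_0)-f_{0,n}(x_0-\beta_l(n))$ before centering the sum at $f_{0,n}(x_0)$, whereas the paper keeps the centering at $f_{0,n}(x_0-\beta_l(n))$ and splits the sum into integrals against $dH_n$ and $d(\widehat{H}_n-H_n)$ --- but the estimates are the same.
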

\begin{proof}
Note that
\begin{align*}
\sqrt{\frac{n}{s_n}}v_n&=\sqrt{\frac{n}{s_n}}\frac{\sum_{k:x_0-\beta_l(n)\leq X_{k:n}\leq x_0}(f_{0,n}(X_{k:n})-f_{0,n}(x_0-\beta_l(n)))}{|k:x_0-\beta_l(n)\leq X_{k:n}\leq x_0|}\\
&=\frac{n}{s_n\delta_{l,-}(n)}s_n\sqrt{\frac{n}{s_n}}\int_{\left(x_0-\beta_l(n),~x_0\right]}\left(f_{0,n}(x)-f_{0,n}(x_0-\beta_l(n))\right)d\widehat{H}_n(x)\\
&=\frac{n}{s_n\delta_{l,-}(n)}\sqrt{ns_n}\int_{\left(x_0-\beta_l(n),~x_0\right]}\left(f_{0,n}(x)-f_{0,n}(x_0-\beta_l(n))\right)dH_n(x)\\&\quad+\frac{n}{s_n\delta_{l,-}(n)}\sqrt{ns_n}\int_{\left(x_0-\beta_l(n),~x_0\right]}\left(f_{0,n}(x)-f_{0,n}(x_0-\beta_l(n))\right)d(\widehat{H}_n - H_n)(x)\\
&=\frac{n}{s_n\delta_{l,-}(n)}(I_1+I_2)~~(\text{say}).
\end{align*}
By Lemma~\ref{lem:smoothness-of-H}, $s_n\delta_{l,-}(n)/n = c(1 + o_p(1))$ as $n\to\infty$ and hence, it suffices to show that as $n\to\infty$,
\[
I_1 := \sqrt{ns_n}\int_{\left(x_0-\beta_l(n),~x_0\right]}\left(f_{0,n}(x)-f_{0,n}(x_0-\beta_l(n))\right)dH_n(x) ~\to~ -c\psi(-c/h_0) - h_0\Psi(-c/h_0),
\]
and
\[
I_2 := \sqrt{ns_n}\int_{\left(x_0-\beta_l(n),~x_0\right]}\left(f_{0,n}(x)-f_{0,n}(x_0-\beta_l(n))\right)d(\widehat{H}_n - H_n)(x) \overset{p}{\to} 0.
\]
Clearly (from~\ref{assump:data-model-assumption}), $I_1 \ge 0$. Because $(x_0 - \beta_l(n), x_0]$ belongs to $\mathcal{N}(x_0)$ for large enough $n$, we conclude that for large enough $n$,
\begin{equation}\label{eq:I_1-bounds}
I_1 \in \sqrt{ns_n}\int_{(x_0 - \beta_{l}(n), x_0]} (f_{0,n}(x) - f_{0,n}(x_0 - \beta_l(n)))dx\times\left[\inf_{|x - x_0| \le \beta_l(n)}h_n(x),\, \sup_{|x - x_0| \le \beta_l(n)}h_n(x)\right].
\end{equation}
Because $\beta_l(n)\to0$ as $n\to\infty$, we obtain as $n\to\infty$,
\begin{align*}
\inf_{|x - x_0| \le \beta_l(n)}h_n(x) &\ge h_n(x_0) - \sup_{|x - x_0| \le \beta_l(n)}|h_n(x) - h_n(x_0)| \to h_0,\\
\sup_{|x - x_0| \le \beta_l(n)}h_n(x) &\le h_n(x_0) + \sup_{|x - x_0| \le \beta_l(n)}|h_n(x) - h_n(x_0)| \to h_0.
\end{align*}
Now consider the integral on the right-hand side of~\eqref{eq:I_1-bounds} can be controlled using assumption~\ref{assump:continuity-of-f_{0,n}} and Proposition~\ref{prop:assumptions-implications}:
\begin{align*}
    &\int_{(x_0 - \beta_{l}(n), x_0]} (f_{0,n}(x) - f_{0,n}(x_0 - \beta_l(n)))dx\\
    &= \int_{-\beta_l(n)}^{0} (f_{0,n}(x_0 + t) - f_{0,n}(x_0))dt + \beta_l(n)(f_{0,n}(x_0) - f_{0,n}(x_0 - \beta_l(n)))\\
    &= \beta_l(n)\int_{-1}^0 (f_{0,n}(x_0 + t{s_n\beta_l(n)}/{s_n}) - f_{0,n}(x_0))dt+\beta_l(n)(f_{0,n}(x_0) - f_{0,n}(x_0 - \beta_l(n))).
\end{align*}
Because $s_n\beta_l(n)\to c/h_0$ as $n\to\infty$ by Lemma~\ref{lem:smoothness-of-H}, we obtain from Part 3 of Proposition~\ref{prop:assumptions-implications} that
\[
\sqrt{\frac{n}{s_n}}\int_{-1}^0 (f_{0,n}(x_0 + t{s_n\beta_l(n)}/{s_n}) - f_{0,n}(x_0))dt \to -\frac{\Psi(-c/h_0)}{(c/h_0)},\quad\mbox{as}\quad n\to\infty,
\]
and by assumption~\ref{assump:continuity-of-f_{0,n}},
\[
\sqrt{\frac{n}{s_n}}(f_{0,n}(x_0) - f_{0,n}(x_0 - s_n\beta_l(n)/s_n)) \to -\psi(-c/h_0),\quad\mbox{as}\quad n\to\infty.
\]
Therefore, as $n\to\infty$, we get
\[
\sqrt{ns_n}\int_{(x_0 - \beta_{l}(n), x_0]} (f_{0,n}(x) - f_{0,n}(x_0 - \beta_l(n)))dx \to -(c/h_0)\psi(-c/h_0) - \Psi(-c/h_0).
\]
To control $I_2$, note that
\begin{align*}
\mbox{Var}\left(I_2\right) &= s_n\int_{(x_0 - \beta_l(n), x_0]} (f_{0,n}(x) - f_{0,n}(x_0 - \beta_l(n)))^2dH_n(x)\\
&\le s_n(f_{0,n}(x_0) - f_{0,n}(x_0 - \beta_l(n)))\int_{(x_0 - \beta_l(n), x_0]} (f_{0,n}(x) - f_{0,n}(x_0 - \beta_{l}(n)))dH_n(x)\\
&= \sqrt{\frac{s_n}{n}}(f_{0,n}(x_0) - f_{0,n}(x_0 - \beta_l(n)))I_1\\
&= \frac{s_n}{n}\sqrt{\frac{n}{s_n}}(f_{0,n}(x_0) - f_{0,n}(x_0 - \beta_l(n)))I_1
\end{align*}
By assumption~\ref{assump:continuity-of-f_{0,n}}, we have
\[
\sqrt{\frac{n}{s_n}}(f_{0,n}(x_0) - f_{0,n}(x_0 - \beta_l(n))) ~\to~ -\psi(-c/h_0),\quad\mbox{as}\quad n\to\infty.
\]
Because $s_n/n\to0$, we conclude that $\mbox{Var}(I_2) \to 0$ and because $\mathbb{E}[I_2] = 0$, we obtain that $I_2\overset{p}{\to} 0$ as $n\to\infty.$
\end{proof}
\begin{lemma}\label{lem:Doob's-maximal-inequality}
Suppose $Z_i$ are mean zero independent random variables with finite second moment. Let $S_j^{'}=\sum_{i=1}^j Z_i$. Then,
\[\mathbb{P}\left(\max_{m\leq j\leq n}|S_j'|>t\right)\leq\frac{4\sum_{i=1}^n \mathbb{E}[Z_j^2]}{t^2}\]
\end{lemma}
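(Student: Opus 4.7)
The plan is to recognize $(S_j')_{j\ge 1}$ as a square-integrable martingale with respect to the natural filtration $\mathcal{F}_j := \sigma(Z_1, \ldots, Z_j)$. The martingale property follows immediately from the independence of $\{Z_i\}$ and $\mathbb{E}[Z_i]=0$: indeed, $\mathbb{E}[S_j' \mid \mathcal{F}_{j-1}] = S_{j-1}' + \mathbb{E}[Z_j] = S_{j-1}'$. Moreover, $|S_j'|$ is then a non-negative submartingale (since $|\cdot|$ is convex).

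The first step is to drop the lower bound $m$ in the maximum using the trivial inequality
\[
\max_{m\le j\le n}|S_j'| ~\le~ \max_{1\le j\le n}|S_j'|,
\]
which shows it suffices to bound the probability on the right-hand side.

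Next, apply Doob's $L^2$ maximal inequality to the submartingale $|S_j'|$ to obtain
\[
\mathbb{E}\left[\max_{1\le j\le n}|S_j'|^2\right] ~\le~ \left(\tfrac{2}{2-1}\right)^2 \mathbb{E}[|S_n'|^2] ~=~ 4\,\mathbb{E}[|S_n'|^2].
\]
Markov's inequality then yields
\[
\mathbb{P}\left(\max_{1\le j\le n}|S_j'|>t\right) ~\le~ \frac{\mathbb{E}\left[\max_{1\le j\le n}|S_j'|^2\right]}{t^2} ~\le~ \frac{4\,\mathbb{E}[|S_n'|^2]}{t^2}.
\]
Finally, the independence of $\{Z_i\}$ and the mean-zero assumption give $\mathbb{E}[|S_n'|^2] = \sum_{i=1}^n \mathbb{E}[Z_i^2]$, which combined with the previous display completes the argument.

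There is no substantial obstacle here; this is a routine consequence of the standard $L^2$ maximal inequality. The only minor point worth noting is that one could obtain the sharper constant $1$ rather than $4$ by appealing directly to Kolmogorov's maximal inequality, but the weaker factor of $4$ stated in the lemma is already more than enough for its intended use (controlling variance-type tail probabilities in the concomitant arguments elsewhere in the paper).
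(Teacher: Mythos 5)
Your proposal is correct and follows essentially the same route as the paper: reduce $\max_{m\le j\le n}$ to $\max_{1\le j\le n}$, apply Doob's $L^2$ maximal inequality to the submartingale $|S_j'|$ to get the factor $4$, combine with Markov's inequality, and use independence to identify $\mathbb{E}[|S_n'|^2]$ with $\sum_{i=1}^n\mathbb{E}[Z_i^2]$. If anything, your write-up is slightly cleaner, since the paper phrases the first step as a Chebyshev bound via $\mathrm{Var}(\max_j|S_j'|)$ before passing to the second moment, whereas you apply Markov to the square directly.
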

\begin{proof}
By Chebyshev's inequality, we have
\[
\mathbb{P}\left(\max_{m\leq j\leq n}|S_j'|>t\right) \le \frac{1}{t^2}\mbox{Var}\left(\max_{m \le j\le n}|S_j'|\right).
\]
Because $\{S_j'\}_{j\ge1}$ is a martingale, we get that $\{|S_j'|\}_{j\ge1}$ is a submartingale and hence, Doob's maximal inequality implies that
\[
\mbox{Var}\left(\max_{m \le j\le n}|S_j'|\right) \le \mathbb{E}\left[\max_{m\le j\le n}|S_j'|^2\right] \le \mathbb{E}\left[\max_{1\le j\le n}|S_j'|^2\right] \le 4\mathbb{E}[|S_n'|^2] = 4\sum_{i=1}^n \mathbb{E}[Z_j^2].
\]
\end{proof}
\begin{lemma}\label{lem:Hajek-Renyi-special-case}
Suppose $Z_1, \ldots, Z_n$ are mean zero independent random variables such that $\mathbb{E}[Z_j^2] \le \sigma^2$ for all $1\le j\le n$.  Then
\[\mathbb{P}\left(\max_{m\leq j\leq n}\frac{S_j'}{j}>t\right)\leq\frac{4\sigma^2}{t^2m}\]
\end{lemma}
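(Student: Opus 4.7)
The approach is dyadic peeling, using the preceding Lemma~\ref{lem:Doob's-maximal-inequality} as the key building block. First, I would replace the one-sided maximum by the two-sided one, $\max_{m \le j \le n} S_j'/j \le \max_{m \le j \le n} |S_j'|/j$, so that martingale maximal inequalities become directly applicable. Next, I would decompose the range $[m, n]$ into dyadic blocks $B_k := \{ j : m\,2^k \le j < m\,2^{k+1}\}$ for $k = 0, 1, \ldots, K$, where $K$ is the smallest integer with $m\,2^{K+1} \ge n$. On each $B_k$, since $j \ge m\,2^k$, the elementary bound $|S_j'|/j \le |S_j'|/(m\,2^k)$ gives the inclusion
\begin{equation*}
\Bigl\{\max_{j \in B_k} |S_j'|/j > t\Bigr\} \;\subseteq\; \Bigl\{\max_{1 \le j \le m\,2^{k+1}} |S_j'| > t\,m\,2^k\Bigr\}.
\end{equation*}

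Applying Lemma~\ref{lem:Doob's-maximal-inequality} to the right-hand event (and using $\mathbb{E}[Z_i^2] \le \sigma^2$) bounds the probability of each block-level event by
\begin{equation*}
\frac{4\,(m\,2^{k+1})\,\sigma^2}{t^2\,m^2\,2^{2k}} \;=\; \frac{8\,\sigma^2}{t^2\,m\,2^k},
\end{equation*}
and a union bound followed by summation of the geometric series $\sum_{k \ge 0} 2^{-k} = 2$ yields an overall bound of the form $C\sigma^2/(t^2 m)$. To sharpen the constant to exactly $4$ as stated, I would either (i) use Kolmogorov's inequality (constant $1$ in place of the $4$ that appears in Lemma~\ref{lem:Doob's-maximal-inequality}, valid because $\{(S_j')^2 - \sum_{i \le j} \mathbb{E}[Z_i^2]\}$ is a martingale, so $\{(S_j')^2\}$ is a non-negative submartingale and Doob's inequality applies directly without the $L^2$ Doob step), which replaces the $8$ above by $2$ and produces exactly $4\sigma^2/(t^2 m)$ after summation, or (ii) invoke the H\'ajek--R\'enyi inequality with $c_j = j$, which gives the stronger bound $\sigma^2(1/m + \sum_{j > m} 1/j^2)/t^2 \le 2\sigma^2/(t^2 m)$ in one shot.

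No step poses a real obstacle: the entire content lies in the previous lemma, and the dyadic peeling is routine. The only point requiring care is constant tracking, since a naive application of Lemma~\ref{lem:Doob's-maximal-inequality} combined with dyadic summation produces $16\sigma^2/(t^2 m)$ rather than $4\sigma^2/(t^2 m)$; matching the stated constant requires using the sharp Kolmogorov form of the maximal inequality within each dyadic block.
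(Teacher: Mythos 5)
Your proposal follows essentially the same route as the paper: a dyadic decomposition of the index range $[m,n]$, the elementary inclusion reducing the blockwise maximum of $S_j'/j$ to a maximum of $S_j'$ over a threshold proportional to the left endpoint of the block, an application of the preceding maximal inequality (Lemma~\ref{lem:Doob's-maximal-inequality}) on each block, and a geometric-series summation. Your constant-tracking observation is worth highlighting, though: the paper's own proof carries out exactly the naive version of this argument and its final displayed bound is $16\sigma^2/(t^2 m)$, which does not match the $4\sigma^2/(t^2 m)$ asserted in the lemma statement. Your proposed repair --- replacing the $L^2$ Doob step (constant $4$) by Kolmogorov's inequality (constant $1$, valid since $(S_j')^2$ is a non-negative submartingale), or simply invoking the H\'ajek--R\'enyi inequality with weights $c_j = j$ --- is correct and does recover the stated constant, so your write-up is in this respect tighter than the paper's. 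The discrepancy is harmless downstream (the lemma is used in~\eqref{eq:Markov-inequality} only to show certain limsups vanish as $c\to\infty$, where the constant is immaterial), but it is a genuine slip in the paper that your proof avoids.
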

\begin{proof}
Observe that
\begin{align*}
\mathbb{P}\left(\max_{m\leq j\leq n}\frac{S_j'}{j}>t\right)&\le\mathbb{P}\left(\bigcup_{k=1}^{\lceil \log_2\left(\frac{n}{m}\right)\rceil}\left\{\max_{2^{k-1}m\leq j\leq 2^km}\frac{S_j'}{j}>t\right\}\right)\\
&\leq \sum_{k=1}^{\lceil \log_2\left(\frac{n}{m}\right)\rceil}\mathbb{P}\left(\max_{2^{k-1}m\leq j\leq 2^km}\frac{S_j'}{j}>t\right)\\
&\leq \sum_{k=1}^{\lceil \log_2\left(\frac{n}{m}\right)\rceil}\mathbb{P}\left(\max_{2^{k-1}m\leq j\leq 2^km}S_j'>2^{k-1}mt\right)\\
&\leq 4\sum_{k=1}^{\lceil \log_2\left(\frac{n}{m}\right)\rceil}\frac{\sigma^22^{k-1}m}{2^{2k-2}m^2t^2}~~(\text{Using Lemma~\ref{lem:Doob's-maximal-inequality}})\\
&=\frac{16\sigma^2}{t^2m}\sum_{k=1}^{\lceil \log_2\left(\frac{n}{m}\right)\rceil}\frac{1}{2^k}\leq\frac{16\sigma^2}{t^2m}.
\end{align*}
\end{proof}
\begin{lemma}\label{lem:unequalness-of-fn-star-and-fhat}
For all $n\ge1$, we have
\begin{align*}
\{f_{n,c}^{*}(x_0)\neq \hat{f}_n(x_0)\}&\subseteq\left\{\min_{z\geq x_0}\mathrm{Av}_n\left(\left(x_0-\beta_l(n),~z\right]\right)< f_{0,n}(x_0-\beta_l(n))\right\}\\
&\quad\cup\left\{\max_{y\leq x_0-\alpha_l(n)}\mathrm{Av}_n\left([y,~x_0-\beta_l(n)]\right)> f_{0,n}(x_0-\beta_l(n))\right\}\\
&\quad\cup\left\{\max_{y\leq x_0}\mathrm{Av}_n\left(\left(y,~x_0+\beta_u(n)\right]\right)> f_{0,n}(x_0+\beta_u(n))\right\}\\
&\quad\cup\left\{\min_{z\geq x_0+\alpha_u(n)}\mathrm{Av}_n\left([x_0+\beta_u(n),~z]\right)<f_{0,n}(x_0+\beta_u(n))\right\}.
\end{align*}
\end{lemma}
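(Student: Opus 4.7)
My plan is to prove the contrapositive: on the intersection of the complements of the four events on the right-hand side, I show that $\hat{f}_n(x_0)=f_{n,c}^*(x_0)$. Write $M(i,j):=\mathrm{Av}_n([X_{i:n},X_{j:n}])$, let $i_0$ be the largest index with $X_{i_0:n}\le x_0$, $j_0$ the smallest with $X_{j_0:n}\ge x_0$, $i_l$ the largest with $X_{i_l:n}\le x_0-\alpha_l(n)$, and $j_u$ the smallest with $X_{j_u:n}\ge x_0+\alpha_u(n)$. Then
\[
\hat f_n(x_0)=\max_{1\le i\le i_0}\min_{j_0\le j\le n}M(i,j)\quad\text{and}\quad f_{n,c}^*(x_0)=\max_{i_l<i\le i_0}\min_{j_0\le j<j_u}M(i,j),
\]
so it suffices to show the outer max can be restricted to $i>i_l$ and the inner min to $j<j_u$.

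First I would restrict the inner minimum. Let $j_u^-$ be the largest index with $X_{j_u^-:n}\le x_0+\beta_u(n)$. By the continuity of $H_n$, almost surely no data value equals $x_0+\beta_u(n)$, so for any $i\le i_0$ and any $j\ge j_u$ the quantity $M(i,j)$ is a convex combination of $M(i,j_u^-)=\mathrm{Av}_n([X_{i:n},x_0+\beta_u(n)])$ and $\mathrm{Av}_n((x_0+\beta_u(n),X_{j:n}])$. The complement of event three, applied with $y$ just below $X_{i:n}$, yields the former $\le f_{0,n}(x_0+\beta_u(n))$; the complement of event four at $z=X_{j:n}$ yields the latter $\ge f_{0,n}(x_0+\beta_u(n))$. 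The convex combination therefore lies above its smaller summand, so $M(i,j)\ge M(i,j_u^-)$, and the inner min over $j_0\le j\le n$ coincides with that over $j_0\le j<j_u$ for every $i\le i_0$.

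Symmetrically, to restrict the outer maximum, let $i_l^+$ be the smallest index with $X_{i_l^+:n}>x_0-\beta_l(n)$. For any $i\le i_l$ and $j\ge j_0$, splitting $[X_{i:n},X_{j:n}]$ at $x_0-\beta_l(n)$ and applying the complements of events two and one gives
\[
\mathrm{Av}_n([X_{i:n},x_0-\beta_l(n)])\le f_{0,n}(x_0-\beta_l(n))\le \mathrm{Av}_n((x_0-\beta_l(n),X_{j:n}])=M(i_l^+,j),
\]
so the same convex-combination reasoning forces $M(i,j)\le M(i_l^+,j)$. Hence $\min_{j_0\le j<j_u}M(i,j)\le \min_{j_0\le j<j_u}M(i_l^+,j)$, and after the first step the outer max over $1\le i\le i_0$ agrees with the max over $i_l<i\le i_0$, giving $\hat f_n(x_0)=f_{n,c}^*(x_0)$.

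The chief obstacle is purely bookkeeping: careful handling of open versus closed endpoints and of the degenerate sample configurations in which one of the auxiliary indices $i_l^+$ or $j_u^-$ fails to land in the restricted range (for instance when $(x_0-\beta_l(n),x_0]$ contains no data). Under assumption~\ref{assump:continuous-distribution-of-covariates}, ties are a.s.\ absent and such configurations either sit on an a.s.-null set or are absorbed into one of the four listed events by an appropriate convention, so the claimed set inclusion is preserved.
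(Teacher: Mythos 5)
Your proof is correct and is essentially the paper's argument in contrapositive form: both split each averaging interval at $x_0-\beta_l(n)$ and $x_0+\beta_u(n)$, use that a block average is a convex combination of the averages over the two pieces, and compare those pieces to the thresholds $f_{0,n}(x_0-\beta_l(n))$ and $f_{0,n}(x_0+\beta_u(n))$ to show the outer max and inner min in the min--max formula can be restricted. The edge cases you flag (no data in $(x_0-\beta_l(n),x_0]$ or $[x_0,x_0+\beta_u(n)]$, and a tie at a split point) are glossed over in the paper's own proof in exactly the same way, so they do not distinguish the two arguments.
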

Lemma~\ref{lem:unequalness-of-fn-star-and-fhat} shows that the event that the restricted estimator $f_{n,c}^{*}(x_0)$ is not equal to the original one $\hat{f}(x_0)$, is a subset of the union of certain events. The fact that these events on the RHS have a small probability, will be shown in the next lemma.
\begin{proof}[Proof of Lemma~\ref{lem:unequalness-of-fn-star-and-fhat}]
Observe that
\begin{align*}
\{\hat{f}_n(x_0)=f_{n,c}^{*}(x_0)\}\supseteq &\left\{\min_{z\geq x_0}\text{Av}_n\left(\left(x_0-\beta_l(n),~z\right]\right)\geq \max_{y\leq x_0-\alpha_l(n)}\text{Av}_n\left([y,~x_0-\beta_l(n)]\right)\right\}\cap\\
&\left\{\max_{y\leq x_0}\text{Av}_n\left(\left(y,~x_0+\beta_u(n)\right]\right)\leq \min_{z\geq x_0+\alpha_u(n)}\text{Av}_n\left([x_0+\beta_u(n),~z]\right)\right\}.
\end{align*}
To see how this holds, let us deal with the first event on the RHS:
\begin{align*}
&\min_{z\geq x_0}\text{Av}_n\left(\left(x_0-\beta_l(n),~z\right]\right)\geq \max_{y\leq x_0-\alpha_l(n)}\text{Av}_n\left([y,~x_0-\beta_l(n)]\right)\\
\implies &\text{Av}_n\left(\left(x_0-\beta_l(n),~z\right]\right)\geq \text{Av}_n\left([y,~x_0-\beta_l(n)]\right)~~\forall~z\geq x_0,~y\leq x_0-\alpha_l(n)\\
\implies &\text{Av}_n\left(\left(x_0-\beta_l(n),~z\right]\right)\geq \min_{y\leq x_0-\alpha_l(n)}\text{Av}_n\left([y,~z]\right)~~\forall~z\geq x_0.
\end{align*}
Now, for any $z\geq x_0$,
\begin{align*}
\max_{y\leq x_0} \text{Av}_n\left([y,~z]\right)&=\max\left\{\max_{y\leq x_0-\alpha_l(n)} \text{Av}_n\left([y,~z]\right),~\max_{x_0-\alpha_l(n)<y\leq x_0}\text{Av}_n\left([y,~z]\right)\right\}\\
&\leq\max\left\{\text{Av}_n\left(\left(x_0-\beta_l(n),~z\right]\right),~\max_{x_0-\alpha_l(n)<y\leq x_0}\text{Av}_n\left([y,~z]\right)\right\}\\
&=\max_{x_0-\alpha_l(n)<y\leq x_0}\text{Av}_n\left([y,~z]\right)~~~(\text{as }x_0-\beta_l(n)\in\left(x_0-\alpha_l(n),~x_0\right]).
\end{align*}
But, as the maximum over a restricted set is no larger than that on the entire set, we have
\[\max_{y\leq x_0} \text{Av}_n\left([y,~z]\right)\geq \max_{x_0-\alpha_l(n)<y\leq x_0}\text{Av}_n\left([y,~z]\right).\]
Combining the above two results, for any $z\geq x_0$, we have
\begin{equation}
\max_{y\leq x_0} \text{Av}_n\left([y,~z]\right)= \max_{x_0-\alpha_l(n)<y\leq x_0}\text{Av}_n\left([y,~z]\right).\label{eq1}
\end{equation}
Similarly, the second event on the RHS can be dealt with as follows:
\begin{align*}
&\max_{y\leq x_0}\text{Av}_n\left(\left(y,~x_0+\beta_u(n)\right]\right)\leq \min_{z\geq x_0+\alpha_u(n)}\text{Av}_n\left([x_0+\beta_u(n),~z]\right)\\
\implies &\text{Av}_n\left(\left(y,~x_0+\beta_u(n)\right]\right)\leq \text{Av}_n\left([x_0+\beta_u(n),~z]\right)~~\forall~z\geq x_0+\alpha_u(n),~y\leq x_0\\
\implies &\text{Av}_n\left(\left(y,~x_0+\beta_u(n)\right]\right)\leq \min_{z\geq x_0+\alpha_u(n)}\text{Av}_n\left([y,~z]\right)~~\forall~y\leq x_0.
\end{align*}
Thus, for any $y\leq x_0$,
\begin{align*}
\min_{z\geq x_0} \text{Av}_n\left([y,~z]\right)&=\min\left\{\min_{z\geq x_0+\alpha_u(n)} \text{Av}_n\left([y,~z]\right),~\min_{x_0+\alpha_u(n)>z\geq x_0}\text{Av}_n\left([y,~z]\right)\right\}\\
&\geq\min\left\{\text{Av}_n\left(\left(y,~x_0+\beta_u(n)\right]\right),~\min_{x_0+\alpha_u(n)>z\geq x_0}\text{Av}_n^{'}\left([y,~z]\right)\right\}\\
&=\min_{x_0+\alpha_u(n)>z\geq x_0}\text{Av}_n\left([y,~z]\right)~~~(\text{as }x_0+\beta_u(n)\in\left(x_0, ~x_0+\alpha_u(n)\right]).
\end{align*}
But, as the minimum over a restricted set is no less than that on the entire set, for any $y\leq x_0$, we have
\begin{equation}
\min_{z\geq x_0} \text{Av}_n\left([y,~z]\right)=\min_{x_0+\alpha_u(n)>z\geq x_0}\text{Av}_n\left([y,~z]\right)\label{eq2}
\end{equation}
From \eqref{eq1} and \eqref{eq2}, we have:
\begin{align*}
\hat{f}_n(x_0)&=\max_{i:X_{(i)}\leq x_0}\min_{j:x_0\leq X_{(j)}}\text{Av}_n\left([X_{(i)},~X_{(j)}]\right)\\
&=\max_{i:X_{(i)}\leq x_0}\left[\min_{j:x_0\leq X_{(j)}<x_0+\alpha_u(n)}\text{Av}_n\left([X_{(i)},~X_{(j)}]\right)\right]\\
&=\min_{j:x_0\leq X_{(j)}<x_0+\alpha_u(n)}\left[\max_{i:X_{(i)}\leq x_0}\text{Av}_n\left([X_{(i)},~X_{(j)}]\right)\right]\\
&=\min_{j:x_0\leq X_{(j)}<x_0+\alpha_u(n)}\left[\max_{i:x_0-\alpha_l(n)<X_{(i)}\leq x_0}\text{Av}_n\left([X_{(i)},~X_{(j)}]\right)\right]=f_{n,c}^{*}(x_0)
\end{align*}
Thus,
\begin{align*}
\{\hat{f}(x_0)\neq f_{n,c}^{*}(x_0)\}\subseteq &\left\{\min_{z\geq x_0}\text{Av}_n\left(\left(x_0-\beta_l(n),~z\right]\right)< \max_{y\leq x_0-\alpha_l(n)}\text{Av}_n\left([y,~x_0-\beta_l(n)]\right)\right\}\cup\\
&\left\{\max_{y\leq x_0}\text{Av}_n\left(\left(y,~x_0+\beta_u(n)\right]\right)> \min_{z\geq x_0+\alpha_u(n)}\text{Av}_n\left([x_0+\beta_u(n),~z]\right)\right\}
\end{align*}
Now, observe that
\begin{align*}
    &\left\{\min_{z\geq x_0}\text{Av}_n\left(\left(x_0-\beta_l(n),~z\right]\right)< \max_{y\leq x_0-\alpha_l(n)}\text{Av}_n\left([y,~x_0-\beta_l(n)]\right)\right\}\\&\hspace{2cm}\subseteq
    \left\{\min_{z\geq x_0}\text{Av}_n\left(\left(x_0-\beta_l(n),~z\right]\right)< f_{0,n}(x_0-\beta_l(n))\right\}\\
    &\hspace{4cm}\cup\left\{\max_{y\leq x_0-\alpha_l(n)}\text{Av}_n\left([y,~x_0-\beta_l(n)]\right)> f_{0,n}(x_0-\beta_l(n))\right\},
\end{align*}
and
\begin{align*}
    &\left\{\max_{y\leq x_0}\text{Av}_n\left(\left(y,~x_0+\beta_u(n)\right]\right)> \min_{z\geq x_0+\alpha_u(n)}\text{Av}_n\left([x_0+\beta_u(n),~z]\right)\right\}\\
    &\hspace{2cm}\subseteq
\left\{\max_{y\leq x_0}\text{Av}_n\left(\left(y,~x_0+\beta_u(n)\right]\right)> f_{0,n}(x_0+\beta_u(n)\right\}\\
&\hspace{4cm}\quad\cup\left\{\min_{z\geq x_0+\alpha_u(n)}\text{Av}_n^{'}\left([x_0+\beta_u(n),~z]\right)<f_{0,n}(x_0+\beta_u(n)\right\}.
\end{align*}
This completes the proof of the lemma.\\\\
\end{proof}
\begin{lemma}\label{lem:prob-of-unequalness-goes-to-zero}
Under assumptions~\ref{assump:data-model-assumption},~\ref{assump:continuous-distribution-of-covariates}, and~\ref{assump:continuity-of-f_{0,n}}, for all $c > 0$, we have
\begin{align*}
\limsup_{n\to\infty} \mathbb{P}(f_{n,c}^{*}(x_0) \neq \hat{f}_n(x_0))
&\le \frac{4\overline{\sigma}^2}{c}\Big[\frac{1}{\Gamma_1(c/h_0)} + \frac{1}{\Gamma_1(-c/h_0)}\\
&\quad+ \frac{1}{\Gamma_2(c/h_0)} + \frac{1}{\Gamma_2(-c/h_0)}\Big].
\end{align*}
Under~\ref{assump:continuity-of-f_{0,n}},
\[
\limsup_{c\to\infty}\limsup_{n\to\infty} \mathbb{P}(f_{n,c}^{*}(x_0) \neq \hat{f}_n(x_0)) = 0.
\]
\end{lemma}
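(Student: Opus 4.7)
The plan is to apply Lemma~\ref{lem:unequalness-of-fn-star-and-fhat} and bound each of the four events on its right-hand side by the same recipe: decompose the relevant average $\mathrm{Av}_n$ as (monotone signal) $+$ (noise), where monotonicity of $f_{0,n}$ forces the signal to have a definite sign, so each event requires the noise to beat the signal. Conditional on the covariates, the noise terms are partial sums of conditionally independent mean-zero errors by Lemma~1 of~\cite{bhattacharya1974convergence}, and the Hajek--Renyi maximal inequality of Lemma~\ref{lem:Hajek-Renyi-special-case} controls the tail.

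For the first event $E_1 := \{\min_{z \ge x_0} \mathrm{Av}_n((x_0 - \beta_l, z]) < f_{0,n}(x_0 - \beta_l)\}$, I would order the covariates in $(x_0 - \beta_l, \infty)$ as $X'_{(1)} < X'_{(2)} < \cdots$ with corresponding concomitant errors $\xi'_{[k]}$ and rewrite $E_1$ as $\bigcup_{j \ge \delta_{l,-}(n)}\{\sum_{k=1}^{j}(f_{0,n}(X'_{(k)}) - f_{0,n}(x_0 - \beta_l)) + \sum_{k=1}^{j}\xi'_{[k]} < 0\}$. Because $X'_{(k)} > x_0 - \beta_l$ and $v_n \le f_{0,n}(x_0) - f_{0,n}(x_0 - \beta_l)$, monotonicity yields the uniform bound $\tfrac{1}{j}\sum_{k=1}^{j}(f_{0,n}(X'_{(k)}) - f_{0,n}(x_0 - \beta_l)) \ge v_n$ for all $j \ge \delta_{l,-}(n)$. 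Applying Lemma~\ref{lem:Hajek-Renyi-special-case} conditionally on $\mathcal{X}$ with $m = \delta_{l,-}(n)$ and $t = v_n$ gives $\mathbb{P}(E_1 \mid \mathcal{X}) \le 4\overline{\sigma}^2/(v_n^2\, \delta_{l,-}(n))$. Lemmas~\ref{lem:smoothness-of-H} and~\ref{lem:control-of-v_n} supply $s_n\,\delta_{l,-}(n)/n \to c$ and $\sqrt{n/s_n}\,v_n \to \Gamma_1(-c/h_0)$, so after a dominated convergence argument the contribution of $E_1$ matches the $\Gamma_1(-c/h_0)$ term in the stated bound. The event $E_3$ is handled by the mirror argument and yields the $\Gamma_1(c/h_0)$ term.

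For $E_2 := \{\max_{y \le x_0 - \alpha_l} \mathrm{Av}_n([y, x_0 - \beta_l]) > f_{0,n}(x_0 - \beta_l)\}$, the same decomposition applies, but the averaging window now lies in the rescaled band $t \in [-2c/h_0, -c/h_0]$ since $s_n\alpha_l \to 2c/h_0$ and $s_n\beta_l \to c/h_0$ by Lemma~\ref{lem:smoothness-of-H}. A calculation parallel to Lemma~\ref{lem:control-of-v_n}, using Part~3 of Proposition~\ref{prop:assumptions-implications}, shows that the normalized ``signal deficit'' $\sqrt{n/s_n}$ times the average of $(f_{0,n}(x_0 - \beta_l) - f_{0,n}(X_k))$ over this window converges to $\Gamma_2(-c/h_0)$; the Hajek--Renyi step then produces the $\Gamma_2(-c/h_0)$ term, and the symmetric event $E_4$ contributes the $\Gamma_2(c/h_0)$ term. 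Summing the four bounds yields the first claim.

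For the second claim, I would invoke Part~4 of Proposition~\ref{prop:assumptions-implications}, which (together with the hypothesis $\limsup_{|c|\to\infty}|c(\psi(c) - \psi(3c/2))| = \infty$ of~\ref{assump:continuity-of-f_{0,n}}, as explained in Remark~\ref{rem:relaxation-of-assumptions}) ensures $|c|\,\Gamma_j(\pm c/h_0) \to \infty$ as $c \to \infty$, so every term in the bracket, multiplied by $1/c$, tends to $0$. I expect the principal obstacle to be matching the Hajek--Renyi threshold tightly to the $\Gamma_j$ scale: the naive uniform lower bound $v_n$ for all $j \ge \delta_{l,-}(n)$ produces a $1/\Gamma_j^2$ factor rather than $1/\Gamma_j$, so a dyadic peeling of the range of $j$ into successive scales $[2^i\delta_{l,-}(n), 2^{i+1}\delta_{l,-}(n)]$, paired with progressively sharper lower bounds on the signal coming from Part~3 of Proposition~\ref{prop:assumptions-implications}, is likely required. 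Additional care is needed to propagate the in-probability control of $\delta_{l,\pm}(n)$ and $v_n$ around their limits from the conditional bound to the unconditional one.
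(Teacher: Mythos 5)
Your route is the paper's route, essentially step for step: the reduction via Lemma~\ref{lem:unequalness-of-fn-star-and-fhat} to four events; the signal-plus-noise decomposition in which monotonicity of $f_{0,n}$ gives the uniform lower bound $v_n$ on the signal average for all $j\ge\delta_{l,-}(n)$; the conditional application of Lemma~\ref{lem:Hajek-Renyi-special-case} with starting index $\delta_{l,-}(n)$ and threshold $v_n$; the limits $s_n\delta_{l,-}(n)/n\to c$ and $\sqrt{n/s_n}\,v_n\to\Gamma_1(-c/h_0)$ from Lemmas~\ref{lem:smoothness-of-H} and~\ref{lem:control-of-v_n}, together with the analogues $v_{2,n},v_{3,n},v_{4,n}$ over exactly the rescaled windows you describe for the other three events; passage from the conditional to the unconditional bound by applying bounded convergence to $\min\{4\overline{\sigma}^2/(v_n^2\delta_{l,-}(n)),1\}$; and Part~4 of Proposition~\ref{prop:assumptions-implications} for the second claim.

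The concern in your final paragraph deserves emphasis, because it is not a defect of your argument relative to the paper's --- it is present in the paper's own proof. The conditional bound is $4\overline{\sigma}^2/(v_n^2\delta_{l,-}(n))$, and since $\sqrt{n/s_n}\,v_n\to\Gamma_1(-c/h_0)$ enters squared while $s_n\delta_{l,-}(n)/n\to c$ enters linearly, its limit in probability is $4\overline{\sigma}^2/(c\,\Gamma_1(-c/h_0)^2)$ rather than $4\overline{\sigma}^2/(c\,\Gamma_1(-c/h_0))$; the paper's displayed limit, and hence the constant in the lemma statement, carries only a single power of $\Gamma_1$, so a square appears to have been dropped. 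The paper does not carry out the dyadic refinement you anticipate; it simply asserts the single-power limit. Your ``naive'' computation is therefore exactly what the paper's argument actually delivers, and the honest form of the first display has $\Gamma_j^2$ in place of $\Gamma_j$. This matters for the second claim only in that one then needs $c\,\Gamma_j(\pm c/h_0)^2\to\infty$ rather than $c\,\Gamma_j(\pm c/h_0)\to\infty$, while Part~4 of Proposition~\ref{prop:assumptions-implications} is stated for the latter; either the peeling you sketch (sharper signal lower bounds on dyadic blocks of $j$) or a separate argument that the $\Gamma_j$ stay bounded away from zero would be needed to close this rigorously. In short, your proposal reproduces the paper's proof, and the one difficulty you identify is real and is glossed over rather than resolved there.
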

\begin{proof}[Proof of Lemma~\ref{lem:prob-of-unequalness-goes-to-zero}]
Consider the bounds obtained in Lemma~\ref{lem:unequalness-of-fn-star-and-fhat}. Showing that the probability of each of these bounds goes to zero will suffice. We will only show that the first of these probabilities goes to zero and the remaining ones can be completed by a similar argument.
For notational convenience, define
\begin{align*}
\text{Av}_n^{*}([y,~z])&:=\frac{\sum_{k:y\leq X_{k:n}\leq z}(Y_{[k:n]}-f_{0,n}(X_{
(
k)}))}{|\{k:y\leq X_k\leq z\}|}\\
&= \frac{\sum_{k:y\leq X_{k:n}\leq z}\xi_{[k:n]}}{|\{k:y\leq X_k\leq z\}|},\quad\mbox{and}\\
\text{Av}_n^{f_{0,n}}([y,~z])&:=\frac{\sum_{k:y\leq X_{k:n}\leq z}f_{0,n}(X_{k:n})}{|\{k:y\leq X_{k:n}\leq z\}|}.
\end{align*}
Consider the first event.
\begin{align*}
\mathcal{E}_{1n} &= \left\{\min_{z\geq x_0}\text{Av}_n\left(\left(x_0-\beta_l(n),~z\right]\right)< f_{0,n}(x_0-\beta_l(n))\right\}\\
&=\left\{\min_{z\geq x_0}\left[\text{Av}_n^{*}\left(\left(x_0-\beta_l(n),~z\right]\right)+\text{Av}_n^{f_{0,n}}\left(\left(x_0-\beta_l(n),~z\right]\right)\right]< f_{0,n}(x_0-\beta_l(n))\right\}\\
&\subseteq \left\{\min_{z\geq x_0}\left[\text{Av}_n^{*}\left(\left(x_0-\beta_l(n),~z\right]\right)\right]+\min_{z\geq x_0}\left[\text{Av}_n^{f_{0,n}}\left(\left(x_0-\beta_l(n),~z\right]\right)\right]< f_{0,n}(x_0-\beta_l(n))\right\}\\
&\overset{(a)}{=}\left\{\min_{z\geq x_0}\left[\text{Av}_n^{*}\left(\left(x_0-\beta_l(n),~z\right]\right)\right]+\text{Av}_n^{f_{0,n}}\left(\left(x_0-\beta_l(n),~x_0\right]\right)< f_{0,n}(x_0-\beta_l(n))\right\}\\
&=\left\{\max_{z\geq x_0}\left[-\text{Av}_n^{*}\left(\left(x_0-\beta_l(n),~z\right]\right)\right]> -f_{0,n}(x_0-\beta_l(n))+\text{Av}_n^{f_{0,n}}\left(\left(x_0-\beta_l(n),~x_0\right]\right)\right\}\\
&=\left\{\max_{z\geq x_0}\left[\text{Av}_n^{**}\left(\left(x_0-\beta_l(n),~z\right]\right)\right]> v_n\right\},
\end{align*}
where equality~(a) follows from~\ref{assump:data-model-assumption}.
Here \[\text{Av}_n^{**}\left(\left(x_0-\beta_l(n),~z\right]\right):=-\text{Av}_n^{*}\left(\left(x_0-\beta_l(n),~z\right]\right)\]
and
\begin{align*}v_n&=-f_{0,n}(x_0-\beta_l(n))+\text{Av}_n^{f_{0,n}}\left(\left(x_0-\beta_l(n),~x_0\right]\right)\\
&=\frac{\sum_{k:x_0-\beta_l(n)\leq X_{k:n}\leq x_0}(f_{0,n}(X_{k:n})-f_{0,n}(x_0-\beta_l(n)))}{|\{k:x_0-\beta_l(n)\leq X_{k:n}\leq x_0\}|}.
\end{align*}
Note that $v_n$ is random only through $\mathcal{X} = \{X_1, \ldots, X_n\}$. We control the probability of $\mathcal{E}_{1n}$ by first conditioning on $\mathcal{X}$. In the definition of $\text{Av}_n^{**}(\left(x_0-\beta_l(n),~z\right]$, we only consider covariate observations which are greater than $x_0-\beta_l(n)$. This implies that we can write
\[
\text{Av}_n^{**}((x_0 - \beta_l(n), z]) = \max_{\delta_{l,-}(n) \le j\le n}\frac{1}{j}\sum_{i=k}^{k+j} (-\xi_{[i:n]}),
\]
where $k = \min\{1\le m\le n:\, X_{k:n} > x_0-\beta_l(n)\}$.
Then Lemma~\ref{lem:Hajek-Renyi-special-case} and assumption~\ref{assump:data-model-assumption} implies that
\begin{equation}\label{eq:Markov-inequality}
\begin{split}
   \mathbb{P}(\mathcal{E}_{1n}|\mathcal{X}) &=\mathbb{P}\left(\max_{z\geq x_0} \text{Av}_n^{**}(\left(x_0-\beta_l(n),~z\right])>v_n~|~\mathcal{X}\right)\\
    &\le \mathbb{P}\left(\max_{\delta_{l,-}(n) \le j\le n}\frac{1}{j}\sum_{i=k}^{k+j} (-\xi_{[i:n]})>v_n|~\mathcal{X}\right)\\
    &\le \min\left\{\frac{4\overline{\sigma}^2}{v_n^2\delta_{l,-}(n)}, 1\right\}.
\end{split}
\end{equation}
Therefore,
\[
\mathbb{P}(\mathcal{E}_{1n}) \le \mathbb{E}\left[\min\left\{\frac{4\overline{\sigma}^2}{v_n^2\delta_{l,-}(n)}, 1\right\}\right].
\]
Lemma~\ref{lem:smoothness-of-H} implies that $s_n\delta_{l,-}(n)/n \overset{p}{\to} c$ as $n\to\infty$. In Lemma~\ref{lem:control-of-v_n}, we show that
\begin{equation}\label{eq:limit-of-v_n}
\sqrt{\frac{n}{s_n}}v_n ~\overset{p}{\to}~ \left[-\psi(-c/h_0) - (h_0/c)\Psi(-c/h_0)\right]\quad\mbox{as}\quad n\to\infty.
\end{equation}
Combining these two, we get that
\begin{align*}
\frac{4\overline{\sigma}^2}{v_n^2\delta_{l,-}(n)}
&\overset{p}{\to} \frac{4\overline{\sigma}^2}{c[-\psi(-c/h_0) - (h_0/c)\Psi(-c/h_0)]}
\quad\mbox{as}\quad n\to\infty.
\end{align*}
Because $x\mapsto \min\{x, 1\}$ is a continuous bounded function, we conclude that for all $c > 0$,
\begin{align*}
\limsup_{n\to\infty}\,\mathbb{P}(\mathcal{E}_{1n}) &\le \min\left\{\frac{4\overline{\sigma}^2}{c[-\psi(-c/h_0) - (h_0/c)\Psi(-c/h_0)]},\,1\right\}\\
&\le \frac{4\overline{\sigma}^2}{c[-\psi(-c/h_0) - (h_0/c)\Psi(-c/h_0)]}.
\end{align*}
In dealing with the remaining three events from Lemma~\ref{lem:unequalness-of-fn-star-and-fhat}, we encounter the following analogues of $v_n$:
\begin{align*}
v_{2,n} &:= f_{0,n}(x_0 - \beta_l(n)) - \max_{y \le x_0 - \alpha_l(n)}\text{Av}_n([y, x_0 - \beta_l(n)])\\
&= f_{0,n}(x_0 - \beta_l(n)) - \text{Av}_n([x_0 - \alpha_l(n), x_0 - \beta_l(n)]),\\
v_{3,n} &:= f_{0,n}(x_0 + \beta_u(n)) - \text{Av}_n((x_0, x_0 + \beta_u(n)])\\
v_{4,n} &:= \text{Av}_n([x_0 + \beta_u(n), x_0 + \alpha_u(n)]) - f_{0,n}(x_0 + \beta_u(n)).
\end{align*}
Following the same proof technique as in Lemma~\ref{lem:control-of-v_n}, it can be proved that as $n\to\infty$,
\begin{align*}
\sqrt{\frac{n}{s_n}}v_{2,n} &\overset{p}{\to} [\psi(-c/h_0) + (h_0/c)(\Psi(-2c/h_0) - \Psi(-c/h_0))],\\
\sqrt{\frac{n}{s_n}}v_{3,n} &\overset{p}{\to} [\psi(c/h_0) - (h_0/c)\Psi(c/h_0)],\\
\sqrt{\frac{n}{s_n}}v_{4,n} &\overset{p}{\to} [-\psi(c/h_0) + (h_0/c)(\Psi(2c/h_0) - \Psi(c/h_0))].
\end{align*}
Therefore,
\begin{align*}
\limsup_{n\to\infty} \mathbb{P}(f_{n,c}^{*}(x_0) \neq \hat{f}_n(x_0))
&\le \frac{4\overline{\sigma}^2}{c}\Big[\frac{1}{\Gamma_1(c/h_0)} + \frac{1}{\Gamma_1(-c/h_0)}\\
&\quad+ \frac{1}{\Gamma_2(c/h_0)} + \frac{1}{\Gamma_2(-c/h_0)}\Big].
\end{align*}
\end{proof}
\begin{lemma}\label{lem:brownian-motion}
Consider the process $U_n(\cdot)$ defined in~\eqref{eq:linear-interpolation-scaled}.
Under assumption~\ref{assump:data-model-assumption}, as $n\to\infty$,
\[
\left(\sqrt{\frac{2n/s_n}{D_0\sigma_0^2}}(U_n(t) - \mathbb{E}[U_n(t)|\mathcal{X}])\right)_{t\in[0,2cD_0]}\overset{d}{\to} (B(t))_{t\in[0, 2cD_0]},\]
where $B(t)$ denotes the standard Brownian motion on $\mathbb{R}$.
\end{lemma}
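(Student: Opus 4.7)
The strategy is to condition on $\mathcal{X} = \{X_1, \ldots, X_n\}$, identify $S_n(t) := U_n(t) - \mathbb{E}[U_n(t)|\mathcal{X}]$ as a rescaled linear interpolation of partial sums of conditionally independent mean-zero random variables, and then apply a functional invariance principle. Since $\mathbb{E}[\xi_i|X_i] = 0$, $\mathbb{E}[Y_{[l:\gamma_n]}'|\mathcal{X}] = f_{0,n}(X_{(l)}')$, so
\[
S_n(t) = \frac{2cD_0}{\gamma_n}\left(\sum_{l=1}^{N_n(t)}\xi_{[l:\gamma_n]}' + r_n(t)\,\xi_{[N_n(t)+1:\gamma_n]}'\right),
\]
with $N_n(t) = \lfloor\gamma_nt/(2cD_0)\rfloor$ and $r_n(t)\in[0,1)$. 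By Lemma~1 of~\cite{bhattacharya1974convergence}, the concomitants $\xi_{[l:\gamma_n]}'$ are, conditional on $\mathcal{X}$, mutually independent with mean zero and $\mathrm{Var}(\xi_{[l:\gamma_n]}'|\mathcal{X}) = \sigma_n^2(X_{(l)}')$.

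Next I verify that the rescaled process has the correct covariance function. By Lemma~\ref{lem:smoothness-of-H} and the fact that $\mathbb{P}(X\in(x_0-\alpha_l,x_0+\alpha_u))=4c/s_n$, one has $\gamma_n s_n/(4cn)\overset{p}{\to} 1$ and all $X_{(l)}'$ are eventually contained in a shrinking neighborhood of $x_0$. Combined with the local uniform convergence of $\sigma_n^2(\cdot)$ from~\ref{assump:data-model-assumption}, this yields $\max_{1\le l\le\gamma_n}|\sigma_n^2(X_{(l)}')-\sigma_0^2|\overset{p}{\to}0$. Therefore, at the knots $t_{nk}=2cD_0k/\gamma_n$,
\[
\frac{2n/s_n}{D_0\sigma_0^2}\,\mathrm{Var}(S_n(t_{nk})|\mathcal{X}) = \frac{2n/s_n}{D_0\sigma_0^2}\cdot\frac{(2cD_0)^2}{\gamma_n^2}\sum_{l=1}^{k}\sigma_n^2(X_{(l)}') \overset{p}{\to} t_{nk},
\]
matching the target Brownian covariance. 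An analogous computation for increments $S_n(t)-S_n(s)$ gives conditional variance $(t-s)(1+o_p(1))$ uniformly in $0\le s<t\le 2cD_0$.

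To obtain weak convergence on $C[0,2cD_0]$, I apply a conditional Donsker--Lindeberg invariance principle (e.g., the triangular-array version of Theorem 14.1 in Billingsley). Finite-dimensional convergence reduces, via the Cram\'er--Wold device, to the Lindeberg--Feller CLT for the terms $Z_{n,l}:=(2cD_0/\gamma_n)\xi_{[l:\gamma_n]}'$. The Lindeberg sum equals
\[
\frac{(2cD_0)^2}{\gamma_n^2}\sum_{l=1}^{\gamma_n}\mathbb{E}\!\left[(\xi_{[l:\gamma_n]}')^2\mathbf{1}\{|\xi_{[l:\gamma_n]}'|>\epsilon\gamma_n/(2cD_0)\}\,\big|\,\mathcal{X}\right],
\]
and the truncation threshold diverges because $\gamma_n\to\infty$. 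Tightness follows from $\mathbb{E}[(S_n(t)-S_n(s))^2|\mathcal{X}]\le C(t-s+\gamma_n^{-1})$ together with the fact that $S_n$ is the linear interpolation of a conditionally independent partial-sum, a standard chaining or Doob-type maximal inequality argument. Since the Brownian limit has a law independent of $\mathcal{X}$, conditional weak convergence in probability on $\mathcal{X}$ yields unconditional weak convergence by dominated convergence applied to bounded continuous test functionals.

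The main obstacle is the Lindeberg check: assumption~\ref{assump:data-model-assumption} provides only a uniform bound $\sigma_n^2(x)\le\overline{\sigma}^2$, not uniform integrability of the squared errors. I would overcome this by exploiting that the conditional laws $\{G_x:x\in\mathcal{S}(x_0)\}$ have uniformly bounded variances that converge to $\sigma_0^2$ at $x=x_0$, so the family $\{u^2\,dG_x(u):|x-x_0|\le\delta\}$ is uniformly integrable for all small enough $\delta$; since the $X_{(l)}'$ all fall into such a $\delta$-neighborhood with probability tending to one, the truncated-second-moment supremum vanishes as the threshold grows, giving Lindeberg in probability and hence the desired FCLT.
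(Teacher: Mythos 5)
Your proof follows essentially the same route as the paper's: condition on $\mathcal{X}$, use the conditional independence and mean-zero property of the concomitants to compute the conditional variance of the interpolated partial-sum process, show that after the stated scaling it converges to $t$ (using $s_n\gamma_n/(4cn)\overset{p}{\to}1$ and the local uniform convergence of $\sigma_n^2(\cdot)$ to $\sigma_0^2$), invoke a Donsker-type invariance principle conditionally on $\mathcal{X}$, and decondition because the limit law does not depend on $\mathcal{X}$. You are in fact somewhat more explicit than the paper about the Lindeberg condition and tightness, which the paper compresses into ``one can now follow the proof of Donsker's invariance theorem''; your observation that the uniform bound $\sigma_n^2(\cdot)\le\overline{\sigma}^2$ alone does not deliver uniform integrability of the squared errors is a genuine subtlety that the paper's own argument also leaves unaddressed, so your sketch of a fix is a welcome addition rather than a deviation.
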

\begin{proof}
Setting $\xi_{[l:\gamma_n]}' := Y_{[l:\gamma_n]}'-f_{0,n}(X_{(l)}')$, from the definition of $U_n(\cdot)$, it follows that
\[
U_{n}(t)-\mathbb{E}[U_n(t)|\mathcal{X}]=2cD_0\left(\frac{1}{\gamma_n}\sum_{l=1}^{\lfloor \gamma_nt/(2cD_0)\rfloor} \xi_{[l:\gamma_n]}' + \frac{\gamma_nt/(2cD_0)-\lfloor \gamma_nt/(2cD_0)\rfloor}{\gamma_n}\xi_{[\lfloor \gamma_nt/(2cD_0)\rfloor+1:\gamma_n]}'\right).
\]
The right-hand side is a scaled average of independent random variables with mean zero conditional on $X_1, \ldots, X_n.$ Hence, we get
\begin{align*}
\mbox{Var}\left(U_n(t)|\mathcal{X}\right) &= (2cD_0)^2\left(\frac{1}{\gamma_n^2}\sum_{l=1}^{\lfloor \gamma_nt/(2cD_0)\rfloor} \sigma_n^2(X_{(l)}')\right)\\
&\quad+ (2cD_0)^2\left(\frac{\gamma_nt/(2cD_0)-\lfloor \gamma_nt/(2cD_0)\rfloor}{\gamma_n}\right)^2\sigma_n^2(X_{[\lfloor \gamma_nt/(2cD_0)\rfloor +1:\gamma_n]}).
\end{align*}
This implies that for any $t\in[0, 2cD_0]$,
\[
\left|\mbox{Var}\left(\frac{\gamma_n^{1/2}U_n(t)}{(2cD_0)^{1/2}}\big|\mathcal{X}\right) - \frac{2cD_0}{\gamma_n}\sum_{l=1}^{\lfloor \gamma_nt/(2cD_0)\rfloor} \sigma_n^2(X_{(l)}')\right| \le \frac{\overline{\sigma}^2}{\gamma_n}\to 0,\quad\mbox{as}\quad n\to\infty.
\]
Because $\alpha_l, \alpha_u$ both converge to zero as $n\to\infty$ (from Lemma~\ref{lem:smoothness-of-H}), we conclude that from assumption~\ref{assump:data-model-assumption},
\[
\sup_{1\le l\le \gamma_n}|\sigma_n^2(X_{(l)}') - \sigma^2_0| \le \sup_{-\alpha_l \le x - x_0 \le \alpha_u}|\sigma_n^2(x) - \sigma_n^2(x_0)| + |\sigma_n^2(x_0) - \sigma_0^2| \to 0.
\]
Combining this with the variance expression, we obtain that
\[
\left|\mbox{Var}\left(\frac{\gamma_n^{1/2}U_n(t)}{(2cD_0)^{1/2}}\big|\mathcal{X}\right) - \frac{2cD_0\lfloor \gamma_nt/(2cD_0)\rfloor}{\gamma_n}\sigma_0^2\right| \to 0,\quad\mbox{as}\quad n\to\infty.
\]
Because $\lfloor \gamma_nt/(2cD_0)\rfloor/\gamma_n\to t/(2cD_0)$ as $n\to\infty$, we conclude that $$\mbox{Var}(\gamma_n^{1/2}U_n(t)/(2cD_0\sigma_0^2)^{1/2}|\mathcal{X}) \to t,\quad\mbox{as}\quad n\to\infty.$$ One can now follow the proof of Donsker's invariance theorem to claim that
\[
\left(\frac{\gamma_n^{1/2}}{(2cD_0\sigma_0^2)^{1/2}}(U_n(t) - \mathbb{E}[U_n(t)|\mathcal{X}])\right)_{t\in[0, 2cD_0]} ~\overset{d}{\to}~ (B(t))_{t\in[0,2cD_0]},
\]
conditional on $\mathcal{X}$, where $(B(t))_{t\in[0,2cD_0]}$ is the standard Brownian motion on $[0, 2cD_0]$. Because the limiting process does not depend on $\mathcal{X}$, and because $\gamma_n\sim\mbox{Binomial}(n, 4c/s_n)$ (which implies $s_n\gamma_n/(4cn) \overset{p}{\to}1$), we get
\[
\left(\sqrt{\frac{2n/s_n}{D_0\sigma_0^2}}(U_n(t) - \mathbb{E}[U_n(t)|\mathcal{X}])\right)_{t\in[0,2cD_0]} \overset{d}{\to} (B(t))_{t\in[0, 2cD_0]}.
\]
This completes the proof of Lemma~\ref{lem:brownian-motion}.
\end{proof}
\begin{lemma}\label{lem:deterministic-part-final}
Under assumptions~\ref{assump:data-model-assumption}--\ref{assump:continuity-of-f_{0,n}}, for any $c > 0$, as $n\to\infty$,
\[
\sup_{t\in[0, 2cD_0]}\left|\sqrt{\frac{n}{s_n}}(\mathbb{E}[U_n(t)|\mathcal{X}] - f_{0,n}(x_0)t) - \frac{D_0h_0}{2}\left[-\Psi\left(-\frac{2c}{h_0}\right) + \Psi\left(\frac{1}{h_0}\left(\frac{2t}{D_0} - 2c\right)\right)\right]\right| \overset{p}{\to} 0.
\]
\end{lemma}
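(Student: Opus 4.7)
The plan is to expand $\mathbb{E}[U_n(t)\mid\mathcal{X}]$ as a sum over the order statistics $X_{(l)}'$ in $(x_0-\alpha_l,x_0+\alpha_u]$, swap the resulting empirical measure for the conditional distribution $\tilde H_n$ (which has density $(s_n/(4c))h_n$ on the interval), change variables $x=x_0+y/s_n$, and identify the limit via assumption~\ref{assump:continuity-of-f_{0,n}}. With $k_n(t):=\lfloor\gamma_n t/(2cD_0)\rfloor$, first I would write
\begin{equation*}
\mathbb{E}[U_n(t)\mid\mathcal{X}]-f_{0,n}(x_0)t \;=\; \frac{2cD_0}{\gamma_n}\sum_{l=1}^{k_n(t)}\bigl[f_{0,n}(X_{(l)}')-f_{0,n}(x_0)\bigr]\;+\;\mathrm{Rem}_n(t),
\end{equation*}
where $\mathrm{Rem}_n(t)$ collects the fractional interpolation term together with the discretization error $f_{0,n}(x_0)(2cD_0 k_n(t)/\gamma_n - t)$. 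The bound $\gamma_n\sim 4cn/s_n$ from Lemma~\ref{lem:smoothness-of-H}, combined with $\sup_{|y|\le 2c/h_0}\sqrt{n/s_n}|f_{0,n}(x_0+y/s_n)-f_{0,n}(x_0)|=O(1)$ from~\ref{assump:continuity-of-f_{0,n}} and Proposition~\ref{prop:assumptions-implications}, makes $\sqrt{n/s_n}\,\mathrm{Rem}_n(t)$ uniformly $O(s_n/n)=o_p(1)$.

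Let $\hat F_{X',\gamma_n}$ denote the empirical distribution of $X_{(1)}',\ldots,X_{(\gamma_n)}'$; conditional on $\gamma_n$ these are i.i.d.\ from $\tilde H_n$, so a standard empirical-process bound gives $\sup_x|\hat F_{X',\gamma_n}(x)-\tilde H_n(x)|=O_p(\gamma_n^{-1/2})$. Using Abel summation, the sum equals $2cD_0\int_{x_0-\alpha_l}^{\tilde H_n^{-1}(t/(2cD_0))}(f_{0,n}(x)-f_{0,n}(x_0))\,d\tilde H_n(x)$ plus an error bounded by a constant multiple of $\sup_{|x-x_0|\le\alpha_l\vee\alpha_u}|f_{0,n}(x)-f_{0,n}(x_0)|\cdot\gamma_n^{-1/2}$; after multiplying by $\sqrt{n/s_n}$ this is $O_p(\gamma_n^{-1/2})=o_p(1)$, uniformly in $t$. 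The change of variables $x=x_0+y/s_n$ then yields
\begin{equation*}
\sqrt{\tfrac{n}{s_n}}\cdot 2cD_0\!\!\int_{x_0-\alpha_l}^{\tilde H_n^{-1}(t/(2cD_0))}\!\!\!(f_{0,n}(x)-f_{0,n}(x_0))\,d\tilde H_n(x)\;=\;\frac{D_0}{2}\!\!\int_{-s_n\alpha_l}^{L_n(t)}\!\!\sqrt{\tfrac{n}{s_n}}\bigl(f_{0,n}(x_0+\tfrac{y}{s_n})-f_{0,n}(x_0)\bigr)h_n\!\bigl(x_0+\tfrac{y}{s_n}\bigr)\,dy,
\end{equation*}
with $L_n(t):=s_n(\tilde H_n^{-1}(t/(2cD_0))-x_0)$. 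Lemma~\ref{lem:smoothness-of-H} gives $-s_n\alpha_l\to -2c/h_0$, and a one-term Taylor expansion of $H_n$ at $x_0$ yields $L_n(t)\to(2t/D_0-2c)/h_0$, both uniformly in $t\in[0,2cD_0]$.

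By assumption~\ref{assump:continuity-of-f_{0,n}} the integrand converges continuously to $\psi(y)h_0$ on any compact subset of $\mathbb{R}\setminus\{0\}$, and Proposition~\ref{prop:assumptions-implications}(1) provides an integrable envelope; dominated convergence delivers the pointwise limit $\frac{D_0 h_0}{2}\int_{-2c/h_0}^{(2t/D_0-2c)/h_0}\psi(y)\,dy = \frac{D_0 h_0}{2}\bigl[-\Psi(-2c/h_0)+\Psi\bigl(\tfrac{1}{h_0}(\tfrac{2t}{D_0}-2c)\bigr)\bigr]$ for each $t$. To upgrade to uniformity in $t$, I would split the sum at $l_n^{\ast}:=\max\{l:X_{(l)}'\le x_0\}$ so that each of the two resulting partial sums is a monotone function of $t$; pointwise convergence of monotone functions to a continuous limit on the compact interval $[0,2cD_0]$ then forces uniform convergence by a Polya-type argument. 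The main obstacle is the empirical-process step: the integrand is of size $\sqrt{s_n/n}$ while the empirical measure deviates from the true one by $O_p(\gamma_n^{-1/2})$, and the delicate balance $\sqrt{n/s_n}\cdot\sqrt{s_n/n}\cdot\gamma_n^{-1/2}=\gamma_n^{-1/2}\to 0$ is exactly what lets the empirical sum be replaced by the target integral uniformly.
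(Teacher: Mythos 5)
Your argument is correct and reaches the stated limit, and its skeleton (order-statistic sum $\to$ integral against the covariate law near $x_0$ $\to$ change of variables $y=s_n(x-x_0)$ $\to$ identification of $\Psi$ via assumption~\ref{assump:continuity-of-f_{0,n}} $\to$ uniformity from monotonicity) matches the paper's. The technical implementation differs in two places, both legitimately. First, for the empirical-to-population step the paper works unconditionally with $\widehat{H}_n-H_n$ and controls $\sup_\zeta|\mathbb{M}_n(\zeta)-M_n(\zeta)|$ by symmetrization plus a VC-class maximal inequality (Theorem~2.14.1 of van der Vaart--Wellner), whereas you condition on $\gamma_n$, use that the retained points are i.i.d.\ from the conditional law $\tilde H_n$, and combine DKW with integration by parts against the monotone integrand of total variation $O(\sqrt{s_n/n})$; both exploit the same balance $\sqrt{n/s_n}\cdot\sqrt{s_n/n}\cdot(\text{empirical fluctuation})\to 0$, but yours is more elementary. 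Second, you replace the random upper endpoint $X'_{(k_n(t))}$ by the deterministic quantile $\tilde H_n^{-1}(t/(2cD_0))$ and absorb the discrepancy into the DKW bound; the paper instead proves a separate stochastic limit $s_n(X'_{(\lfloor \gamma_n t/(2cD_0)\rfloor)}-x_0)\to (2t/D_0-2c)/h_0$ and, because it evaluates $M_n$ at the random order statistics, must additionally control the increments $M_n(X'_{(k+1)})-M_n(X'_{(k)})$ via the exponential-spacings representation of uniform order statistics --- a step your route avoids entirely. Your handling of the interpolation/discretization remainder is slightly less clean than the paper's (which notes $\sum_l I_l(t)=\gamma_n t/(2cD_0)$ exactly, so no such remainder arises), but your bound $O(1/\gamma_n)$ for it is valid. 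The only point I would tighten is the appeal to ``Proposition~\ref{prop:assumptions-implications}(1)'' for the dominating envelope: what you actually need is that monotonicity of $y\mapsto\sqrt{n/s_n}(f_{0,n}(x_0+y/s_n)-f_{0,n}(x_0))$ bounds the integrand on $[-s_n\alpha_l,L_n(t)]$ by its (convergent, hence bounded) values at the endpoints, together with the local boundedness of $h_n$ from~\ref{assump:continuous-distribution-of-covariates}; this is the content of part~3 of Proposition~\ref{prop:assumptions-implications}, which the paper invokes for exactly this purpose.
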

\begin{proof}
Note that $U_n(t)$ can be alternatively written as
\[
U_n(t) = 2cD_0\frac{1}{\gamma_n}\sum_{l=1}^{\gamma_n} I_l(t)Y'_{[l:\gamma_n]},
\]
where
\[
I_l(t) := \begin{cases}1, &\mbox{if }l \le \lfloor \gamma_nt/(2cD_0)\rfloor,\\
\gamma_nt/(2cD_0) - \lfloor\gamma_nt/(2cD_0)\rfloor, &\mbox{if }l = \lfloor\gamma_n t/(2cD_0)\rfloor + 1,\\
0, &\mbox{if }l > \lfloor\gamma_n t/(2cD_0)\rfloor + 1.\end{cases}
\]
Observe that $\sum_{l=1}^{\gamma_n} I_l(t) = \gamma_nt/(2cD_0)$ and hence
\[
R(t) := \mathbb{E}[U_n(t)|\mathcal{X}] - f_{0,n}(x_0)t = \frac{2cD_0}{\gamma_n}\sum_{l=1}^{\gamma_n}I_l(t)\left[f_{0,n}(X_{(l)}') - f_{0,n}(x_0)\right].
\]
Because $I_l(\cdot), 1\le l\le \gamma_n$ are non-negative and $f_{0,n}(\cdot)$ is non-decreasing, there exists a $T\in[0,2cD_0]$ such that $t\mapsto R(t)$ is non-increasing for $t \le T$ and non-decreasing for $t \ge T$. In fact, $T = (2cD_0/\gamma_n)\min\{l\in\{1,2,\ldots,\gamma_n\}:\,X_{(l)}'\ge x_0\}$. This fact implies that it suffices to study the behavior of $R(\cdot)$ at $t = (2cD_0k/\gamma_n)$ because one can sandwich $R(\cdot)$ at $t\in[2cD_0k/\gamma_n, 2cD_0(k+1)/\gamma_n)$ by $R(\cdot)$ at either endpoint. Fix $1\le k\le \gamma_n$. Observe that
\[
R(2cD_0k/\gamma_n) = \frac{2cD_0}{\gamma_n}\sum_{l=1}^k [f_{0,n}(X_{(l)}') - f_{0,n}(x_0)].
\]
This implies that
\begin{equation}\label{eq:representation-inverse-CDF}
\left(R(2cD_0k/\gamma_n)\right)_{k=1,2,\ldots,\gamma_n} = \frac{2cD_0n}{\gamma_n}\left(\mathbb{M}_n(X_{k:n}')\right)_{k=1,2,\ldots,\gamma_n},
\end{equation}
where
\begin{equation}\label{eq:stochastic-M-n-defined}
\mathbb{M}_n(\zeta) = \int_{x_0 - \alpha_l}^{\zeta} [f_{0,n}(x) - f_{0,n}(x_0)]d\widehat{H}_n(x),\quad\zeta\in(x_0 - \alpha_l, x_0 + \alpha_u).
\end{equation}
Note that the piecewise constant interpolation of $(R(2cD_0k/\gamma_n))_{k=1,2,\ldots,\gamma_n}$ is exactly equal to $(\mathbb{M}_n(\zeta))_{\zeta\in(x_0 - \alpha_l, x_0 + \alpha_u)}$, because $\zeta\mapsto \mathbb{M}_n(\zeta)$ is a constant on $[X_{k:n}', X_{(k+1)}')$ for all $1\le k\le \gamma_n - 1$. Define
\[
M_n(\zeta) = \int_{x_0 - \alpha_l}^{\zeta} [f_{0,n}(x) - f_{0,n}(x_0)]dH_n(x),\quad\zeta\in(x_0 - \alpha_l, x_0 + \alpha_u).
\]
Clearly, $M_n(\zeta) = \mathbb{E}[\mathbb{M}_n(\zeta)]$ for any $\zeta$.
Lemma~\ref{lem:study-of-stochastic-part} implies that as $n\to\infty$,
\begin{equation}\label{eq:stochastic-part-implication}
\sqrt{\frac{n}{s_n}}\sup_{t\in[0, 2cD_0]}\left|R(t) - \frac{2cD_0n}{\gamma_n}M_n\left(X'_{(\lfloor \gamma_n t/(2cD_0)\rfloor)})\right)\right| = o_p(1).
\end{equation}
To see this, note that the limits in~\eqref{eq:lemma-in-deterministic-part} prove that $(R(t))_{t\in[0, 2cD_0]}$ is asymptotically the same as the piecewise constant interpolation of $(R(2cD_0k/\gamma_n))_{k=1,2,\ldots,\gamma_n}$, in the uniform sense, at the rate of $\sqrt{n/s_n}(n/\gamma_n)$. Let us call the piecewise constant interpolation as $(\tilde{R}(t))_{t\in[0, 2cD_0]}$. Formally, we have
\[
\tilde{R}(t) = \frac{2cD_0n}{\gamma_n}\int_{x_0 - \alpha_l}^{X'_{(k)}} [f_{0,n}(x) - f_{0,n}(x_0)]d\widehat{H}_n(x), \quad\mbox{for}\quad t\in[2cD_0k/\gamma_n, 2cD_0(k+1)/\gamma_n).
\]
Now by the first limit of~\eqref{eq:lemma-in-deterministic-part}, we get
\[
\sqrt{\frac{n}{s_n}}\sup_{t\in[0, 2cD_0]}\left|\tilde{R}(t) - \frac{2cD_0n}{\gamma_n}M_n(X'_{(\lfloor \gamma_nt/(2cD_0)\rfloor)})\right| = o_p(1),
\]
which implies~\eqref{eq:stochastic-part-implication}.
To prove the result, it now suffices to study the convergence of $M_n(X'_{(\lfloor \gamma_n t/(2cD_0)\rfloor)})$. The limit statement~\eqref{eq:limit-of-M_n} of Lemma~\ref{lem:study-of-stochastic-part} implies
\[
\sup_{t\in[0, 2cD_0]}\left|\sqrt{ns_n}M_n(X'_{(\lfloor\gamma_n t/(2cD_0\rfloor))}) - h_0\left[-\Psi\left(-\frac{2c}{h_0}\right) + \Psi\left(s_n(X'_{(\lfloor\gamma_n t/(2cD_0\rfloor))} - x_0)\right)\right]\right| = o_p(1).
\]
Moreover, since $s_n\gamma_n/n = 4c(1 + o_p(1))$, this is equivalent to
\[
\sup_{t\in[0, 2cD_0]}\left|\sqrt{\frac{n}{s_n}}\frac{2cD_0n}{\gamma_n}M_n(X'_{(\lfloor\gamma_n t/(2cD_0\rfloor))}) - \frac{D_0h_0}{2}\left[-\Psi\left(-\frac{2c}{h_0}\right) + \Psi\left(s_n(X'_{(\lfloor\gamma_n t/(2cD_0\rfloor))} - x_0)\right)\right]\right| = o_p(1).
\]
Because $\Psi(\cdot)$ is a continuous function, it is uniformly continuous on bounded intervals, and hence, by the final limit of~\eqref{eq:lemma-in-deterministic-part}, we get
\[
\sup_{t\in[0, 2cD_0]}\left|\Psi\left(s_n(X'_{(\lfloor\gamma_n t/(2cD_0\rfloor))} - x_0)\right) - \Psi\left(\frac{1}{h_0}\left(\frac{2t}{D_0} - 2c\right)\right)\right| = o_p(1).
\]
This completes the proof.
\end{proof}
\begin{lemma}\label{lem:study-of-stochastic-part}
Consider the setting in the proof of Lemma~\ref{lem:deterministic-part-final}. Then as $n\to\infty$,
\begin{equation}\label{eq:lemma-in-deterministic-part}
\begin{split}
\sqrt{\frac{n}{s_n}}\frac{n}{\gamma_n}\sup_{\zeta\in(x_0 - \alpha_l, x_0 + \alpha_u)}|\mathbb{M}_n(\zeta) - M_n(\zeta)| ~&\overset{p}{\to}~ 0,\\
\sqrt{\frac{n}{s_n}}\frac{n}{\gamma_n}\sup_{1\le k\le \gamma_n - 1}|M_n(X_{(k+1)}') - M_n(X_{(k)}')| ~&\overset{p}{\to}~ 0,\\
\sqrt{\frac{n}{s_n}}\sup_{t\in[0, 2cD_0]}\left|\frac{1}{h_0}\left(\frac{2t}{D_0} - 2c\right) - s_n\left(X'_{(\lfloor \gamma_nt/(2cD_0)\rfloor)} - x_0\right)\right| ~&=~ O_p(1).
\end{split}
\end{equation}
Moreover, for any $c > 0$, as $n\to\infty$
\begin{equation}\label{eq:limit-of-M_n}
\sup_{u\in[-2c-1, 2c + 1]}\left|\sqrt{ns_n}M_n\left(x_0 + \frac{u}{s_nh_0}\right) - h_0\left[-\Psi\left(-\frac{2c}{h_0}\right) + \Psi\left(\frac{u}{h_0}\right)\right]\right| \to 0.
\end{equation}
\end{lemma}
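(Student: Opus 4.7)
The plan is to treat the deterministic limit \eqref{eq:limit-of-M_n} first and then derive the three stochastic statements in \eqref{eq:lemma-in-deterministic-part} from it, using localized empirical and quantile process bounds on the shrinking interval $(x_0-\alpha_l,x_0+\alpha_u)$ of $H_n$-mass $4c/s_n$.

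For \eqref{eq:limit-of-M_n}, I would change variables $x=x_0+s/s_n$ in the defining integral of $M_n(x_0+u/(s_nh_0))$ and split at $x_0$: this turns $\sqrt{ns_n}\,M_n(x_0+u/(s_nh_0))$ into $\int_{-s_n\alpha_l}^{u/h_0}\sqrt{n/s_n}[f_{0,n}(x_0+s/s_n)-f_{0,n}(x_0)]\,h_n(x_0+s/s_n)\,ds$. By Lemma~\ref{lem:smoothness-of-H} the lower limit tends to $-2c/h_0$, and the integrand converges pointwise to $\psi(s)\,h_0$ by assumptions~\ref{assump:continuity-of-f_{0,n}} and~\ref{assump:continuous-distribution-of-covariates}. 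Part 3 of Proposition~\ref{prop:assumptions-implications} legitimizes the passage to the limit under the integral (via monotonicity of the integrand together with boundedness on compacta), yielding $h_0\int_{-2c/h_0}^{u/h_0}\psi(s)\,ds=h_0[-\Psi(-2c/h_0)+\Psi(u/h_0)]$. Uniformity on $u\in[-2c-1,2c+1]$ then follows from Dini's theorem: the left-hand side is monotone in $u$ while the right-hand side is continuous (since $\Psi$ is an integral of the monotone $\psi$).

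For the first display in \eqref{eq:lemma-in-deterministic-part} I would integrate by parts to write
\[
\mathbb{M}_n(\zeta)-M_n(\zeta)=\bigl[f_{0,n}(x)-f_{0,n}(x_0)\bigr](\widehat H_n-H_n)(x)\Big|_{x_0-\alpha_l}^{\zeta}-\int_{x_0-\alpha_l}^{\zeta}(\widehat H_n-H_n)(x)\,df_{0,n}(x).
\]
The first factor $\sup_{|x-x_0|\le\alpha_l\vee\alpha_u}|f_{0,n}(x)-f_{0,n}(x_0)|$ is $O(\sqrt{s_n/n})$ by assumption~\ref{assump:continuity-of-f_{0,n}} applied at $c_n=\pm 2c$ (and the total variation of $f_{0,n}$ on the interval is of the same order), while $\sup_{[x_0-\alpha_l,x_0+\alpha_u]}|\widehat H_n-H_n|$ is $O_p(1/\sqrt{ns_n})$ by Bernstein's inequality (the variance of $\widehat H_n(x)-H_n(x)$ is at most $4c/(ns_n)$) combined with a chaining argument over $O(\log n)$ dyadic levels. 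This gives $\sup_\zeta|\mathbb{M}_n-M_n|=O_p(1/n)$; since $n/\gamma_n=O_p(s_n)$ by Lemma~\ref{lem:smoothness-of-H}, the prefactor $\sqrt{n/s_n}\,(n/\gamma_n)$ is $O_p(\sqrt{ns_n})$, so the product is $O_p(\sqrt{s_n/n})=o_p(1)$. The remaining two displays both reduce to uniform order-statistic bounds. Conditionally on $\gamma_n$, the variables $V_k:=[H_n(X'_{(k)})-H_n(x_0-\alpha_l)]/(4c/s_n)$ are the order statistics of $\gamma_n$ i.i.d.\ $\mathrm{Unif}(0,1)$'s, and classical bounds give $\max_k(V_{k+1}-V_k)=O_p(\log\gamma_n/\gamma_n)$ and $\sup_k|V_k-k/\gamma_n|=O_p(\sqrt{(\log\gamma_n)/\gamma_n})$. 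Taylor-expanding $H_n^{-1}$ around $H_n(x_0)$ (valid by~\ref{assump:continuous-distribution-of-covariates}) converts these into $\max_k(X'_{(k+1)}-X'_{(k)})=O_p(\log n/n)$ and $\sup_k|s_n(X'_{(k)}-x_0)-(2c/h_0)(2k/\gamma_n-1)|=O_p(\sqrt{s_n/n})$. For the second display, $|M_n(X'_{(k+1)})-M_n(X'_{(k)})|\le O(\sqrt{s_n/n})\cdot O_p(\log n/n)$, which after the prefactor $O_p(\sqrt{ns_n})$ is $O_p(s_n\log n/n)=o_p(1)$. For the third display, the discretization induced by $\lfloor\gamma_n t/(2cD_0)\rfloor$ contributes a deterministic error of size $O_p(1/\gamma_n)=O_p(s_n/n)$, while the quantile fluctuation contributes $O_p(\sqrt{s_n/n})$; scaling by $\sqrt{n/s_n}$ yields the claimed $O_p(1)$ bound.

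I anticipate the main technical hurdle to be establishing the two localized rates, $\sup|\widehat H_n-H_n|=O_p(1/\sqrt{ns_n})$ and $\sup_k|V_k-k/\gamma_n|=O_p(\sqrt{s_n/n})$, on the shrinking interval $(x_0-\alpha_l,x_0+\alpha_u)$ in the triangular-array regime. Both are essentially second-moment calculations combined with chaining or maximal inequalities and require no structural assumption beyond~\ref{assump:continuous-distribution-of-covariates}, but the bookkeeping of $n$-dependent intervals, masses, and distributions needs care.
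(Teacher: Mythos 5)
Your argument is sound and reaches the same intermediate rates as the paper, but your treatment of the first display is genuinely different. The paper controls $\sup_\zeta|\mathbb{M}_n(\zeta)-M_n(\zeta)|$ by symmetrization, the contraction principle (Theorem 3.1.17 of \cite{gine2021mathematical}) applied to the uniformly bounded monotone multiplier $r_n(x)=\sqrt{n/s_n}(f_{0,n}(x)-f_{0,n}(x_0))$, and the VC maximal inequality of \cite{van1996weak} for the class of intervals, yielding $O_p(1/n)$. Your integration-by-parts decomposition replaces the contraction step by the elementary bound $\int|(\widehat H_n-H_n)|\,df_{0,n}\le \sup|(\widehat H_n-H_n)|\cdot(f_{0,n}(x_0+\alpha_u)-f_{0,n}(x_0-\alpha_l))$ together with a localized $O_p(1/\sqrt{ns_n})$ bound on the centered empirical measure of subintervals of $(x_0-\alpha_l,x_0+\alpha_u)$; the paper derives that same localized rate from the VC inequality when proving the third display. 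Both routes give $O_p(1/n)$, and the bookkeeping with the prefactor $\sqrt{n/s_n}\,(n/\gamma_n)=O_p(\sqrt{ns_n})$ is identical; yours is arguably more elementary, at the price of having to track that the boundary and increment terms involve the localized difference $(\widehat H_n-H_n)((x_0-\alpha_l,x])$, whose variance is $O(1/(ns_n))$, rather than the global $\widehat H_n(x)-H_n(x)$. For the remaining claims your plan coincides with the paper's: uniform spacings of order $\log\gamma_n/\gamma_n$ for the second display (the paper uses the exponential representation of uniform order statistics) and a quantile-process bound plus the discretization error $O(1/\gamma_n)$ for the third.

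Two small slips should be repaired. First, for the third display you quote $\sup_k|V_k-k/\gamma_n|=O_p(\sqrt{(\log\gamma_n)/\gamma_n})$; carried through, this leaves a spurious $O_p(\sqrt{\log\gamma_n})$ rather than the required $O_p(1)$. The Dvoretzky--Kiefer--Wolfowitz inequality gives the log-free rate $O_p(\gamma_n^{-1/2})$, which is what your final line implicitly uses and what you need. Second, in the uniformity step for \eqref{eq:limit-of-M_n}, the map $u\mapsto\sqrt{ns_n}\,M_n(x_0+u/(s_nh_0))$ is not monotone on $[-2c-1,2c+1]$: the integrand $f_{0,n}(x)-f_{0,n}(x_0)$ changes sign at $x_0$, so the map is decreasing for $u\le0$ and increasing for $u\ge0$. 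The pointwise-to-uniform upgrade (a P\'olya-type argument, not Dini's theorem, which concerns monotonicity in $n$) must therefore be applied separately on the two halves, exactly as the paper does via its unimodality observation. Finally, your second display ends with $O_p(s_n\log n/n)$; the bound should be stated with $\log\gamma_n$, since $s_n\log\gamma_n/n\to0$ follows from $s_n/n\to0$ and $\gamma_n\asymp n/s_n$, whereas $s_n\log n/n\to0$ is not automatic.
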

\begin{proof}
For the proof of the first limit in~\eqref{eq:lemma-in-deterministic-part}, note that
\begin{align*}
\sup_{-\alpha_l < \zeta - x_0 < \alpha_u}|\mathbb{M}_n(\zeta) - M_n(\zeta)| \le \sup_{-\alpha_l < s < \alpha_u}\left|\int_{x_0 - \alpha_l}^{x_0 + s} [f_{0,n}(x) - f_{0,n}(x_0)]d(\widehat{H}_n - H_n)(x)\right|.
\end{align*}
Also, observe that for all $x\in(x_0 - \alpha_l, x_0 + \alpha_u)$,
\[
f_{0,n}(x_0 - \alpha_l) - f_{0,n}(x_0) \le f_{0,n}(x) - f_{0,n}(x_0) \le f_{0,n}(x_0 + \alpha_u) - f_{0,n}(x_0).
\]
This implies that
\begin{align*}
&\sup_{-\alpha_l < x - x_0 < \alpha_u}\sqrt{\frac{n}{s_n}}|f_{0,n}(x) - f_{0,n}(x_0)|\\
&\quad\le \sqrt{\frac{n}{s_n}}|f_{0,n}(x_0 - \alpha_l) - f_{0,n}(x_0)| + \sqrt{\frac{n}{s_n}}|f_{0,n}(x_0 + \alpha_u) - f_{0,n}(x_0)|,
\end{align*}
and by assumption~\ref{assump:continuity-of-f_{0,n}} (combined with Lemma~\ref{lem:smoothness-of-H}), the right hand side converges to $2\psi(2c/h_0)$ as $n\to\infty.$ Hence, there exists an $N \ge 1$ such that for all $n \ge N$, we have
\begin{equation}\label{eq:bound-for-f_{0n}}
\sqrt{\frac{n}{s_n}}\sup_{x_0 - \alpha_l < x < x_0 + \alpha_u}|f_{0,n}(x) - f_{0,n}(x_0)| ~\le~ (2\psi(2c/h_0) + 1).
\end{equation}
Define $r_n(x) = \sqrt{n/s_n}(f_{0,n}(x) - f_{0,n}(x_0))$. Under assumption~\ref{assump:data-model-assumption}, it is clear that $r_n(\cdot)$ is a non-decreasing function and we also have $|r_n(x)| \le (2\psi(2c/h_0) + 1)$ for all $x\in(x_0 - \alpha_l, x_0 + \alpha_u)$ and $n\ge N$. Note that
\begin{align*}
&\mathbb{E}\left[\sqrt{\frac{n}{s_n}}\sup_{-\alpha_l < \zeta - x_0 < \alpha_u} |\mathbb{M}_n(\zeta) - M_n(\zeta)|\right]\\ &\le \mathbb{E}\left[\sup_{-\alpha_l < s < \alpha_u}\left|\int_{x_0 - \alpha_l}^{x_0 + s} r_n(x)d(\widehat{H}_n - H_n)(x)\right|\right]\\
&\le \frac{2}{\sqrt{n}}\mathbb{E}\left[\sup_{-\alpha_l < s < \alpha_u}\left|\frac{1}{\sqrt{n}}\sum_{i=1}^n \varepsilon_ir_n(X_i)\mathbf{1}\{x_0 - \alpha_l < X_i \le x_0 + s\}\right|\right],
\end{align*}
where for independent Rademacher variables $\varepsilon_i, 1\le i\le n$ (i.e., $\mathbb{P}(\varepsilon_i = -1) = \mathbb{P}(\varepsilon_i = 1) = 1/2$), the last inequality follows from the symmetrization inequality~\cite[Lemma 2.3.6]{van1996weak}. Now, we deal with the right-hand side by first conditioning on $X_1, \ldots, X_n$. Theorem 3.1.17 of~\cite{gine2021mathematical} implies that
\begin{align*}
&\mathbb{E}\left[\sup_{-\alpha_l < s < \alpha_u}\left|\frac{1}{\sqrt{n}}\sum_{i=1}^n \varepsilon_ir_n(X_i)\mathbf{1}\{x_0 - \alpha_l < X_i \le x_0 + s\}\right|\big|\mathcal{X}_n\right]\\
&\quad\le (2\psi(2c/h_0) + 1)\mathbb{E}\left[\sup_{-\alpha_l < s < \alpha_u}\left|\frac{1}{\sqrt{n}}\sum_{i=1}^n \varepsilon_i\mathbf{1}\{x_0 - \alpha_l < X_i \le x_0 + s\}\right|\big|\mathcal{X}_n\right].
\end{align*}
Hence, by the symmetrization inequality,
\begin{align*}
&\mathbb{E}\left[\sqrt{\frac{n}{s_n}}\sup_{-\alpha_l < \zeta - x_0 < \alpha_u}|\mathbb{M}_n(\zeta) - M_n(\zeta)|\right]\\
&\quad\le 4(2\psi(2c/h_0) + 1)\mathbb{E}\left[\sup_{-\alpha_l < s < \alpha_u}\int_{-\infty}^{\infty}\mathbf{1}\{-\alpha_l < x - x_0 < s\}d(\widehat{H}_n - H_n)(x)\right].
\end{align*}
The class of intervals $\{[x_0 - \alpha_l, x_0 + s]:\, -\alpha_l < s < \alpha_u\}$ is a VC class with VC dimension 2; see Section 2.2 of~\url{http://maxim.ece.illinois.edu/teaching/fall14/notes/VC.pdf}. This implies that the class of functions $\{x\mapsto \mathbf{1}\{x\in[x_0 - \alpha_l, x_0 + s]:\, -\alpha_l < s < \alpha_u\}\}$ is a VC class of VC dimension 2 with the envelope function $F(x) = \mathbf{1}\{x\in[x_0 - \alpha_l, x_0 + \alpha_u]\}$. Therefore, Theorem 2.14.1 of~\cite{van1996weak} implies the existence of a universal constant $\mathfrak{C} \in (0, \infty)$ such that
\begin{align*}
\mathbb{E}\left[\frac{n}{s_n^{1/2}}\sup_{-\alpha_l < \zeta - x_0 < \alpha_u}|\mathbb{M}_n(\zeta) - M_n(\zeta)|\right] \le 4\mathfrak{C}(2\psi(2c/h_0) + 1)(H_n(x_0 + \alpha_u) - H_n(x_0 - \alpha_l))^{1/2},
\end{align*}
which by definition of $\alpha_l, \alpha_u$ is equal to $4\mathfrak{C}(2\psi(2c/h_0) + 1)\sqrt{2c/s_n}$. Hence, for any $c > 0$, as $n\to\infty$,
\begin{align*}
\sqrt{\frac{n}{s_n}}\frac{n}{\gamma_n}\sup_{-\alpha_l < \zeta - x_0 < \alpha_u}|\mathbb{M}_n(\zeta) - M_n(\zeta)| = O_p\left(\sqrt{\frac{n}{s_n}}\frac{2cD_0n}{\gamma_n}\frac{4\mathfrak{C}(2\psi(2c/h_0) + 1)\sqrt{2c}}{n}\right) = o_p(1),
\end{align*}
because $\gamma_n$ scales like $n/s_n$ and $\gamma_n \to \infty$ as $n\to\infty$ (which follows from the assumption that $s_n/n\to0$ as $n\to\infty$).
Now to prove the second limit in~\eqref{eq:lemma-in-deterministic-part}, note that from~\eqref{eq:bound-for-f_{0n}} for $n > N$,
\[
|M_n(X_{(k+1)}') - M_n(X_{(k)}')| \le \sqrt{\frac{s_n}{n}}(2\psi(2c/h_0) + 1)(H_n(X_{(k+1)}') - H_n(X_{(k)}')).
\]
Furthermore, observe that we can write $X_1, \ldots, X_n$ as $X_i = H_n^{-1}(U_i)$ for standard uniform random variables $U_1, \ldots, U_n$ and hence write $H_n(X_{(l)}') = U_{(l)}'$ for the subset of uniform order statistics $U_{(1)} \le U_{(2)} \le \cdots \le U_{(n)}$ belonging to the interval $(H_n(x_0 - \alpha_l), H_n(x_0 + \alpha_u))$.
It is also well-known that
\begin{equation}\label{eq:represetation-uniform-order-statistics}
(U_{(1)}, \ldots, U_{(n)}) ~\overset{d}{=}~ \left(\frac{W_1}{\sum_{j=1}^{n+1}W_j}, \cdots, \frac{W_1 + \cdots + W_n}{\sum_{j=1}^{n+1}W_j}\right),
\end{equation}
for independent standard exponential random variables $W_1, \ldots, W_n$. Combining these facts, we obtain
\begin{align*}
&\sup_{1\le k\le \gamma_n - 1}\left|M_n(X_{(k+1)}') - M_n(X_{(k)}')\right| \le \sqrt{\frac{s_n}{n}}(2\psi(2c/h_0) + 1)\frac{\max_{2\le l\le \gamma_n} W_l}{\sum_{j=1}^{n+1}W_j},
\end{align*}
where we abused the equality in distribution in~\eqref{eq:represetation-uniform-order-statistics} to mean equality almost surely which can be done without loss of generality by defining uniform order statistics through that relation. Because $\max_{2\le l\le \gamma_n} W_l = O_p(\log\gamma_n)$ and $(n+1)^{-1}\sum_{j=1}^{n+1}W_j = O_p(1)$, we conclude that
\[
\sup_{1\le k\le \gamma_n - 1}\left|M_n(X_{(k+1)}') - M_n(X_{(k)}')\right| = O_p\left(\sqrt{\frac{s_n}{n}}\frac{\log\gamma_n}{n}\right),
\]
and hence, as $n\to\infty,$
\[
\sqrt{\frac{n}{s_n}}\frac{n}{\gamma_n}\sup_{1\le k\le \gamma_n - 1}\left|M_n(X_{(k+1)}') - M_n(X_{(k)}')\right| = o_p(1).
\]
This completes the proof of the second limit of~\eqref{eq:lemma-in-deterministic-part}. Finally, for the last limit of~\eqref{eq:lemma-in-deterministic-part}, note that
\[
\widehat{H}_n(X'_{(\lfloor \gamma_nt/(2cD_0)\rfloor})) - \widehat{H}_n(x_0 - \alpha_l) = \frac{1}{n}\left\lfloor\frac{\gamma_n t}{2cD_0}\right\rfloor.
\]
Moreover, setting $\widehat{H}_n((a, b]) = \widehat{H}_n(b) - \widehat{H}_n(a)$ and similarly for $H_n(\cdot)$, Theorem 2.14.1 of~\cite{van1996weak} implies
\begin{equation}\label{eq:VC-CDF-diff}
\sup_{\zeta\in(x_0 - \alpha_l, x_0 + \alpha_u)}|\widehat{H}_n((x_0 - \alpha_l, \zeta]) - H_n((x_0 - \alpha_l, \zeta])| = O_p\left(\frac{1}{\sqrt{ns_n}}\right).
\end{equation}
(See the proof of Lemma~\ref{lem:study-of-stochastic-part} for a very similar application of Theorem 2.14.1 of~\cite{van1996weak}.) Therefore,
\[
\sqrt{ns_n}\sup_{t\in[0, 2cD_0]}\left|\frac{\gamma_n t}{2cD_0n} - H_n\left((x_0 - \alpha_l, X'_{(\lfloor \gamma_n t/(2cD_0)\rfloor)}]\right)\right| = O_p(1).
\]
Because $X'_{(\lfloor \gamma_nt/(2cD_0)\rfloor)}\in(x_0 - \alpha_l, x_0 + \alpha_u)$ and the density of covariates is uniformly close to $h_0$ on this interval as $n\to\infty$, we get that
\[
\sup_{\zeta\in(x_0 - \alpha_l, x_0 + \alpha_u)}\left|\frac{H_n((x_0 - \alpha_l, \zeta])}{(\zeta - x_0 + \alpha_l)h_0} - 1\right| = o(1),
\]
which further implies that
\[
\sqrt{ns_n}\sup_{t\in[0, 2cD_0]}\left|\frac{\gamma_n t}{2cD_0n} - h_0\left(X'_{(\lfloor \gamma_nt/(2cD_0)\rfloor)} - x_0 + \alpha_l\right)\right| = O_p(1).
\]
Equivalently,
\[
\sqrt{\frac{n}{s_n}}\sup_{t\in[0, 2cD_0]}\left|\frac{s_n\gamma_n t}{2cD_0n} - s_nh_0\left(X'_{(\lfloor \gamma_nt/(2cD_0)\rfloor)} - x_0) - s_n\alpha_l\right)\right| = O_p(1).
\]
From Lemma~\ref{lem:smoothness-of-H}, we know that $s_nh_0\alpha_l = 2c(1 + o(1))$ and from~\eqref{eq:VC-CDF-diff} (with $\zeta = x_0 + \alpha_u$), $s_n\gamma_n/n = 4c(1 + o(1))$ as $n\to\infty$. This implies the final limit of~\eqref{eq:lemma-in-deterministic-part}.
To prove~\eqref{eq:limit-of-M_n}, we first note that it suffices to prove pointwise convergence: as stated in Proposition~\ref{prop:verification-of-A3}, Section~0.1 of~\cite{resnick2008extreme} implies that pointwise convergence of monotone functions to a continuous function is in fact uniform convergence. This implies that pointwise convergence of a sequence of functions that are all unimodal at a fixed point is also uniform convergence. In our case, the functions $M_n(\zeta) = \int_{x_0 - \alpha_l}^\zeta [f_{0,n}(x) - f_{0,n}(x_0)]dH_n(x)$ are decreasing on $(x_0 - \alpha_l, x_0]$ and are increasing on $[x_0, x_0 + \alpha_u)$ for all $n\ge1$. Therefore, to show uniform convergence of $\sqrt{ns_n}M_n(x_0 + u/(s_nh_0))$ to a continuous function, it suffices to study pointwise convergence.
First, consider the case $u \le 0$. Observe that because $f_{0,n}(x) - f_{0,n}(x_0)$ does not change sign on $x\in[x_0-\alpha_l, x_0]$, we get that
\begin{align*}
    M_n\left(x_0 + \frac{u}{s_nh_0}\right) &= \int_{x_0 - \alpha_l}^{x_0 + u/(s_nh_0)} [f_{0,n}(x) - f_{0,n}(x_0)]dH_n(x)\\
    &\in \int_{x_0 - \alpha_l}^{x_0 + u/(s_nh_0)} [f_{0,n}(x) - f_{0,n}(x_0)]dx\times\left[\inf_{x\in(x_0 - \alpha_l, x_0]}h_n(x),\,\sup_{x\in(x_0 - \alpha_l, x_0]}h_n(x)\right].
\end{align*}
Both endpoints of the interval above converge to $h_0$ as $n\to\infty$ by assumption~\ref{assump:continuous-distribution-of-covariates}. To prove the convergence of the integral, note that
\begin{align*}
    \int_{x_0 - \alpha_l}^{x_0 + u/(s_nh_0)} [f_{0,n}(x) - f_{0,n}(x_0)]dx &= \int_{-\alpha_l}^{u/(s_nh_0)} [f_{0,n}(x_0 + x) - f_{0,n}(x_0)]dx\\
    &= \alpha_l\int_{-1}^0 [f_{0,n}(x_0 + x\alpha_l) - f_{0,n}(x_0)]dx\\
    &\quad - \frac{1}{s_n}\int_{u/h_0}^0 [f_{0,n}(x_0 + x/s_n) - f_{0,n}(x_0)]dx.
\end{align*}
By Part 3 of Proposition~\ref{prop:assumptions-implications}, we obtain that as $n\to\infty$,
\begin{align*}
\sqrt{\frac{n}{s_n}}\int_{-1}^0 [f_{0,n}(x_0 + {x\alpha_l}) - f_{0,n}(x_0)]dx &\to -\frac{\Psi(-2c/h_0)}{(2c/h_0)}\\
\sqrt{\frac{n}{s_n}}\int_{u/h_0}^0 [f_{0,n}(x_0 + x/s_n) - f_{0,n}(x_0)]dx &\to -\Psi(u/h_0).
\end{align*}
Therefore, (using $s_n\alpha_l \to 2c/h_0$ as $n\to\infty$), for all $u \le 0$
\begin{equation}\label{eq:negative-u-limit}
\sqrt{ns_n}M_n(x_0 + u/(s_nh_0)) ~\to~ h_0\left[-\Psi(-2c/h_0) + \Psi(u/h_0)\right],\quad\mbox{as}\quad n\to\infty.
\end{equation}
Now, consider the case $u > 0$. Then we note that
\[
M_n(x_0 + u/(s_nh_0)) = M_n(x_0) + \int_{x_0}^{x_0 + u/(s_nh_0)} [f_{0,n}(x) - f_{0,n}(x_0)]dH_n(x).
\]
From the above calculation, we know
\[
\sqrt{ns_n}M_n(x_0) \to -h_0\Psi(-2c/h_0),\quad\mbox{as}\quad n\to\infty.
\]
To analyze the second integral, observe that because $f_{0,n}(x) - f_{0,n}(x_0)$ does not change sign on $x\in[x_0, x_0 + \alpha_u]$,
\begin{align*}
&\int_{x_0}^{x_0 + u/(s_nh_0)} [f_{0,n}(x) - f_{0,n}(x_0)]dH_n(x)\\
&\in \int_{x_0}^{x_0 + u/(s_nh_0)} [f_{0,n}(x) - f_{0,n}(x_0)]dx\times \left[\inf_{x\in[x_0, x_0 + \alpha_u]}h_n(x),\, \sup_{x\in[x_0, x_0 + \alpha_u]}h_n(x)\right].
\end{align*}
Both endpoints of the interval above converge to $h_0$ as $n\to\infty$ by assumption~\ref{assump:continuous-distribution-of-covariates}. To prove the convergence of the integral, note that
\begin{align*}
\sqrt{ns_n}\int_{x_0}^{x_0 + u/(s_nh_0)} [f_{0,n}(x) - f_{0,n}(x_0)]dx &= \sqrt{\frac{n}{s_n}}\int_{0}^{u/h_0} [f_{0,n}(x_0 + x/s_n) - f_{0,n}(x_0)]dx\\ &\to \Psi(u/h_0),\quad\mbox{as}\quad n\to\infty.
\end{align*}
Therefore, for $u > 0$,
\begin{equation}\label{eq:positive-u-limit}
\sqrt{ns_n}M_n(x_0 + u/(s_nh_0)) ~\to~ h_0[-\Psi(-2c/h_0) + \Psi(u/h_0)],\quad\mbox{as}\quad n\to\infty.
\end{equation}
Combining~\eqref{eq:negative-u-limit} and~\eqref{eq:positive-u-limit}, we get that for any $u\in\mathbb{R}$, as $n\to\infty$,
\[
\sqrt{ns_n}M_n(x_0 + u/(s_nh_0)) \to h_0[-\Psi(-2c/h_0) + \Psi(u/h_0)].
\]
This completes the proof.
\end{proof}
\section{Additional proofs for Section~\ref{sec:applications-asymptotic-distribution-of-LSE}}\label{appsec:applications}
\subsection{Verification for Example~\ref{exmp:Wright-1981}}
We verify assumption~\ref{assump:continuity-of-f_{0,n}} from the condition in Example~\ref{exmp:Wright-1981}.
Suppose $\{c_n\}$ is a real sequence such that $c_n\to c\in\mathbb{R}\backslash\{0\}$. Fix any sequence $s_n \ge 0$ with $s_n\to\infty$. Then,
\begin{align*}
\sqrt{\frac{n}{s_n}}\left(f_{0,n}\left(x_0+\frac{c_n}{s_n}\right)-f_{0,n}(x_0)\right)
&=\sqrt{\frac{n}{s_n}}A\left|x_0+\frac{c_n}{s_n}-x_0\right|^{\theta}\mathrm{sign}(c_n)(1+o(1))\\
&=\frac{n^{1/2}}{s_n^{\theta+1/2}}A\cdot |c_n|^{\theta}\mathrm{sign}(c_n)(1+o(1)).
\end{align*}
Because $c_n\to c\neq0$, we get $|c_n|^{\theta}\mathrm{sign}(c_n) \to |c|^{\theta}\mathrm{sign}(c)\neq 0$ as $n\to\infty$. Hence, ensuring that $n^{1/2}/s_n^{\theta + 1/2}$ converges to a positive constant provides the rate of convergence. Without loss of generality, taking the limiting constant to be $1$, we get that $s_n = n^{1/(2\theta + 1)}$ suffices. Thus,
\begin{equation}\label{eq:psi-and-s_n-Wright-app}
\psi(c)=A\cdot\mathrm{sign}(c)|c|^{\theta}\quad\mbox{and}\quad s_n = n^{1/(2\theta+1)}.
\end{equation}
This proves the verification of assumption~\ref{assump:continuity-of-f_{0,n}} for Example~\ref{exmp:Wright-1981}.
\subsection{Details for Example~\ref{exmp:regular-varying-functions}}
\label{appsec:regular-varying-details}
We claim that there exists a sequence $\{s_n\}_{n\ge1}$ such that assumption~\ref{assump:continuity-of-f_{0,n}} holds true with $\psi(c) = A|c|^{\theta}\mathrm{sign}(c)$. Because this $\psi(\cdot)$ is a continuous function, by Proposition~\ref{prop:verification-of-A3}, it suffices to verify~\ref{assump:continuity-of-f_{0,n}} with $c_n = c$ for all $n\ge1$.
For any $c\in\mathbb{R}$, we have
\begin{align*}
\sqrt{\frac{n}{s_n}}\left(f_{0,n}\left(x_0+\frac{c}{s_n}\right)-f_{0,n}(x_0)\right)
&=\sqrt{\frac{n}{s_n}}A\left|x_0+\frac{c}{s_n}-x_0\right|^{\theta}L\left(\frac{|c|}{s_n}\right)\mathrm{sign}(c)(1+o(1))\\
&=\frac{n^{1/2}}{s_n^{\theta+1/2}}A|c|^{\theta}L\left(|c|/{s_n}\right)\mathrm{sign}(c)(1+o(1))\\
&\overset{(a)}{=} \frac{n^{1/2}}{s_n^{\theta+1/2}}A|c|^{\theta}L\left(1/{s_n}\right)\mathrm{sign}(c)(1+o(1)),
\end{align*}
where equality (a) follows from the fact that $L(\cdot)$ is slowly varying at $0$ and $1/s_n\to 0$ as $n\to\infty$: for any fixed $c\neq 0$,
\[
\frac{L(|c|/s_n)}{L(1/s_n)} \to 1.
\]
Therefore, by choosing $s_n$ such that $n^{1/2}L(1/s_n)/s_n^{\theta+1/2} \to 1$ as $n\to\infty$, we can take
\[
\psi(c) = A|c|^{\theta}\mathrm{sign}(c),\quad c\in\mathbb{R}.
\]
This completes the verification of assumption~\ref{assump:continuity-of-f_{0,n}} for Example~\ref{exmp:regular-varying-functions}. As a concrete example for the rate, take $L(x)=\ln\left(|x|^{-1}\right)$. Then, taking $s_n = (\sqrt{n}\ln n/(2\theta+1))^{2/(2\theta+1)}$ satisfies $n^{1/2}L(1/s_n)/s_n^{\theta + 1/2} \to 1$ as $n\to\infty$. \\
\begin{figure}[!h]
    \centering
    \includegraphics[width = \textwidth]{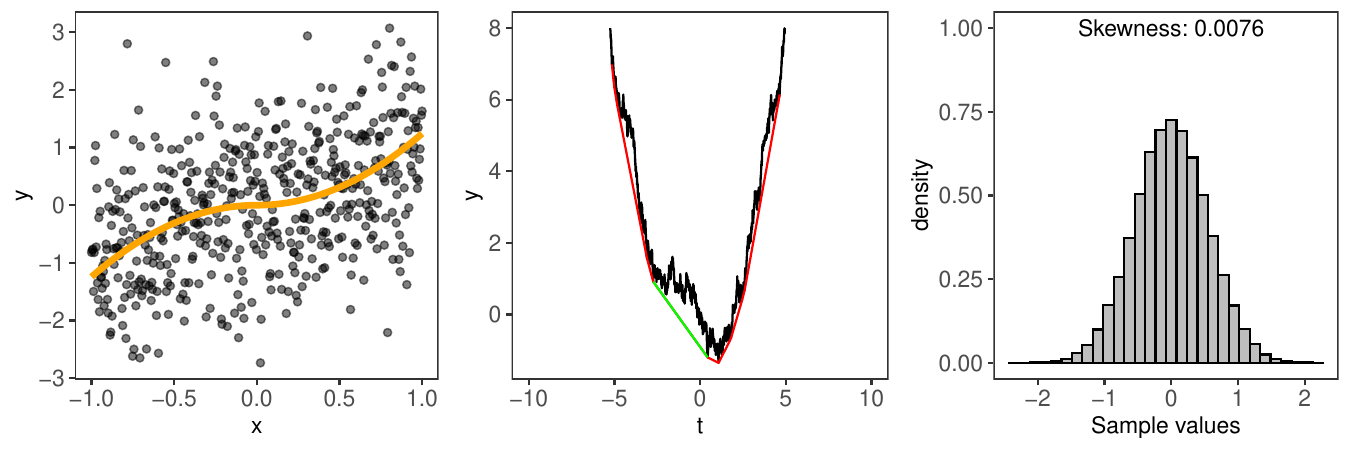}
    \caption{The left panel shows the scatter plot of data from $Y = f_{0,n}(X) + \xi,$ with $n = 500, X\sim\mathrm{Unif}(-1, 1), \xi|X\sim N(0, 1)$ and $f_{0,n}(x) = (\ln{n}/n^2)^{1/5}\psi((\sqrt{n}\ln n/5)^{2/5}X)$ along with the plot of $f_{0,n}(\cdot)$. The middle panel shows the plot of a two-sided Brownian motion with drift $|t|^3/24$ along with the greatest convex minorant; the green line shows the slope from the left of the greatest convex minorant at 0. The right panel shows the histogram of $10^5$ observations from the random variable $\slGCM.$}
    \label{fig:psi_eg2}
\end{figure}
\noindent To illustrate, we present the plot of limiting distribution when $\theta = 2, A = 1$ giving us,
\begin{equation}\label{eq:psi-and-s_n-exmp2}
\psi(c)=|c|^{2}\cdot\mathrm{sign}(c)\quad\mbox{and}\quad s_n = (\sqrt{n}\ln n/5)^{2/5}.
\end{equation}
We generate data using the same process as in Example~\ref{exmp:Wright-1981} in \eqref{eq:data-gen-process}, with $f_{0,n}$ defined as in~\eqref{eq:construction-of-f_0n} for our choice of $(\psi, s_n)$ in~\eqref{eq:psi-and-s_n-exmp2}. In Figure~\ref{fig:psi_eg2}, we present a scatterplot of a sample of size $n = 500$, the plot of $B(t) + |(t/2)|^{3}/3$ along with its greatest convex minorant ($B(\cdot)$ is the two-sided Brownian motion), and a histogram of $10^5$ observations from the random variable $\slGCM$, which represents the limiting distribution from Theorem~\ref{thm:asymptotic-distribution-of-LSE} in this example.
\subsection{Details for Example~\ref{exmp:locally-asymmetric}}
\label{appsec:locally-asymmetric-details}
It is clear that $\psi(\cdot)$ is a continuous function on $\mathbb{R}\setminus\{0\}$ and hence, by Proposition~\ref{prop:verification-of-A3}, it suffices to verify~\ref{assump:continuity-of-f_{0,n}} with $c_n = c \neq 0$ for all $n\ge1$.
Fix any $c\in\mathbb{R}\setminus\{0\}$. If $c > 0$, then
\begin{align*}
\sqrt{\frac{n}{s_n}}\left(f_{0,n}\left(x_0+\frac{c}{s_n}\right)-f_{0,n}(x_0)\right)
&=\sqrt{\frac{n}{s_n}}A_1\left(x_0+\frac{c}{s_n}-x_0\right)^{\theta_1}L_1\left(\frac{c}{s_n}\right)(1+o(1))\\
&=\frac{n^{1/2}}{s_n^{\theta_1+1/2}} A_1 c^{\theta_1}L_1\left(\frac{c}{s_n}\right)(1+o(1))\\
&=\frac{n^{1/2}}{s_n^{\theta_1+1/2}} L_1\left(\frac{1}{s_n}\right)A_1 c^{\theta_1}(1+o(1)),
\end{align*}
where the last equality follows from the assumption that $L_1(\cdot)$ is a slowly varying function at $0$, i.e.,
\[
\frac{L_1(c/s_n)}{L_1(1/s_n)} \to 1 \qquad (n\to\infty).
\]
Similarly, if $c<0$,
\begin{align*}
\sqrt{\frac{n}{s_n}}\left(f_{0,n}\left(x_0+\frac{c}{s_n}\right)-f_{0,n}(x_0)\right)
&=-\sqrt{\frac{n}{s_n}}A_2\left(-x_0-\frac{c}{s_n}+x_0\right)^{\theta_2}L_2(-c/s_n)(1+o(1))\\
&=-\frac{n^{1/2}}{s_n^{\theta_2+1/2}} A_2 (-c)^{\theta_2}L_2\left(\frac{-c}{s_n}\right)(1+o(1))\\
&=-\frac{n^{1/2}}{s_n^{\theta_2+1/2}}L_2\left(\frac{1}{s_n}\right)A_2 (-c)^{\theta_2}(1+o(1)),
\end{align*}
where the last equality follows from the assumption that $L_2(\cdot)$ is slowly varying at $0$. Define $s_n$ based on $\theta_1$ and $\theta_2$ as follows:
\begin{equation}\label{eq:rate-and-s_n-local-asymmetric-app}
\text{Choose $s_n$ s.t. }
\begin{cases}{n^{1/2}L_1(1/s_n)}/{s_n^{\theta_1+1/2}}\to 1, &\text{if }\theta_1\geq\theta_2,\\
{n^{1/2}L_2(1/s_n)}/{s_n^{\theta_2+1/2}}\to 1, &\text{if }\theta_1<\theta_2
\end{cases}
\end{equation}
Note that $s_n$ scales like $n^{1/(2\theta + 1)}$ up to a slowly varying (at $\infty$) factor depending on $n$, with $\theta := \max\{\theta_1, \theta_2\}$. This implies that if $\theta_1 \ge \theta_2$, then $n^{1/2}L_2(1/s_n)/s_n^{\theta_2 + 1/2} \to \infty\mathbf{1}_{\theta_1>\theta_2}+\mathbf{1}_{\theta_1=\theta_2}$. This, in turn, leads to $\psi(\cdot)$ defined in~\eqref{eq:psi-definition-local-asymmetry} as follows: if $\theta_1\ge \theta_2$, then the positive-side limit is finite and equals $A_1c^{\theta}$ for $c>0$, whereas the negative-side limit is $-A_2(-c)^{\theta}$ when $\theta_1=\theta_2$ and diverges to $-\infty$ when $\theta_1>\theta_2$. The case $\theta_1<\theta_2$ is analogous and yields divergence to $+\infty$ for $c>0$ while retaining a finite limit on the negative side. Hence, the pointwise limits are exactly those in \eqref{eq:psi-definition-local-asymmetry}, verifying assumption~\ref{assump:continuity-of-f_{0,n}} for this example.\\
\noindent For a better understanding of $s_n$ and the rate of convergence of isotonic LSE at $x_0$, consider as in the previous example of $L(x)=\ln\left(|x|^{-1}\right)$. We obtain with $\theta := \max\{\theta_1, \theta_2\},$
\begin{align*}
    s_n&=(\sqrt{n}\ln n/(2\theta+1))^{2/(2\theta+1)}\\
    \Rightarrow\quad \sqrt{\frac{n}{s_n}}&=n^{\theta/(2\theta+1)}((2\theta+1)/\ln n)^{1/(2\theta+1)}.
\end{align*}
\begin{figure}[!h]
    \centering
    \includegraphics[width = \textwidth]{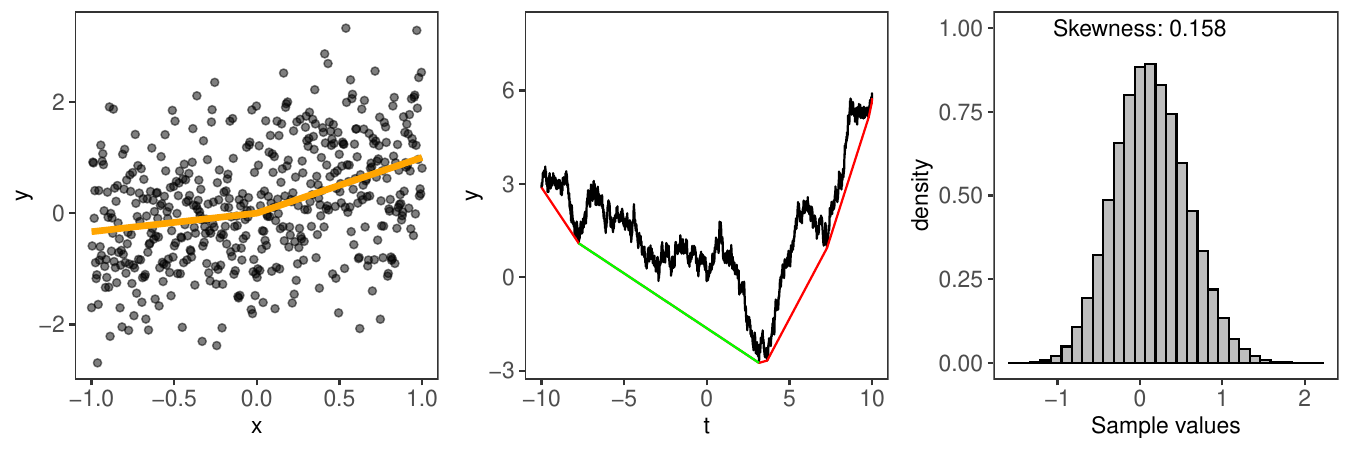}
    \caption{The left panel shows the scatter plot of data from $Y = f_{0,n}(X) + \xi,$ with $n = 500, X\sim\mathrm{Unif}(-1, 1), \xi|X\sim N(0, 1)$ and $f_{0,n}(x) = n^{-1/3}\psi_\text{asym}(n^{1/3}x)$ along with the plot of $f_{0,n}(\cdot)$. The middle panel shows the plot of a two-sided Brownian motion with an asymmetrical drift $\Psi_\text{asym}(t/2)$ along with the greatest convex minorant; the green line shows the slope from the left of the greatest convex minorant at 0. The right panel shows the histogram of $10^5$ observations from the random variable $\slGCM.$ $\psi_\text{asym}$ and $\Psi_\text{asym}$ are defined in~\eqref{eq:psi-and-s_n-exmp3}.}
    \label{fig:psi_eg3}
\end{figure}
To illustrate, we present the plot of limiting distribution when $A_1 = 1, A_2 = 1/3 $. Thus,
\begin{equation}\label{eq:psi-and-s_n-exmp3}
    \psi_{\mathrm{asym}}(t) = \begin{cases}
t & t \geq 0 \\
t/3 & t< 0 \\
\end{cases}, \quad s_n = n^{1/3}
\end{equation}
\[
\Psi_{\mathrm{asym}}(t) = \begin{cases}
t^2/2 & t \geq 0 \\
t^2/6 & t< 0 \\
\end{cases}.
\]
We generate data using the same process as in Example~\ref{exmp:Wright-1981} in~\eqref{eq:data-gen-process}, with $f_{0,n}$ defined as in \eqref{eq:construction-of-f_0n} for our choice of $(\psi_{\mathrm{asym}}, s_n)$ in~\eqref{eq:psi-and-s_n-exmp3}. In Figure~\ref{fig:psi_eg3}, we present a scatterplot of a sample of size $n = 500$, the plot of $B(t) + \Psi_{\mathrm{asym}}(t/2)$ along with its greatest convex minorant ($B(\cdot)$ is the two-sided Brownian motion), and a histogram of $10^5$ observations from the random variable $\slGCM$, which represents the limiting distribution from Theorem~\ref{thm:asymptotic-distribution-of-LSE} in this example.
\subsection{Details for Example~\ref{exmp:near-flat-functions}}
\label{appsec:near-flat-details}
We illustrate this case by generating data using the same process as in Example~\ref{exmp:Wright-1981} in~\eqref{eq:data-gen-process}, with $f_{0,n}$ defined in \eqref{eq:f_0n-exmp4}. Note that, unlike earlier illustrations, $f_{0,n}$ changes with $n$ here.
In Figure~\ref{fig:psi_eg4}, we present a scatterplot of a sample of size $n = 500$, the plot of $B(t) + |t/2|^2/2$ along with its greatest convex minorant ($B(\cdot)$ is the two-sided Brownian motion), and a histogram of $10^5$ observations from the random variable $\slGCM$, which represents the limiting distribution from Theorem~\ref{thm:asymptotic-distribution-of-LSE} in this example.
\begin{figure}[!h]
    \centering
    \includegraphics[width = \textwidth]{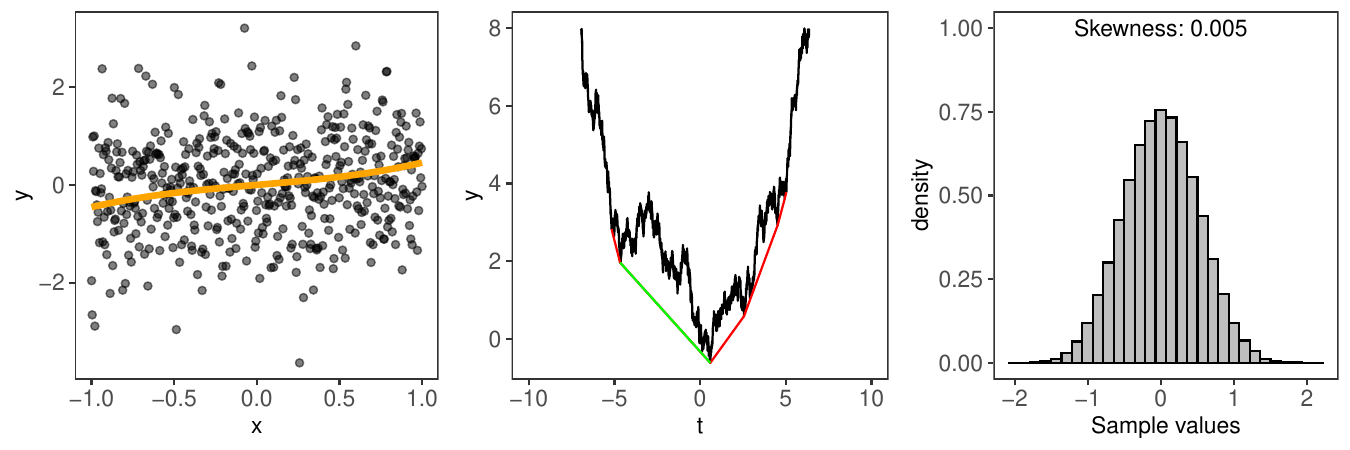}
  \caption{The left panel shows the scatter plot of data from $Y = f_{0,n}(X) + \xi,$ with $n = 500, X\sim\mathrm{Unif}(-1, 1), \xi|X\sim N(0, 1)$ and $f_{0,n}(x) = x/n^{1/5} + x^3/6$ along with the plot of $f_{0,n}(\cdot)$. The middle panel shows the plot of a two-sided Brownian motion with drift $|t|^2/8$ along with the greatest convex minorant; the green line shows the slope from the left of the greatest convex minorant at 0. The right panel shows the histogram of $10^5$ observations from the random variable $\slGCM.$}
    \label{fig:psi_eg4}
\end{figure}
\section{Proof of Corollary \ref{cor:triangular-asymptotic-distribution-of-LSE}}\label{appsec:proof-of-cor-traingular-asymptotic-distribution-of-LSE}
\begin{proof}
This can be shown by noting that, for any $c\neq 0$,
\begin{align*}
    \phi(c) &= \lim_{n\to\infty}\sqrt{\frac{k_n}{s_{k_n}}}\left(f_{0,n}\left(x_0 + \frac{c_n}{s_{k_n}}\right) - f_{0,n}(x_0)\right) \\
    &= \lim_{n\to\infty}\sqrt{\frac{k_n}{n}\cdot\frac{s_n}{s_{k_n}}}\sqrt{\frac{n}{s_{n}}}\left(f_{0,n}\left(x_0 + \frac{c_n}{s_{k_n}}\right) - f_{0,n}(x_0)\right)\\
    &=\sqrt{\frac{\tau_1}{\tau_2}}\lim_{n\to\infty}\sqrt{\frac{n}{s_{n}}}\left(f_{0,n}\left(x_0 + \frac{c_n}{s_{n}}\frac{s_n}{s_{k_n}}\right) - f_{0,n}(x_0)\right)\\
    &=\sqrt{\frac{\tau_1}{\tau_2}}\lim_{n\to\infty}\sqrt{\frac{n}{s_{n}}}\left(f_{0,n}\left(x_0 + \frac{c_n'}{s_{n}}\right) - f_{0,n}(x_0)\right)\\
    &=\sqrt{\frac{\tau_1}{\tau_2}}\psi\left(\frac{c}{\tau_2}\right).
\end{align*}
The last equality follows due to \ref{assump:continuity-of-f_{0,n}} and the fact that $\lim c_n' = c/\tau_2$. Thus,
\begin{align*}
\Phi(c) &= \mathbf{1}\{t> 0\}\int_0^t \phi(s)ds - \mathbf{1}\{t< 0\}\int_t^0 \phi(s)ds \\
&= \sqrt{\tau_1\tau_2}\Psi(t/\tau_2).
\end{align*}
The proof now follows along the same lines as Theorem \ref{thm:asymptotic-distribution-of-LSE} but with sample size $k_n$. To account for this difference in sample size, we have a rescaled drift in the limiting distribution.
\end{proof}
\section{Proof of Proposition~\ref{prop:assumptions-implications}}\label{appsec:proof-of-prop-assumptions-implications}
\begin{enumerate}
\item  From assumption~\ref{assump:continuity-of-f_{0,n}}, we obtain
\begin{equation}\label{eq:psi-fixed-c}
\psi(c) = \lim_{n\to\infty}\, \sqrt{\frac{n}{s_n}}\left(f_{0,n}\left(x_0 + \frac{c}{s_n}\right) - f_{0,n}(x_0)\right),
\end{equation}
and the function $c\mapsto \sqrt{n/s_n}(f_{0,n}(x_0 + c/s_n) - f_{0,n}(x_0))$ is monotone non-decreasing because $f_{0,n}(\cdot)$ is monotone non-decreasing under~\ref{assump:data-model-assumption}. Since the limit of a sequence of monotone functions is monotone, this implies the result. \\\\
For the second part observe that: $\limsup_{|c|\to\infty} |c(\psi(c) - \psi(3c/2))| = \infty$ implies that there exists $c_1<0, c_2>0$ such that $\psi(c_1)<0$ and $\psi(c_2)>0$ (if not, then $\psi$ is identically zero either for all $c\geq 0$ or for all $c\leq 0$ or both. In each of these cases, either $\limsup_{c\to\infty} |c(\psi(c) - \psi(3c/2))|=0$ or $\limsup_{c\to-\infty} |c(\psi(c) - \psi(3c/2))|=0$ or both, which is a contradiction).
\\\\Now using the fact that $\psi$ is non-decreasing,
\[\psi(c)\leq\psi(c_1)<0~~\forall~c\leq c_1~~\text{and}~~\psi(c)\geq\psi(c_2)>0~~\forall~c\geq c_2\]
This shows that $\psi$ is bounded away from zero as $|c|\to\infty$. Also, for $c>c_2$
\begin{align*}
    \Psi(c)&= \int_{0}^c \psi(s)ds\geq\int_{c_2}^c \psi(s)ds\\&\geq\int_{c_2}^c \psi(c_2)ds=(c-c_2)\psi(c_2)>0
\end{align*}
This implies that $\frac{1}{c}\Psi(c)$ is bounded away from zero as $c\to\infty$. The case $c\to-\infty$ can be dealt with similarly.
\item From~\ref{assump:continuity-of-f_{0,n}}, we know that $\tilde{\psi}$ is not identically equal to zero (as observed in the part above) or $\infty$ on $\mathbb{R}\backslash\{0\}$. Thus, $\exists~c_1\neq 0$ s.t. $0<|\tilde{\psi}(c_1)|<\infty$. Suppose $c_1>0$. The case $c_1<0$ is similar. Also, arguing as in part 1, there exists $c_0\in\mathbb{R}\backslash\{0\}$ such that $0<\psi(c_0)$. Fix any such $c_0$. Clearly $c_0>0$. Take any sequence $\{c_n\}$ satisfying $c_n\to c_0$.
We first show that $\limsup_{n\to\infty} s_n/s_n' \in(0,~\infty)$. To that end, first observe that as $s_n,~s_n'\geq 0~\forall~n$, $\limsup_{n\to\infty} s_n/s_n'\in[0, \infty]$. Now, if possible, suppose $\limsup_{n\to\infty} s_n/s_n'=\infty$. This means, $\exists$ a subsequence $\{n_k\}$, such that $\lim_{k\to\infty} s_{n_k}/s_{n_k}'=\infty$. Thus, $\lim_{k\to\infty} s_{n_k}'/s_{n_k}= 0$. Now, as $c_n\to c_0>0$, $\exists~ N_0\in\mathbb{N}$ s.t. $c_n>0~\forall~ n\geq N_0$. Thus, for sufficiently large $k$, $d_k=c_{n_k}s_{n_k}'/s_{n_k}\downarrow 0$. Now as $\tilde{\psi}(c_1)$ is finite, take any sequence $c_n'\to c_1>0$. For large enough $k$,
\begin{align*}
&f_{0,n}\left(x_0+\frac{c_{n_k}s'_{n_k}}{s_{n_k}s'_{n_k}}\right)\leq f_{0,n}\left(x_0+\frac{c'_{n_k}}{s'_{n_k}}\right)\\
&\implies \sqrt{\frac{n_k}{s_{n_k}'}}\left(f_{0,n}\left(x_0+\frac{c_{n_k}s'_{n_k}}{s_{n_k}s'_{n_k}}\right)-f_{0,n}(x_0)\right)\\
&\qquad\leq \sqrt{\frac{n_k}{s_{n_k}'}}\left(f_{0,n}\left(x_0+\frac{c'_{n_k}}{s'_{n_k}}\right)-f_{0,n}(x_0)\right)\\
&\qquad\leq (\tilde{\psi}(c_1)+1) < \infty.
\end{align*}
Hence, for large $k$,
\begin{align*}
    0 < \psi(c_0)&= \lim_{n\to\infty} \sqrt{\frac{n}{s_n}}\left(f_{0,n}\left(x_0+\frac{c_n}{s_n}\right)-f_{0,n}(x_0)\right)\\
    &=\lim_{k\to\infty} \sqrt{\frac{n_k}{s_{n_k}}}\left(f_{0,n}\left(x_0+\frac{c_{n_k}}{s_{n_k}}\right)-f_{0,n}(x_0)\right)\\
    &=\lim_{k\to\infty}\sqrt{\frac{s_{n_k}'}{s_{n_k}}}\sqrt{\frac{n_k}{s_{n_k}'}}\left(f_{0,n}\left(x_0+\frac{c_{n_k}s'_{n_k}}{s_{n_k}s'_{n_k}}\right)-f_{0,n}(x_0)\right)\\
    &\leq 0\times(\tilde{\psi}(c_1)+1) = 0.
\end{align*}
Thus, we get $\psi(c_0)=0$, which is a contradiction since $\psi(c_0)$ was assumed to be non-zero. Now suppose $\limsup_{n\to\infty} s_n/s_n'=0$. Choose $c_0$ such that $\tilde{\psi}(c_0)>0$. Again, we can get a subsequence $\{n_k\}$ such that $\lim_{n\to\infty} s_{n_k}/s_{n_k}'=0$ (of course, this subsequence is different from the previous one but we use the same notation for simplicity). This case is also ruled out using the same argument as above by simply swapping the roles of $s_{n_k}$ and $s_{n_k}'$ in the above calculations to obtain $\tilde{\psi}(c_0)=0$, a contradiction. Note that we needed $\psi$ and $\tilde{\psi}$ to be finite at only some point on either side of the real line.\\\\
Thus, $\limsup_{n\to\infty}s_n/s_n'= a_1 \in (0, \infty)$. As in the previous calculation, we get hold of a subsequence $\{n_k\}$ such that $\lim_{k\to\infty}s_{n_k}/s_{n_k}'= a_1$ and note that
\begin{align*}
    \tilde{\psi}(c_0)&= \lim_{k\to\infty} \sqrt{\frac{n_k}{s'_{n_k}}}\left(f_{0,n}\left(x_0+\frac{c_{n_k}}{s'_{n_k}}\right)-f_{0,n}(x_0)\right)\\
    &=\lim_{k\to\infty}\sqrt{\frac{s_{n_k}}{s'_{n_k}}}\sqrt{\frac{n_k}{s_{n_k}}}\left(f_{0,n}\left(x_0+\frac{c_{n_k}s_{n_k}}{s'_{n_k}s_{n_k}}\right)-f_{0,n}(x_0)\right)\\
    &= \sqrt{a_1}\psi(a_1c_0),\\
    \implies \tilde{\psi}(c_0)&=\sqrt{a_1}\psi(a_1c_0)~~\forall~c_0\in\mathbb{R}.
\end{align*}
Note that the entire argument can be repeated with $\limsup_{n\to\infty} s_n/s_n'$ replaced by $a_0=\liminf_{n\to\infty} s_n/s_n'$. Hence, we conclude that $a_0=\liminf_{n\to\infty} s_n/s_n'\in(0,~\infty)$ and:
\[\tilde{\psi}(c_0)=\sqrt{a_0}\psi(a_0c_0)~~\forall~c_0\in\mathbb{R}\]
Thus, what we get finally is:
\begin{equation}\sqrt{a_1}\psi(a_1c_0)=\sqrt{a_0}\psi(a_0c_0)~~\forall~c_0\in\mathbb{R}.\label{limsup-liminf}
\end{equation}
We will first show that $a_0=a_1$. Suppose not. Suppose $a_1>a_0$. Since $|\psi|$ is not identically equal to zero or $\infty$, consider $c_0'$ such that $0<|\psi(c_0')|< \infty$. Suppose $c_0'>0$. Then, as $\psi$ is non-decreasing, we can get $c_0$ such that $0<\psi(a_0c_0)< \infty$ (take for e.g. $c_0=c_0'/a_0>0$, as $a_0\neq 0$). Now, again as $\psi$ is non-decreasing, and $a_1> a_0$,
\begin{align*}
    a_1c_0> a_0c_0&\implies \psi(a_1c_0)\geq \psi(a_0c_0)\\
    &\implies \sqrt{a_1}\psi(a_1c_0)\geq \sqrt{a_1}\psi(a_0c_0)>\sqrt{a_0}\psi(a_0c_0)
\end{align*}
This is a contradiction to \eqref{limsup-liminf} as equality must be ensured. Thus, we must have $a_0=a_1$. The case $c_0'<0$ can be dealt with in a similar manner. Hence $a=\lim_{n\to\infty} s_n/s_n'$ exists and belongs to $(0,~\infty)$. Thus, we have what we needed to show:
\[\tilde{\psi}(c)=\sqrt{a}\psi(ac)~~\forall~c\in\mathbb{R}\]
 Now for the second part, suppose $a=1$. Then $\tilde{\psi}(c)=\psi(c)$ is trivial. Now suppose $\tilde{\psi}\equiv\psi$. We show that $a = 1$ is the only permissible limit. Suppose, if possible, $a \neq 1$. Take any $c_0 \neq 0$ such that $0<|\tilde{\psi}(c_0)| <\infty$. Suppose $c_0>0$. If $a > 1$, then $ac_0 > c_0\implies \psi(ac_0)\geq\psi(c_0)>0$ and thus, $\tilde{\psi}(c_0)=\sqrt{a}\psi(ac_0)>\psi(c_0)$ which is a contradiction as equality must hold. Similarly, if $a < 1$, then $ac_0 < c_0$, which implies, $\psi(ac_0)\leq\psi(c_0)$ and thus, $\sqrt{a}\psi(ac_0)<\psi(c_0)$ which is again a contradiction. Note that for $a<1$, in the above step, $\psi(ac_0)$ can be zero and the strict inequality would still be attained. The case with $c_0<0$ is similar. Thus, we have $\lim_{n\to\infty} s_n/s_n'=1$.
\item First note that the result is trivially true if either $c = 0$ or $t = 0$. Fix $t\neq 0$ and $c \neq 0$. Define for notational convenience,
\[
g_n(y) = \sqrt{\frac{n}{s_n}}\left(f_{0,n}\left(x_0+\frac{y}{s_n}\right)-f_{0,n}(x_0)\right).
\]
We want to prove that
\[
\lim_{n\to\infty} \int_0^t cg_n(c_nx)dx = \Psi(ct)\quad\mbox{for}\quad t > 0;~~
\lim_{n\to\infty} \int_t^0 cg_n(c_nx)dx = \Psi(ct)\quad\mbox{for}\quad t < 0,
\]
By a change of variable $s = cx$ in both integrals, they are equivalent to showing
\begin{align*}
\lim_{n\to\infty} \int_0^{ct} g_n\left(\frac{c_n}{c}s\right)ds &= \Psi(ct)\quad\mbox{for}\quad t > 0;\\
\lim_{n\to\infty} \int_{ct}^0 g_n\left(\frac{c_n}{c}s\right)ds &= \Psi(ct)\quad\mbox{for}\quad t < 0.
\end{align*}
Take any sequence $c_{n}\to c$. Thus, $c_nt\to ct$ as $n\to\infty$. This implies that there exists an $N_0 \ge 1$ (that can depend on $\{c_n\}$) such that $\mbox{sign}(c_nx) = \mbox{sign}(cx)$ for all $n\ge N_0$ and all $x\in\mathbb{R}$. Then from~\ref{assump:continuity-of-f_{0,n}}, we know
\[
\lim_{n\to\infty}g_n(c_nt) = \psi(ct),
\]
exists and is finite. This implies that there exists an $N_1 \ge 1$ (that can depend on $\{c_n\}, c, t$) such that for all $n\ge N_1$,
\[
\psi(ct) - 1 \le g_n(c_nt) \le \psi(ct) + 1.
\]
If $c > 0$ and $t > 0$, then for all $x\in[0, t]$ and all $n\ge N_0$, we have $c_n > 0$ and $c_nx \geq 0$. Hence, from the assumption~\ref{assump:data-model-assumption} that $f_{0,n}$ is monotone non-decreasing we conclude
\[
0 \le g_n(c_nx) \le g_n(c_nt) \le \psi(ct) + 1,\quad\mbox{for all}\quad x\in[0, t].
\]
If $c < 0$ and $t > 0$, then for all $x\in[0, t]$ and all $n\ge N_0$, we have $c_n < 0$ and $c_nx < 0$. Hence, from assumption~\ref{assump:data-model-assumption}, we conclude
\[
\psi(ct) - 1 \le g_n(c_nt) \le g_n(c_nx) \le 0,\quad\mbox{for all}\quad x\in[0, t].
\]
Therefore, by the bounded convergence theorem, we get
\[
\lim_{n\to\infty} \int_0^t g_n(c_nx)dx = \int_0^t \lim_{n\to\infty} g_n(c_nx)dx = \int_0^t \psi(cx)dx = \frac{1}{c}\int_0^{ct} \psi(s)ds = \frac{\Psi(ct)}{c}.
\]
Similarly, if $c > 0$ and $t < 0$, then for all $x\in[t, 0]$ and all $n\ge N_0$, we have $c_n > 0$ and $c_nx < 0$. Hence, from assumption~\ref{assump:data-model-assumption}, we conclude
\[
\psi(ct) - 1 \le g_n(c_nt) \le g_n(c_nx) \le 0\quad\mbox{for all}\quad x\in[t, 0].
\]
Again, for $c < 0$ and $t < 0$, and all $x\in[t, 0]$ and all $n\ge N_0$, we have $c_n < 0$ and $c_nx > 0$. Hence, from assumption~\ref{assump:data-model-assumption}, we conclude
\[
0 \le g_n(c_nx) \le g_n(c_nt) \le \psi(ct)+1\quad\mbox{for all}\quad x\in[t, 0].
\]
Therefore, again by the bounded convergence theorem, we get
\[
\lim_{n\to\infty} \int_t^0 g_n(c_nx)dx = \int_t^0 \lim_{n\to\infty} g_n(c_nx)dx = \int_t^0 \psi(cx)dx = \frac{1}{c}\int_{ct}^0 \psi(s)ds = -\frac{\Psi(ct)}{c}.
\]
Thus the limit exists and is finite $\forall~t$.

\item 
First consider $\Gamma_1(\cdot)$. Recall that $\psi(\cdot)$ is a non-decreasing function. Thus, for $c > 0$,
\[
\Gamma_1(c) = \psi(c) - \frac{\Psi(c)}{c} = \frac{1}{c}\int_0^c [\psi(c) - \psi(t)]dt \ge 0.
\]
Moreover, $\Gamma_1(c) \ge \frac{1}{c}\int_0^{2c/3} [\psi(c) - \psi(t)]dt \ge 2(\psi(c) - \psi(2c/3))/3$. Hence,
\[
\limsup_{c\to\infty} c\Gamma_1(c) \ge \limsup_{c\to\infty} (2c/3)(\psi(c) - \psi(2c/3)) = \infty,
\]
by the assumption.
For $c < 0$,
\[
\Gamma_1(c) = -\psi(c) + \frac{\Psi(c)}{c} = -\psi(c) - \frac{1}{c}\int_{c}^0 \psi(t)dt = -\frac{1}{c}\int_c^0 [\psi(t) - \psi(c)]dt \ge 0,
\]
because $\psi(t) \ge \psi(c)$ for all $t\in[c, 0]$. Moreover,
\[
\Gamma_1(c) \ge -\frac{1}{c}\int_{2c/3}^0 [\psi(t) - \psi(c)]dt \ge \frac{2}{3}[\psi(2c/3) - \psi(c)].
\]
Hence,
\[
\limsup_{c\to-\infty} (-c)\Gamma_1(c) \ge \limsup_{c\to-\infty} (-2c/3)[\psi(2c/3) - \psi(c)] = \infty.
\]
Now consider $\Gamma_2(\cdot)$. For $c > 0$,
\[
\Gamma_2(c) = -\psi(c) + \frac{\Psi(2c) - \Psi(c)}{c} = -\psi(c) + \frac{1}{c}\int_c^{2c} \psi(t)dt = \frac{1}{c}\int_c^{2c} [\psi(t) - \psi(c)]dt \ge 0.
\]
Moreover, $\Gamma_2(c) \ge (1/c)\int_{3c/2}^{2c} [\psi(t) - \psi(c)]dt \ge (1/2)[\psi(3c/2) - \psi(c)]$. Hence, the result follows. For $c < 0$, 
\[
\Gamma_2(c) = \psi(c) + \frac{\Psi(2c) - \Psi(c)}{c} = \psi(c) - \frac{1}{c}\int_{2c}^{c} \psi(t)dt = -\frac{1}{c}\int_{2c}^c [\psi(c) - \psi(t)]dt \ge 0.
\]
Moreover, $\Gamma_2(c) \ge (-1/c)\int_{2c}^{3c/2} [\psi(c) - \psi(t)]dt \ge (1/2)(\psi(c) - \psi(3c/2))$. Hence, the result follows.
\end{enumerate}

\section{Proof of Lemma~\ref{lem:unif-validity-under-property}}\label{appsec:proof-of-unif-validity-under-property}
\begin{proof} 
By the definition of the infimum, for each $n\in\N$, $\exists~f_{0,n}\in \mathcal{F}, P_n\in\mathcal{P}(f_{0,n})$ such that:
\begin{align}
\label{eq:3-5-subsequence}
\inf_{f\in\mathcal{F}} \inf_{P\in\mathcal{P}(f)} \Pr_P\left(f(x_0)\in \widehat{\mathrm{CI}}_{n,\alpha}(x_0)\right) 
&\leq \left(1+\frac{1}{n}\right) \Pr_{P_n}\left(f_{0,n}(x_0)\in \widehat{\mathrm{CI}}_{n,\alpha}(x_0)\right)
\end{align}

 As $\{f_{0,n}\}\subseteq \mathcal{F}$, due to property $\mathcal{U}$, for the sequence $\{P_n\in\mathcal{P}(f_{0,n})\}_{n\geq 1}$, using Theorem 2 of~\cite{kuchibhotla2021hulc} and the assumption that $\limsup_{n\to\infty} \Delta_{n,\alpha}=0$, we get that 
\begin{align*}
&\mathbb{P}_{P_n}\left(f_{0,n}(x_0) \notin \widehat{\mathrm{CI}}_{n,\alpha}(x_0)\right) ~\le~ \alpha\left(1 + 2(B_{\alpha}\Delta_{n,\alpha})^2e^{2B_{\alpha}\Delta_{n,\alpha}}\right)\\
\implies \limsup_{n\to\infty} ~&\mathbb{P}_{P_n}\left(f_{0,n}(x_0) \notin \widehat{\mathrm{CI}}_{n,\alpha}(x_0)\right) ~\le~ \limsup_{n\to\infty}\alpha\left(1 + 2(B_{\alpha}\Delta_{n,\alpha})^2e^{2B_{\alpha}\Delta_{n,\alpha}}\right) =\alpha.
\end{align*}  This implies,
\begin{align*}
    &\liminf_{n\to\infty} \Pr_{P_n} \left(f_{0,n}(x_0)\in \widehat{\mathrm{CI}}_{n,\alpha}(x_0)\right)=1-\alpha\\
    &\implies\liminf_{n\to\infty} \left(1+\frac{1}{n}\right) \Pr_{P_n} \left(f_{0,n}(x_0)\in \widehat{\mathrm{CI}}_{n,\alpha}(x_0)\right)=1-\alpha\\
    &\overset{(\text{by \eqref{eq:3-5-subsequence}})}{\implies} \liminf_{n\to\infty} \inf_{f\in\mathcal{F}} \inf_{P\in\mathcal{P}(f)}\Pr_{P} \left(f(x_0)\in \widehat{\mathrm{CI}}_{n,\alpha}(x_0)\right)=1-\alpha.
\end{align*}
\end{proof}

\section{Proof of Proposition~\ref{prop:med-unb-unif-validity}}
\label{appsec:proof-of-med-unb-unif-validity}
\begin{proof}
    Assumption~\ref{assump:locally-bounded} implies that $\psi_{f,\,\rho_f}(\cdot,h),\,0<h\le 1$ are uniformly bounded on bounded sets. Because $f(\cdot)$ is non-decreasing, assumption~\ref{assump:locally-bounded} is equivalent to:
\[
\sup_{f\in\mathcal{F}(x_0)}\ \sup_{0<h\le 1}\ \max\{|\psi_{f,\,\rho_f}(-C,h)|,\,|\psi_{f,\,\rho_f}(C,h)|\}\le \mathfrak{B}_C
\quad~\forall~C\ge 0.
\]
Combined with~\ref{assump:locally-symmetric}, this can be further reduced to $\sup_{f\in\mathcal{F}(x_0)}\sup_{0<h\le 1}|\psi_{f,\,\rho_f}(C,h)| < \infty$ for all $C \ge 0$. Assumption~\ref{assump:locally-symmetric} implies that $f(\cdot)$ is locally anti-symmetric. Suppose $\{f_{0,n}\}_{n\geq 1}$ is any sequence of functions from $\mathcal{F}$. Under~\ref{assump:locally-bounded}, Helly's selection theorem implies that for any subsequence $\{j_n\}_{n\ge1}$ of $\{n\}_{n\ge1}$, there exists a further subsequence $\{k_n\}_{n\ge1}$, a sequence $h_{k_n}\downarrow 0$, and a non-decreasing function $\psi(\cdot)$ such that
    \[
    \psi_{f_{0,k_n},\,\rho_{f_{0,k_n}}}(c,h_{k_n}) \to \psi(c)\quad \text{as }n\to\infty~\forall~c\in\mathbb{R}.
    \]

\noindent Assumption~\ref{assump:locally-symmetric} applied to the sequence $f_{0,k_n}$ now implies that $\psi(-c) = -\psi(c)$ for all $c \ge 0$. This is because, by assumption~\ref{assump:locally-symmetric}, for any fixed $c>0$,
\[
\sup_{f\in\mathcal{F}(x_0)}\left|\limsup_{h\downarrow 0}\,\frac{\psi_{f,\,\rho_f}(c,h)}{\psi_{f,\,\rho_f}(-c,h)}+1\right|=0.
\]
In particular, along the sequence $h_{k_n}\downarrow 0$ there exists a further subsequence (not relabeled) such that
\[
\left|\frac{\psi_{f_{0,k_n},\,\rho_{f_{0,k_n}}}(c,h_{k_n})}{\psi_{f_{0,k_n},\,\rho_{f_{0,k_n}}}(-c,h_{k_n})}+1\right|\to 0.
\]
Therefore, for any $c>0$ at which $\psi$ is continuous at $\pm c$,
\[
\left|\frac{\psi(c)}{\psi(-c)}+1\right|
=\lim_{n\to\infty}\left|\frac{\psi_{f_{0,k_n},\,\rho_{f_{0,k_n}}}(c,h_{k_n})}{\psi_{f_{0,k_n},\,\rho_{f_{0,k_n}}}(-c,h_{k_n})}+1\right|
=0,
\]
and hence $\psi(-c)=-\psi(c)$. Since $\psi$ is monotone, it is a.s. continuous and therefore $\psi(-c)=-\psi(c)$ for all $c\ge 0$. Next, we use assumption~\ref{assump:tail-nonconstancy} to verify the required non-constancy in assumption~\ref{assump:continuity-of-f_{0,n}} of Theorem~\ref{thm:asymptotic-distribution-of-LSE}. Fix $M>0$. By~\ref{assump:tail-nonconstancy}, there exists $C_M<\infty$ such that for all $C\ge C_M$,
    \[
    \sup_{f\in\mathcal{F}(x_0)}\ \sup_{0<h\le 1}\ \sup_{|c|>C}
    \frac{1}{\big|c(\psi_{f,\,\rho_f}(3c/2,h)-\psi_{f,\,\rho_f}(c,h))\big|}
    \le \frac{1}{M}.
    \]
    In particular, for any $c$ with $|c|\ge C_M$, taking $C=C_M$ yields
    \[
    \sup_{0<h\le 1}\frac{1}{\big|c(\psi_{f_{0,n},\,\rho_{f_{0,n}}}(3c/2,h)-\psi_{f_{0,n},\,\rho_{f_{0,n}}}(c,h))\big|}
    \le \frac{1}{M},
    \]
    i.e.
    \[
    \big|c(\psi_{f_{0,n},\,\rho_{f_{0,n}}}(3c/2,h)-\psi_{f_{0,n},\,\rho_{f_{0,n}}}(c,h))\big|\ge M
    \quad \text{for all }0<h\le 1.
    \]
    Passing to the subsequence $\{k_n\}$ and then letting $n\to\infty$ gives, for any such $c$ at which $\psi$ is continuous at $c$ and $3c/2$,
    \[
    \big|c(\psi(3c/2)-\psi(c))\big|
    =\lim_{n\to\infty}\big|c(\psi_{f_{0,k_n},\,\rho_{f_{0,k_n}}}(3c/2,h_{k_n})-\psi_{f_{0,k_n},\,\rho_{f_{0,k_n}}}(c,h_{k_n}))\big|
    \ge M.
    \]
    Since $M>0$ is arbitrary, this implies $\big|c(\psi(3c/2)-\psi(c))\big|\to\infty$ as $|c|\to\infty$, i.e., the non-constancy required by (A3) holds for the subsequential limit drift $\psi(\cdot)$.\\

\noindent Hence, along the subsequence $\{k_n\}_{n\ge1}$, under~\ref{assump:continuous-distribution-of-covariates-uniform},~\ref{assump:data-model-assumption-uniform}, Theorem~\ref{thm:asymptotic-distribution-of-LSE} combined with Theorem~\ref{thm:general-drifted-brownian-motion} implies $\sqrt{k_n/s_{k_n}}(\widehat{f}_{k_n}(x_0) - f_{0,k_n}(x_0))$ converges in distribution to a symmetric distribution. Using Corollary~\ref{cor:triangular-asymptotic-distribution-of-LSE}, the median bias as defined in \eqref{eq:median-bias-definition} goes to zero. Thus, this class satisfies property $\mathcal{U}$.
\end{proof}

\section{Proof of Theorem~\ref{thm:general-drifted-brownian-motion}}\label{appsec:proof-of-thm-general-drifted-brownian-motion}
\begin{lemma}\label{lem:symmetry-switch-relation}
Let $g:I_0\to\mathbb{R}$ be a continuous function and define:
\[\argminp_{x\in I_0}(g(x))=\sup_{x\in I_0}\{g(x)=\min_{y\in I_0} g(y)\}~~;~~\argmaxp_{x\in I_0}(g(x))=\sup_{x\in I_0}\{g(x)=\max_{y\in I_0} g(y)\}\]
Suppose $\slGCM[g](x_0)$ (and $\slLCM[g](x_0)$) denote the slope from the left (left derivative) of the Greatest Convex Minorant (and Least Concave Majorant) of the function $f$ evaluated at the point $x_0$. Then,
\[\slGCM[g](x_0)\leq a\iff \argminp_{x\in I_0}(g(x)-ax)\geq x_0\]
\[\slLCM[g](x_0)< a\iff \argmaxp_{x\in I_0}(g(x)-ax)<x_0\]
\end{lemma}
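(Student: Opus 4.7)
The plan is to recognize this as the classical ``switching relation'' for the greatest convex minorant (see Lemma 3.2 of Groeneboom--Jongbloed (2014)) and to prove the $\slGCM$ statement first; the $\slLCM$ statement will follow by duality. Specifically, because $\slLCM[g](x_0) = -\slGCM[-g](x_0)$ and $\argmax^+_x(g(x)-ax) = \argmin^+_x((-g)(x) - (-a)x)$, applying the first equivalence to the function $-g$ with parameter $-a$ and negating both sides turns ``$\slGCM[-g](x_0)\le -a$ iff $\argmin^+ \ge x_0$'' into ``$\slLCM[g](x_0) < a$ iff $\argmax^+ < x_0$''; the strict inequality in the LCM statement arises precisely from this negation.

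For the $\slGCM$ statement, write $C$ for the GCM of $g$, so $C$ is convex with $C\le g$, and let $s_L,s_R$ denote its left and right derivatives; in particular $s_L$ is non-decreasing and $\partial C(x) = [s_L(x),s_R(x)]$. The proof rests on the following supporting-line correspondence: $x^*\in \argmin_{x\in I_0}(g(x)-ax)$ if and only if $C(x^*)=g(x^*)$ and $a\in[s_L(x^*),s_R(x^*)]$. Indeed, if $x^*$ minimizes $g(\cdot)-a(\cdot)$, then $L(x):=g(x^*)+a(x-x^*)$ satisfies $L\le g$, so by the very definition of the GCM we have $L\le C\le g$, and $L(x^*)=g(x^*)\ge C(x^*)$ forces both $C(x^*)=g(x^*)$ and $a\in\partial C(x^*)$; conversely, $a\in\partial C(x^*)$ and $C(x^*)=g(x^*)$ produce the same affine function $L\le C\le g$ with $L(x^*)=g(x^*)$, giving $g(x)-ax\ge g(x^*)-ax^*$.

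With this correspondence in hand, the reverse direction is immediate: if $x^*:=\argmin^+_{x\in I_0}(g(x)-ax)\ge x_0$, then $s_L(x_0)\le s_L(x^*)\le a$ by monotonicity of $s_L$. For the forward direction, assume $s_L(x_0)\le a$ and suppose toward contradiction that $x^*<x_0$. The correspondence gives $a\le s_R(x^*)$, and monotonicity gives $s_L(y)\le s_L(x_0)\le a$ for all $y\in(x^*,x_0]$, while $s_L(y)\ge s_R(x^*)\ge a$; hence $s_L(y)=s_R(x^*)=a$ on $(x^*,x_0]$, so $C$ is linear with slope $a$ on a maximal interval $[x^*,y^*]$ with $y^*\ge x_0>x^*$. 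Because the right endpoint of a maximal linear piece of a GCM must be a contact point with $g$ (else one could locally push $C$ upward while keeping it $\le g$ and convex), we have $C(y^*)=g(y^*)$, and the correspondence then places $y^*$ in the minimizer set, contradicting $x^* = \argmin^+$. The main obstacle is precisely this ``contact at the right endpoint of a maximal linear piece'' step, together with the degenerate case $s_L(x^*)=s_R(x^*)=a$ with no actual linear extension to the right --- which, however, is ruled out by the same monotonicity inequality $s_L(x_0)\ge s_R(x^*)=a$ combined with the hypothesis $s_L(x_0)\le a$, forcing equality and hence the existence of the non-degenerate linear piece described above.
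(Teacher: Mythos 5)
Your proposal is correct, and it actually does more work than the paper does. For the second equivalence (the $\slLCM$ statement) your duality derivation --- apply the $\slGCM$ statement to $-g$ with parameter $-a$, use $\slLCM[g](x_0)=-\slGCM[-g](x_0)$ and $\argmaxp(g(x)-ax)=\argminp(-g(x)+ax)$, and negate --- is exactly the chain of equivalences in the paper's proof, including the observation that the strict inequality comes from the negation. The difference is in the first equivalence: the paper simply imports it as Lemma 3.2 of Groeneboom--Jongbloed (2014), noting only that continuity implies lower semi-continuity, whereas you give a self-contained proof via the supporting-line correspondence ($x^*$ minimizes $g-a\,\cdot$ iff $C(x^*)=g(x^*)$ and $a\in[s_L(x^*),s_R(x^*)]$) plus monotonicity of the left derivative of the GCM. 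That argument is sound: the correspondence is proved correctly in both directions, the reverse implication is immediate, and in the forward implication the forced identity $s_L(y)=a$ on $(x^*,x_0]$ already yields a non-degenerate linear piece, so your worry about the degenerate case is indeed self-resolving. Two small points you leave implicit: you need the minimum of $g(x)-ax$ to be attained and $\argminp$ to itself be a minimizer, which holds because the lemma is only ever invoked on compact intervals $[-c,c]$; and the ``contact at the right endpoint of a maximal linear piece'' step requires, when $y^*$ is the right endpoint of $I_0$ itself, the separate (true) fact that the GCM of a continuous function on a compact interval touches the function at the endpoints. What your route buys is independence from the cited lemma; what the paper's route buys is brevity and coverage of the general lower semi-continuous setting handled in the reference.
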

\begin{proof}
This lemma follows directly from Chapter 3, Lemma 3.2 in \cite{groeneboom2014nonparametric}. The first equivalence relation is the above-stated lemma itself as $g$ continuous means it is lower semi-continuous. For the second one, observe that:
\begin{align*}
\slLCM[g](x_0)< a &\iff -\slGCM[-g](x_0)< a\\
&\iff \slGCM[-g](x_0)> -a\\
&\iff \text{argmin}_{x\in I_0}^{+}(-g(x)+ax)< x_0
\end{align*}
Note that the last equivalence holds from the same cited lemma above, as $-g$ is lower semi-continuous. Thus,
\begin{align*}
\slLCM[g](x_0)< a 
&\iff \text{argmin}_{x\in I_0}^{+}(-g(x)+ax)< x_0\\
&\iff \text{argmax}_{x\in I_0}^{+}(g(x)-ax)< x_0
\end{align*}
\end{proof}
\begin{lemma}\label{lem:symmetry-uniqueness}
Let 
\[\mathcal{S}(t)=B(t)+d(t)\]
where $\{B(t)\}_{t\in I}$ is a standard two-sided Brownian motion on $I\subseteq\mathbb{R}$ which is $\sigma$-compact and $d$ is a deterministic continuous function. Then the location of the maximum (if exists) is a.s. unique.
\end{lemma}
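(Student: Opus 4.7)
My plan is to reduce the event that the supremum is attained at two distinct points to a countable union of events of the form $\{M_A = M_B\}$ for disjoint deterministic compact sets $A, B$, and then kill each such event by isolating an independent Gaussian increment inside $M_A - M_B$.

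First I would use $\sigma$-compactness to write $I = \bigcup_{n \ge 1} K_n$ with each $K_n$ compact, and let $E$ denote the event that $\mathcal{S}$ attains its supremum at two distinct points of $I$. On $E$, if $t_1 < t_2$ are two maximizers with common value $M$, then $t_1, t_2 \in K_n$ for some $n$, and I can choose rationals $a < b < c < d$ with $t_1 \in [a,b]$ and $t_2 \in [c,d]$. Setting $A := K_n \cap [a,b]$ and $B := K_n \cap [c,d]$, both sets are nonempty and compact, and by the a.s.\ continuity of $\mathcal{S}$, the maxima $M_A := \max_{t\in A} \mathcal{S}(t)$ and $M_B := \max_{t \in B} \mathcal{S}(t)$ are attained and both equal the global maximum $M$. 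Hence
\[
E \;\subseteq\; \bigcup_{n \in \mathbb{N}}\ \bigcup_{a<b<c<d \in \mathbb{Q}} \bigl\{A,B \ne \emptyset\ \text{and}\ M_A = M_B\bigr\},
\]
a countable union.

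Next, for each fixed quadruple $(n,a,b,c,d)$ with $A,B$ nonempty, I would decompose
\begin{align*}
M_A &= B(b) + Z_A, & Z_A &:= \max_{t \in A}\bigl\{B(t)-B(b) + d(t)\bigr\},\\
M_B &= B(c) + Z_B, & Z_B &:= \max_{t \in B}\bigl\{B(t)-B(c) + d(t)\bigr\},
\end{align*}
so that $M_A - M_B = (B(b) - B(c)) + (Z_A - Z_B)$. The random variable $Z_A$ is measurable with respect to $\sigma(B(t) - B(b): t \le b)$, and $Z_B$ is measurable with respect to $\sigma(B(t) - B(c): t \ge c)$. By the independent-increments property of the two-sided Brownian motion applied to the disjoint intervals $(-\infty, b]$, $[b,c]$, and $[c,\infty)$, the increment $B(c) - B(b) \sim N(0, c-b)$ is independent of the pair $(Z_A, Z_B)$, hence of $Z_A - Z_B$. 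Conditioning on $Z_A - Z_B$ therefore gives $M_A - M_B \mid (Z_A - Z_B) \sim N(Z_A - Z_B, c-b)$, so
\[
\mathbb{P}(M_A = M_B) \;=\; \mathbb{E}\bigl[\mathbb{P}(B(c)-B(b) = Z_A - Z_B \mid Z_A - Z_B)\bigr] \;=\; 0.
\]
Summing over the countably many quadruples yields $\mathbb{P}(E) = 0$.

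The main obstacle is the independence claim for $B(c)-B(b)$ from $(Z_A,Z_B)$; this is straightforward once the relevant sigma-algebras are identified with increments over disjoint intervals, but it is the only nontrivial probabilistic input—the rest is bookkeeping from $\sigma$-compactness, continuity of $\mathcal{S}$, and the density of $\mathbb{Q}$.
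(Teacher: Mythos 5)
Your argument is correct, but it takes a genuinely different route from the paper: the paper disposes of this lemma in one line by citing Lemma 2.6 of Kim and Pollard (1990), which gives uniqueness of maximizers for a general class of Gaussian processes with nondegenerate increments, whereas you give a self-contained elementary proof tailored to Brownian motion with a deterministic drift. Your reduction of the "two maximizers" event to a countable union over rational quadruples $a<b<c<d$ (after arranging the $K_n$ to be increasing so that both maximizers lie in a common $K_n$) is sound, and the key step — writing $M_A - M_B = (B(b)-B(c)) + (Z_A - Z_B)$ with $B(c)-B(b)\sim N(0,c-b)$ independent of $(Z_A,Z_B)$ because the three blocks of increments live on the disjoint intervals $(-\infty,b]$, $[b,c]$, $[c,\infty)$ — is exactly the right use of the independent-increments property of the two-sided Wiener process; conditioning then kills each event $\{M_A=M_B\}$ since a nondegenerate Gaussian has no atoms. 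What the citation buys the paper is brevity and immediate generality beyond Brownian motion; what your argument buys is transparency and independence from an external nondegeneracy condition, at the cost of being specific to processes with genuinely independent increments. Both establish the statement as written.
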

\begin{proof} 
This follows from Lemma 2.6 in \cite{kim1990cube}. 
\end{proof}
\begin{proof}[Proof of Theorem~\ref{thm:general-drifted-brownian-motion}]
Let $\mathcal{F}_{t_0}$ and $\mathcal{G}_{-t_0}$ denote the distribution functions of $Y_1=\slGCM[\mathcal{B}_1](t_0)$ and $Y_2=\slLCM[\mathcal{B}_2](-t_0)$ respectively. Let \[Y_1^c=\slGCM_{[-c,~c]}[\mathcal{B}_1](t_0) ~\text{and}~ Y_2^c=\slLCM_{[-c,~c]}[\mathcal{B}_2](-t_0)\] denote the corresponding random variables on the compact set $I=[-c,~c]$. Note that on this compact set, $Y_1^c$ and $Y_2^c$ can be written in terms of an argmin functional as in Lemma \ref{lem:symmetry-switch-relation}. Also, we get $\mathbb{P}(Y_1\neq Y_1^c)\to 0$ and $\mathbb{P}(Y_2\neq Y_2^c)\to 0$ as $c\to\infty$ by slightly modifying Lemma 6.2 in \cite{rao1969estimation}, as we did in the last part of the proof of Theorem \ref{thm:asymptotic-distribution-of-LSE}. Thus,
\begin{align*}
\mathcal{F}_{t_0}(a)&=\mathbb{P}(Y_1\leq a)=\mathbb{P}(Y_1\leq a,~Y_1=Y_1^c)+\mathbb{P}(Y_1\leq a,~Y_1\neq Y_1^c)\\
&=\mathbb{P}(Y_1^c\leq a)+\mathbb{P}(Y_1\leq a,~Y_1\neq Y_1^c)\\
&\overset{\text{(a)}}{=}\mathbb{P}\left(\underset{{u\in I}}{\text{argmin}^{+}}(\mathcal{B}_1(u)-au)\geq t_0\right)+\mathbb{P}(Y_1\leq a,~Y_1\neq Y_1^c)\\
&\overset{\text{(b)}}{=}\mathbb{P}\left(\underset{{u\in I}}{\text{argmin}}(\mathcal{B}_1(u)-au)\geq t_0\right)+\mathbb{P}(Y_1\leq a,~Y_1\neq Y_1^c)\\
&=\mathbb{P}\left(-\underset{{u\in I}}{\text{argmin}}(\mathcal{B}_1(u)-au)\leq -t_0\right)+\mathbb{P}(Y_1\leq a,~Y_1\neq Y_1^c)\\
&\overset{\text{(c)}}{=}\mathbb{P}\left(\underset{{u\in I}}{\text{argmin}}(\mathcal{B}_1(-u)+au)\leq -t_0\right)+\mathbb{P}(Y_1\leq a,~Y_1\neq Y_1^c)
\end{align*}
\begin{align*}
\mathcal{F}_{t_0}(a)
&\overset{\text{(d)}}{=}\mathbb{P}\left(\underset{{u\in I}}{\text{argmax}}(-\mathcal{B}_1(-u)-au)\leq -t_0\right)+\mathbb{P}(Y_1\leq a,~Y_1\neq Y_1^c)\\
&=\mathbb{P}\left(\underset{{u\in I}}{\text{argmax}}(-B_1(-u)-d(-u)-au)\leq -t_0\right)+\mathbb{P}(Y_1\leq a,~Y_1\neq Y_1^c)
\end{align*}
where (a) follows from Lemma~\ref{lem:symmetry-switch-relation}; (b) from Lemma~\ref{lem:symmetry-uniqueness}; (c) since $\text{argmin}_{x\in I_0}f(x)=-\text{argmin}_{x\in I_0}f(-x)$ for $I_0$ symmetric around $0$; and (d) since $\text{argmin}_x f(x)=\text{argmax}_x -f(x)$.
Now, observe that the processes $\{B_1(u)\}_{u\in\mathbb{R}}$ and $\{-B_1(-u)\}_{u\in\mathbb{R}}$ are identically distributed as stochastic processes and they have the same distribution as $\{B_2(u)\}_{u\in\mathbb{R}}$ Thus,
\begin{align*}
\mathcal{F}_{t_0}(a)&=\mathbb{P}\left(\underset{{u\in I}}{\text{argmax}}(B_2(u)-d(-u)-au)\leq -t_0\right)+\mathbb{P}(Y_1\leq a,~Y_1\neq Y_1^c)\\
&\overset{\text{(e)}}{=}\mathbb{P}\left(\underset{{u\in I}}{\text{argmax}}(B_2(u)-d(u)-au)\leq -t_0\right)+\mathbb{P}(Y_1\leq a,~Y_1\neq Y_1^c)\\
&\overset{\text{(f)}}{=}\mathbb{P}\left(\underset{{u\in I}}{\text{argmax}^{+}}(\mathcal{B}_2(u)-au)\leq -t_0\right)+\mathbb{P}(Y_1\leq a,~Y_1\neq Y_1^c)\\
&\overset{\text{(g)}}{=}\mathbb{P}(Y_2^c \leq -t_0)+\mathbb{P}(Y_1\leq a,~Y_1\neq Y_1^c)
\end{align*}
where (e) uses $d(-u)=d(u)$ for all $u\in I$; (f) follows from Lemma~\ref{lem:symmetry-uniqueness}; and (g) follows from Lemmas~\ref{lem:symmetry-switch-relation} and SA-1 in \cite{cattaneo2023bootstrap}.
Now taking $c\to\infty$ on the RHS,
\begin{align*}
\mathcal{F}_{t_0}(a)&=\lim_{c\to\infty} \mathbb{P}(Y_2^c \le -t_0)=\lim_{c\to\infty} [\mathbb{P}(Y_2^c \leq -t_0, Y_2^c=Y_2)+\mathbb{P}(Y_2^c \leq -t_0, Y_2^c\neq Y_2)]\\
&=\lim_{c\to\infty} \mathbb{P}(Y_2 \le -t_0)=\mathbb{P}(Y_2 \le -t_0)\\
&=\mathcal{G}_{-t_0}(a)
\end{align*}
This proves the first part of the result. Now setting $t_0=0$, gives us:
\[\slGCM[\mathcal{B}_1](0)\overset{d}{=}\slLCM[\mathcal{B}_2](0)\]
Also,
\begin{align*}
\slLCM[\mathcal{B}_2](0)&=-\slGCM[-\mathcal{B}_2(t)](0)=-\slGCM[-B_2(t)+d(t)](0)\\
&\overset{d}{=}-\slGCM[B_1(t)+d(t)](0)=-\slGCM[\mathcal{B}_1](0)
\end{align*}
This shows the second part of our result.
\end{proof}

\end{document}